\theoremstyle{plain}
\newtheorem{thm}{Theorem}[section]
\newtheorem{lem}[thm]{Lemma}
\newtheorem{prop}[thm]{Proposition}
\newtheorem{cor}[thm]{Corollary}
\theoremstyle{definition}
\theoremstyle{remark}
\newtheorem{remark}[thm]{Remark}
\numberwithin{equation}{section}
\def\sheafHom{\mathcal{H} \hspace{-1pt} \mathit{om}}
\def\Av{{\rm A} \hspace{-1pt} {\rm v}}
\newcommand{\C}{\mathbb{C}}
\newcommand{\Z}{\mathbb{Z}}
\newcommand{\bbX}{\mathbb X}
\newcommand{\calA}{\mathcal{A}}
\newcommand{\calB}{\mathcal{B}}
\newcommand{\calC}{\mathcal{C}}
\newcommand{\calD}{\mathcal{D}}
\newcommand{\calE}{\mathcal{E}}
\newcommand{\calF}{\mathcal{F}}
\newcommand{\calG}{\mathcal{G}}
\newcommand{\calH}{\mathcal{H}}
\newcommand{\calI}{\mathcal{I}}
\newcommand{\calJ}{\mathcal{J}}
\newcommand{\calK}{\mathcal{K}}
\newcommand{\calL}{\mathcal{L}}
\newcommand{\calM}{\mathcal{M}}
\newcommand{\calO}{\mathcal{O}}
\newcommand{\calP}{\mathcal{P}}
\newcommand{\calR}{\mathcal{R}}
\newcommand{\calS}{\mathcal{S}}
\newcommand{\calT}{\mathcal{T}}
\newcommand{\calX}{\mathcal{X}}
\newcommand{\calY}{\mathcal{Y}}
\newcommand{\calZ}{\mathcal{Z}}
\newcommand{\wcalN}{\widetilde{\mathcal{N}}}
\newcommand{\wfrakg}{\widetilde{\mathfrak{g}}}
\newcommand{\frakn}{\mathfrak{n}}
\newcommand{\frakg}{\mathfrak{g}}
\newcommand{\frakt}{\mathfrak{t}}
\newcommand{\frakb}{\mathfrak{b}}
\newcommand{\frakK}{\mathfrak{K}}
\newcommand{\frakp}{\mathfrak{p}}
\newcommand{\lotimes}{{\stackrel{_L}{\otimes}}}
\newcommand{\rcap}{{\stackrel{_R}{\cap}}}
\newcommand{\Gm}{{\mathbb{G}}_{\mathbf{m}}}
\newcommand{\Hom}{{\rm Hom}}
\newcommand{\Ext}{{\rm Ext}}
\newcommand{\For}{{\rm For}}
\newcommand{\Id}{{\rm Id}}
\newcommand{\Coh}{\mathsf{Coh}}
\newcommand{\QCoh}{\mathsf{QCoh}}
\newcommand{\Mod}{\mathsf{-Mod}}
\newcommand{\qis}{{\rm qis}}
\newcommand{\fg}{{\rm fg}}
\newcommand{\op}{{\rm op}}
\newcommand{\scra}{\mathscr{A}}
\newcommand{\rmi}{\rm{(i)}}
\newcommand{\rmii}{\rm{(ii)}}
\newcommand{\rmiii}{\rm{(iii)}}
\newcommand{\rmiv}{\rm{(iv)}}
\newcommand{\rmv}{\rm{(v)}}
\newcommand{\rmvi}{\rm{(vi)}}
\newcommand{\aff}{{\rm aff}}
\newcommand{\Sym}{{\rm Sym}}
\newcommand{\Kth}{\mathsf{K}}
\newcommand{\bc}{\mathrm{bc}}
\begin{document}

\title{Iwahori--Matsumoto involution and linear Koszul Duality}
%
%
%
%

\begin{abstract}
We use linear Koszul duality, a geometric version of the standard duality between modules over symmetric and exte\-rior algebras studied in \cite{MR,MR2},
to give a geometric realization of the Iwahori--Matsumoto involution of affine Hecke algebras. More generally we prove that linear Koszul duality is compatible with convolution in a general context related to convolution algebras.
\end{abstract}

\author{Ivan Mirkovi\'c}
\address{University of Massachusetts, Amherst, MA.}
\email{mirkovic@math.umass.edu}

\author{Simon Riche}
\address{Clermont Universit{\'e}, Universit{\'e} Blaise Pascal, Laboratoire de Math{\'e}ma\-tiques, BP 10448, F-63000 Clermont-Ferrand. \newline
\indent CNRS, UMR 6620, Laboratoire de Math{\'e}matiques, F-63177 Aubi{\`e}re.}
\email{simon.riche@math.univ-bpclermont.fr}

\maketitle

\section{Introduction}

\subsection{Linear Koszul duality}

In \cite{MR, MR2} we have defined and studied \emph{linear Koszul duality}, a geometric version of the standard Koszul duality between (graded) modules over the symmetric algebra of a vector space $V$ and (graded) modules over the exterior algebra of the dual vector space $V^*$. As an application of this construction (in a particular case), given a vector bundle $A$ over a scheme $Y$ (satisfying a few technical conditions) and subbundles $A_1,A_2 \subset A$ we obtained an equivalence of triangulated categories between certain categories of coherent dg-sheaves on the derived intersections $A_1 \rcap_A A_2$ and $A_1^\bot \rcap_{A^*} A_2^\bot$. (Here $A^*$ is the dual vector bundle, and $A_1^\bot, A_2^\bot \subset A^*$ are the orthogonals to $A_1$ and $A_2$.)


\subsection{Linear Koszul duality of convolution algebras}
\label{ss:intro-notation}

In this paper we continue this study further in a special case related to convolution algebras (in the sense of \cite[\S 8]{CG}): we let $X$ be a smooth and proper complex algebraic variety, $V$ be a complex vector space, and $F \subset E:=V \times X$ be a subbundle. Applying our construction in the case $Y=X \times X$, $A=E \times E$, $A_1=\Delta V \times Y$ (where $\Delta V \subset V^2$ is the diagonal), $A_2=F \times F$ we obtain an equivalence between triangulated categories whose Grothendieck groups are respectively $\Kth^{\Gm}(F \times_V F)$ and $\Kth^{\Gm}(F^\bot \times_{V^*} F^\bot)$ (where $\Gm$ acts by dilatation along the fibers of $E \times E$ and $E^* \times E^*$). In fact we consider this situation more generally in the case $X$ is endowed with an action of a reductive group $G$, and $V$ is a $G$-module, and obtain in this way an isomorphism
\[
\Kth^{G \times \Gm}(F \times_V F) \ \cong \ \Kth^{G \times \Gm}(F^\bot \times_{V^*} F^\bot) \leqno(\star)
\]
where $G$ acts diagonally on $E \times E$ and $E^* \times E^*$.
(These constructions require an extension of the results of \cite{MR2} to the equivariant setting, treated in Section~\ref{sec:LKD}.)

Our main technical result is that this construction is compatible with convolution (even at the categorical level): the derived categories of dg-sheaves on our dg-schemes are endowed with a natural convolution product (which induces the usual convolution product of \cite{CG} at the level of $\Kth$-theory). We prove that our equivalence intertwines these products and sends the unit to the unit.

\subsection{A categorification of Iwahori--Matsumoto duality}
\label{ss:intro-IM}

We apply these results to give a geometric realization of the Iwahori--Matsumoto involution on the (extended) affine Hecke algebra $\calH_{\aff}$ of a semi-simple algebraic group $G$.

The Iwahori--Matsumoto involution of $\calH_{\aff}$ is a certain involution which naturally appears in the study of representations of the reductive $p$-adic group dual to $G$ in the sense of Langlands (see \emph{e.g.}~\cite{BaC, BaM}). This involution has a version for Lusztig's \emph{graded} affine Hecke algebra $\overline{\calH}_\aff$ associated with $\calH_\aff$ (\emph{i.e.}~the associated graded of $\calH_{\aff}$ for a certain filtration, see \cite{LAff}), which has been realized geometrically by S.~Evens and the first author in \cite{EM}. More precisely, $\overline{\calH}_\aff$ is isomorphic to the equivariant Borel--Moore homology of the Steinberg variety $Z$ of $G$ (\cite{LuCus1, LuCus2}), and it is proved in \cite{EM} that the Iwahori--Matsumoto involution is essentially given by a Fourier transform on this homology. 

In this paper we upgrade this geometric realization to the actual affine Hecke algebra $\calH_\aff$. This replaces Borel--Moore homology with $\Kth$-homology, and Fourier transform with linear Koszul duality. (Here we use Kazhdan--Lusztig geometric realization of $\calH_\aff$ via $\Kth$-homology \cite{KL}, see also \cite{CG}.) In the notation of \S\ref{ss:intro-notation} this geometric situation corresponds to the case $X=\calB$ (the flag variety of $G$), $V=\frakg^*$ (the co-adjoint representation), and $F=\wcalN$ (the Springer resolution): then $F \times_V F=Z$, and $F^\bot \times_{V^*} F^\bot$ is the ``extended Steinberg variety'', whose (equivariant) $\Kth$-homology is naturally isomorphic to that of $Z$, so that $(\star)$ indeed induces an automorphism of $\calH_\aff$.

In a sequel we will extend this result to a geometric realization of the Iwahori--Matsumoto involution of double affine Hecke algebras. One can expect that this would correspond to the Koszul duality
for the category $\mathcal{O}$ of the (still undefined) double loop group.


\subsection{The role of derived geometry}

Our ``categorification'' of the Iwahori--Matsumoto involution is a (contravariant) equivalence of categories $\frakK_{\mathrm{IM}}$ between some categories of equivariant coherent (dg-)sheaves on \emph{differential graded versions} of the varieties $Z$ and $\calZ$.
However, the codomain of $\frakK_{\mathrm{IM}}$ is ``not really derived,'' in the sense that it is equivalent to the usual category $\calD^b \Coh^{G \times \Gm}(\calZ)$, see Remark \ref{rk:calZ-derived}. On the other hand, the domain of $\frakK_{\mathrm{IM}}$ is \emph{not} equivalent to $\calD^b \Coh^{G \times \Gm}(Z)$; in fact the main reason for replacing $Z$ by its differential graded version is that the category $\calD^b \Coh^{G \times \Gm}(Z)$ has no obvious convolution product ``categorifying'' the product of $\calH_\aff$.
A similar replacement of $Z$ by its derived version appears in \cite{bezru}. See also Remark \ref{rk:conjecture-bezru} for a more elementary ``categorification'' statement, which however does not mention convolution.

In this paper, for simplicity and since these conditions are satisfied in our main example, we restrict ourselves to complex algebraic varieties endowed with an action of a reductive algebraic group. Using stacks it should be possible to work in a much more general setting; we do not consider this here.

\subsection{Linear Koszul duality and Fourier transform}

The proofs in this paper use compatibility properties of linear Koszul duality with various natural constructions proved in \cite{MR2}. (More precisely, here we need equivariant analogues of these results.) These properties are similar to well-known compatibility properties of the Fourier--Sato transform. We will make this observation precise in \cite{MR3}, showing that linear Koszul duality is related to the Fourier isomorphism in homology considered in \cite{EM} by the Chern character from $\Kth$-homology to (completed) Borel--Moore homology. This will explain the relation between Theorem~\ref{thm:mainthm} and the main result of \cite{EM}. The wish to upgrade Fourier transform to Koszul duality was the starting point of our work.

\subsection{Organization of the paper} 
In Section~\ref{sec:G-equiv} we collect some useful results on derived functors for equivariant dg-sheaves. 
In Section~\ref{sec:LKD} we extend the main results of \cite{MR2} to the equivariant setting. Most of the results in the rest of the paper will be formal consequences of these properties.
In Section~\ref{sec:convolution} we study the behavior of our linear Koszul duality equivalence in the context of convolution algebras. Finally, in Section~\ref{sec:IMinvolution} we prove that the special case of linear Koszul duality considered in \S\ref{ss:intro-IM} provides a geometric realization of the Iwahori--Matsumoto involution.


\subsection{Notation} 

Let $X$ be a complex algebraic variety\footnote{By \emph{complex algebraic variety} we mean a reduced, separated scheme of finite type over $\C$.} endowed with an action of an algebraic group $G$. We denote by $\QCoh^G(X)$, respectively $\Coh^G(X)$ the category of $G$-equivariant quasi-coherent, respectively coherent, sheaves on $X$. If $Y \subseteq X$ is a closed subscheme, we denote by $\Coh_Y^G(X)$ the full subcategory of $\Coh^G(X)$ whose objects are supported set-theoretically on $Y$.  If $\calF$, $\calG$ are sheaves of $\calO_X$-modules, we denote by $\calF \boxplus \calG$ the $\calO_{X^2}$-module $(p_1)^* \calF \oplus (p_2)^* \calG$ on $X^2$, where $p_1, p_2 : X \times X \to X$ are the first and second projections.

We will frequently work with $\Z^2$-graded sheaves $\calM$. The $(i,j)$ component of $\calM$ will be denoted $\calM^i_j$. Here ``$i$'' will be called the cohomological grading, and ``$j$'' will be called the internal grading. We will write $|m|$ for the cohomological degree of a homogeneous local section $m$ of $\calM$. Ordinary sheaves will be considered as $\Z^2$-graded sheaves concentrated in bidegree $(0,0)$. As usual, if $\calM$ is a $\Z^2$-graded sheaf of $\calO_X$-modules, we denote by $\calM^\vee$ the $\Z^2$-graded $\calO_X$-module such that
\[
(\calM^\vee)^i_j \ := \ \sheafHom_{\calO_X}(\calM^{-i}_{-j},\calO_X).
\]
We will denote by $\langle m \rangle$ the shift in the internal grading, such that
\[
(\calM \langle m \rangle)^i_j \ := \ \calM^i_{j-m}.
\]

We will consider $G \times \Gm$-equivariant\footnote{Note that $G$ and $\Gm$ do not play the same role: $G$ acts on the variety, while the $\Gm$-equivariance simply means an extra $\Z$-grading.} sheaves of quasi-coherent $\calO_X$-dg-algebras (as in \cite{MR}). Recall that such an object is a $\Z^2$-graded sheaf of $\calO_X$-dg-algebras, endowed with a differential of bidegree $(1,0)$ of square $0$ which satisfies the Leibniz rule with respect to the cohomological grading, and also endowed with the structure of a $G$-equivariant quasi-coherent sheaf, compatible with all other structures. If $\calA$ is such a dg-algebra, we denote by $\calC(\calA\Mod^G)$ the category of $G \times \Gm$-equivariant quasi-coherent sheaves of $\calO_X$-dg-modules over $\calA$. (Such an object is a $\Z^2$-graded $G$-equivariant quasi-coherent $\calO_X$-module, endowed with a differential of bidegree $(1,0)$ and an action of $\calA$ -- extending the $\calO_X$-action -- which makes it an $\calA$-dg-module, the action map being graded and $G$-equivariant.) We denote by $\calD(\calA\Mod^G)$ the associated derived category, obtained by inverting quasi-isomorphisms.

If $\calF$ is an $\calO_X$-modules (considered as a bimodule where the left and right actions coincide), we denote by $\mathrm{S}_{\calO_X}(\calF)$, respectively $\bigwedge_{\calO_X}(\calF)$, the symmetric, respectively exterior, algebra of $\calF$, \emph{i.e.}~the quotient of the tensor algebra of $\calF$ by the relations $f \otimes g - g \otimes f$, respectively $f \otimes g + g \otimes f$, for $f,g$ local sections of $\calF$. If $\calF$ is a complex of (graded) $\calO_X$-modules, we denote by $\Sym_{\calO_X}(\calF)$ the graded-symmetric algebra of $\calF$, \emph{i.e.}~the quotient of the tensor algebra of $\calF$ by the relations $f \otimes g - (-1)^{|f| \cdot |g|}g \otimes f$ for $f,g$ homogeneous local sections of $\calF$. This algebra is a sheaf of ($\Gm$-equivariant) $\calO_X$-dg-algebras in a natural way.

As in \cite{MR2} we use the general convention that we denote by the same symbol a functor and the induced functor between opposite categories.


\subsection{Acknowledgements} 
This article is a sequel to \cite{MR, MR2}. It was started while both authors were members of the Institute for Advanced Study in Princeton. Part of this work was done while the second author was a student at Paris 6 University, and while he visited the Massachusetts Institute of Technology.

We thank Roman Bezrukavnikov for helpful remarks, and the referee for his careful reading.


I.M.~was supported by NSF grants.
S.R.~was supported by ANR grants No.~ANR-09-JCJC-0102-01 and No.~ANR-10-BLAN-0110.


\section{Functors for $G$-equivariant Quasi-coherent Sheaves}
\label{sec:G-equiv}

This section collects general results on equivariant quasi-coherent sheaves and dg-modules. Some of these results are well known, but we include proofs since we could not find an appropriate reference. Most of our assumptions are probably not necessary, but they are satisfied in the situations where we want to apply these results.

\subsection{Equivariant Grothendieck--Serre duality}
\label{ss:duality-equiv}

Let $X$ be a complex algebraic variety, endowed with an action of a reductive algebraic group $G$. By \cite[Example 2.16]{AB}, there exists an object $\Omega$ in $\calD^b \Coh^G(X)$ whose image under the forgetful functor $\For : \calD^b \Coh^G(X) \to \calD^b \Coh(X)$ is a dualizing complex. We will fix such an object.

We will sometimes make the following additional assumption:
\begin{equation}
\label{eqn:enough-flats}
\begin{array}{c}
\text{For any } \calF \text{ in } \Coh^G(X), \text{ there exists } \calP \text{ in } \Coh^G(X) \\
\text{which is flat over } \calO_X \text{ and a surjection } \calP \twoheadrightarrow \calF \text{ in } \Coh^G(X).
\end{array}
\end{equation}
This assumption is standard in this setting; it is satisfied \emph{e.g.}~if $X$ is normal and quasi-projective (see \cite[Proposition 5.1.26]{CG}), or if $X$ admits an ample family of line bundles in the sense of \cite[Definition 1.5.3]{VV}. Note also that~\eqref{eqn:enough-flats} implies a similar property for \emph{quasi}-coherent sheaves.

Recall that by \cite[Corollary 2.11]{AB} the natural functors
\[
\calD^b \Coh^G(X) \to \calD^b \QCoh^G(X) \quad \text{and} \quad \calD^b \Coh(X) \to \calD^b \QCoh(X)
\]
are both fully faithful. This will allow us not to distinguish between morphisms in these categories.

We denote by $a: G \times X \to X$ the action, and by $p : G \times X \to X$ the projection. Both of these morphisms are flat and affine. Recall the ``averaging functor"
\[
\Av : \left\{
\begin{array}{ccc}
\QCoh(X) & \to & \QCoh^G(X) \\
\calF & \mapsto & a_* p^* \calF
\end{array}
\right. .
\]
This functor is exact, and is right adjoint to the forgetful functor $\For : \QCoh^G(X) \to \QCoh(X)$ which is exact. Hence $\Av$ sends injective objects of $\QCoh(X)$ to injective objects of $\QCoh^G(X)$. From this one easily deduces that there are enough injective objects in $\QCoh^G(X)$, and that every such injective object is a direct summand of an injective object of the form $\Av(\calI)$ for some injective $\calI$ in $\QCoh(X)$.

Recall also that for any $\calF$ in $\Coh^G(X)$ and $\calG$ in $\QCoh^G(X)$, the $\C$-vector space $\Hom_{\calO_X}(\calF,\calG)$ is naturally an algebraic $G$-module, and that we have a canonical isomorphism 
\begin{equation}
\label{eqn:morphisms-equiv}
\Hom_{\QCoh^G(X)}(\calF,\calG) \ \cong \ \bigl( \Hom_{\calO_X}(\calF,\calG) \bigr)^G
\end{equation}
induced by the functor $\For$.
(Here and below, for simplicity we do not write the functor $\For$.)
Now we prove a version of this statement for derived categories, which will simplify our constructions a lot.

\begin{lem}
\label{lem:morphisms-equiv}
For any $\calF,\calG$ in $\calD^b \Coh^G(X)$, the $\C$-vector space $\Hom_{\calD^b \Coh(X)}(\calF,\calG)$ is naturally an algebraic $G$-module. Moreover, $\For$ induces an isomorphism $\Hom_{\calD^b \Coh^G(X)}(\calF,\calG) \ \cong \ \bigl( \Hom_{\calD^b \Coh(X)}(\calF,\calG) \bigr)^G$.
\end{lem}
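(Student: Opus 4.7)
The plan is to reduce to the analogous statement for $\calD^b \QCoh^?$ (using the full faithfulness of the embeddings $\calD^b \Coh^? \hookrightarrow \calD^b \QCoh^?$ already recalled) and then to compute both Hom spaces from a single injective resolution $\calG \to \calI^\bullet$ in $\QCoh^G(X)$ chosen so that each $\calI^n$ is a direct summand of $\Av(\calJ^n)$ with $\calJ^n$ injective in $\QCoh(X)$. Such a resolution exists since $\Av$ preserves injectives (having the exact left adjoint $\For$) and $\QCoh^G(X)$ has enough injectives of this form, by the discussion preceding the lemma.

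With this resolution in hand, each $\Hom_{\calO_X}(\calF, \calI^n)$ is naturally an algebraic $G$-module via the equivariant structure on $\calI^n$, and \eqref{eqn:morphisms-equiv} identifies its $G$-invariants with $\Hom_{\QCoh^G(X)}(\calF, \calI^n)$. Thus $\Hom^\bullet_{\calO_X}(\calF, \calI^\bullet)$ is a complex of algebraic $G$-modules whose $G$-invariants compute $\Hom^\bullet_{\QCoh^G(X)}(\calF, \calI^\bullet)$; in particular the $H^0$ of this complex of invariants is $\Hom_{\calD^b \QCoh^G(X)}(\calF, \calG)$.

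The main technical step will be to show that $\For(\calI^\bullet)$ is a $\Hom_{\calO_X}(\calF, -)$-acyclic resolution of $\calG$ in $\QCoh(X)$, or equivalently that $a_* p^* \calJ = \For(\Av(\calJ))$ carries no higher Ext from any coherent sheaf. Since $a$ is affine and flat, derived adjunction yields $\Ext^i_{\calO_X}(\calF, a_* p^* \calJ) \cong \Ext^i_{\calO_{G \times X}}(a^* \calF, p^* \calJ)$, and I would then exploit the isomorphism $\sigma : G \times X \to G \times X$, $(g,x) \mapsto (g, gx)$, which satisfies $a = p \circ \sigma$, to reduce the vanishing to the fact that the pullback of an injective along a flat affine projection remains $\Hom$-acyclic for coherent sources. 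I expect this acyclicity to be the hardest part of the argument, since injectivity is not generally preserved by flat pullback and some genuine base-change bookkeeping is needed.

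Granting the acyclicity, $H^0 \Hom^\bullet_{\calO_X}(\calF, \calI^\bullet) \cong \Hom_{\calD^b \QCoh(X)}(\calF, \calG)$, and this group inherits an algebraic $G$-module structure from the complex. Since $G$ is reductive over $\C$, the functor of $G$-invariants is exact on algebraic $G$-modules, so it commutes with $H^0$, producing the desired isomorphism $\Hom_{\calD^b \Coh^G(X)}(\calF, \calG) \cong \Hom_{\calD^b \Coh(X)}(\calF, \calG)^G$. A standard naturality argument (comparing two choices via a common refinement and using that the forgetful functors are compatible) shows that the resulting algebraic $G$-module structure is canonical, independent of the chosen resolution.
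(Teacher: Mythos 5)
Your plan is essentially the paper's proof: everything reduces to the acyclicity of the equivariant injectives $\Av(\calI)$ for the functor $\Hom_{\calO_X}(\calF,-)$ with $\calF$ coherent and equivariant (the paper isolates exactly this as Lemma~\ref{lem:ext-vanishing-equiv}), after which isomorphism~\eqref{eqn:morphisms-equiv} and the exactness of $G$-invariants for the reductive group $G$ finish the argument. The one step you leave open, and flag as the hardest, is in fact closed by a second adjunction rather than by any base-change bookkeeping: after using adjunction along the flat affine map $a$ and the equivariance isomorphism $a^*\calF \cong p^*\calF$ (which is the same datum your automorphism $\sigma$ encodes, so there is no need to route through $\sigma$), you arrive at $\Ext^i_{\QCoh(G \times X)}(p^*\calF, p^*\calI)$; adjunction along the flat affine projection $p$ identifies this with $\Ext^i_{\QCoh(X)}(\calF, p_* p^*\calI)$, and $p_* p^* \calI \cong \C[G] \otimes_\C \calI$ is a direct sum of copies of the injective $\calI$, hence injective since $X$ is Noetherian (\cite[Corollary II.7.9]{H}). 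So the sub-claim you defer is true and takes two lines; without it your argument is not complete, but the reduction you set up is the right one. The only organizational difference from the paper is that it first performs a d{\'e}vissage to the case where $\calF,\calG$ are single objects of $\Coh^G(X)$ and then resolves, whereas you work directly with the total Hom complex of a bounded-below equivariant injective resolution of the complex $\calG$; that also works, but you should then say a word about why a bounded-below complex of $\Hom_{\calO_X}(\calF^j,-)$-acyclic (rather than genuinely injective) objects computes morphisms in $\calD^+\QCoh(X)$ -- the d{\'e}vissage sidesteps this small point.
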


\begin{proof}
The construction of the $G$-action is standard, and left to the reader. To prove the isomorphism, by a standard ``d{\'e}vissage'' argument it is enough to prove that if $\calF$ and $\calG$ are in $\Coh^G(X)$ and if $i \geq 0$ the natural morphism
\[
\Ext^i_{\Coh^G(X)}(\calF,\calG) \ \to \ \bigl( \Ext^i_{\Coh(X)}(\calF,\calG) \bigr)^G
\]
is an isomorphism. Now let $\calI$ be an injective resolution of $\calG$ in the abelian category $\QCoh^G(X)$. By Lemma~\ref{lem:ext-vanishing-equiv} below, this complex is acyclic for the functor $\Hom_{\calO_X}(\calF,-)$, hence can be used to compute $\Ext^i_{\Coh(X)}(\calF,\calG)$. Then our claim easily follows from isomorphism~\eqref{eqn:morphisms-equiv} and the fact that the functor of $G$-invariants is exact.
\end{proof}

\begin{lem}
\label{lem:ext-vanishing-equiv}
Let $\calF$ be in $\Coh^G(X)$, and $\calJ$ be an injective object of  $\QCoh^G(X)$. Then for any $j>0$ we have $\Ext^j_{\QCoh(X)}(\calF,\calJ)=0$.
\end{lem}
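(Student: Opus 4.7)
The plan is to reduce to a concrete class of injectives and then push the computation through a pair of adjunctions to a setting where injectivity in $\QCoh(X)$ is transparent. As the first reduction step, it suffices to prove the vanishing when $\calJ = \Av(\calI) = a_* p^* \calI$ for some injective $\calI$ in $\QCoh(X)$: by the discussion preceding the lemma, every injective object of $\QCoh^G(X)$ is a direct summand of such a sheaf, and $\Ext^j_{\QCoh(X)}(\calF,-)$ is additive in the second argument.

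Next I would unfold the Ext via two adjunctions. Both $a$ and $p : G \times X \to X$ are flat and affine, so $a^*, p^*, a_*, p_*$ are all exact. Since $a_*$ and $p_*$ preserve injectives (their left adjoints being exact), the standard adjunctions on $\QCoh$ lift to isomorphisms of derived Hom's. This gives
\[
\Ext^j_{\QCoh(X)}(\calF, a_* p^* \calI) \ \cong \ \Ext^j_{\QCoh(G \times X)}(a^* \calF, p^* \calI).
\]
The $G$-equivariant structure on $\calF$ provides a canonical isomorphism $a^* \calF \cong p^* \calF$ in $\QCoh(G \times X)$; applying the adjunction for $p$ a second time I then obtain
\[
\Ext^j_{\QCoh(X)}(\calF, a_* p^* \calI) \ \cong \ \Ext^j_{\QCoh(X)}(\calF, p_* p^* \calI).
\]

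Finally I would identify $p_* p^* \calI \cong \calO(G) \otimes_\C \calI$: this follows from the projection formula (valid since $p$ is flat affine) together with the fact that $p_* \calO_{G \times X} \cong \calO_X \otimes_\C \calO(G)$ because $G$ is affine. The main step, and the one to be careful about, is to observe that $\calO(G) \otimes_\C \calI$ is still injective in $\QCoh(X)$: as a $\C$-vector space $\calO(G)$ is a direct sum of copies of $\C$, so $\calO(G) \otimes_\C \calI$ is a direct sum of copies of $\calI$, and since $X$ is Noetherian (being of finite type over $\C$) arbitrary direct sums of injective quasi-coherent sheaves remain injective. Therefore $\Ext^j_{\QCoh(X)}(\calF, \calO(G) \otimes_\C \calI)=0$ for $j>0$, which proves the lemma. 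The only non-formal input is the preservation of injectivity under infinite direct sums, which is precisely where the Noetherian hypothesis on $X$ is used.
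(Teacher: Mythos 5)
Your proposal is correct and follows essentially the same route as the paper: reduce to $\calJ = \Av(\calI)$, unfold the $\Ext$ through the adjunctions for the flat affine morphisms $a$ and $p$ using $a^*\calF \cong p^*\calF$, and conclude by observing that $p_*p^*\calI \cong \C[G]\otimes_\C \calI$ is a direct sum of injectives, hence injective on the Noetherian scheme $X$ (the paper cites \cite[Corollary II.7.9]{H} for exactly this last point). The only difference is that you spell out the justifications the paper leaves implicit.
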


\begin{proof}
We can assume that $\calJ=\Av(\calI)$ for some injective object $\calI$ of $\QCoh(X)$. Then we have
\[
\Ext^j_{\QCoh(X)}(\calF,\calJ) = \Ext^j_{\QCoh(X)}(\calF,a_* p^*\calI) \cong \Ext^j_{\QCoh(G \times X)}(a^*\calF,p^*\calI)
\]
by adjunction. As $\calF$ is $G$-equivariant we have an isomorphism $a^* \calF \cong p^* \calF$, and using adjunction again we deduce an isomorphism
\[
\Ext^j_{\QCoh(X)}(\calF,\calJ) \cong \Ext^j_{\QCoh(X)}(\calF,p_* p^*\calJ).
\]
Now we have $p_* p^* \calJ \cong \C[G] \otimes_\C \calJ$, and it follows from \cite[Corollary II.7.9]{H} that $p_* p^* \calJ$ is injective. This finishes the proof.
\end{proof}

As $\Omega$ is a dualizing complex, we have an equivalence
\[
\mathrm{D}_\Omega \ := \ R\sheafHom_{\calO_X}(-,\Omega) : \calD^b \Coh(X) \xrightarrow{\sim} \calD^b \Coh(X)^\op,
\]
and a canonical isomorphism of functors $\varepsilon_\Omega : \Id_{\calD^b \Coh(X)} \to \mathrm{D}_\Omega \circ \mathrm{D}_\Omega$
(see \emph{e.g.}~\cite[\S1.5]{MR2} for details).

Let now $\calI_\Omega$ be a bounded below complex of injective objects of $\QCoh^G(X)$ whose image in the derived category $\calD^+ \QCoh^G(X)$ is $\Omega$. Then the ``internal Hom'' bifunctor defines a functor
\[
{}^0 \mathrm{D}_\Omega^{G} := \sheafHom_{\calO_X}(-,\calI_\Omega) : \calC^b \Coh^G(X) \to \calC^+ \QCoh^G(X)^\op.
\]

\begin{lem}
\label{lem:duality-equiv}
Assume condition~\eqref{eqn:enough-flats} is satisfied.

The functor ${}^0 \mathrm{D}_\Omega^{G}$ is exact. The induced functor on derived categories factors through a functor between bounded derived categories
\[
\mathrm{D}_\Omega^{G} : \calD^b \Coh^G(X) \to \calD^b \Coh^G(X)^\op.
\]
Moreover, the following diagram commutes up to isomorphism:
\[
\xymatrix@C=2cm@R=0.6cm{
\calD^b \Coh^G(X) \ar[r]^-{\mathrm{D}_\Omega^{G}} \ar[d]_-{\For} & \calD^b \Coh^G(X)^\op \ar[d]^-{\For} \\
\calD^b \Coh(X) \ar[r]^-{\mathrm{D}_\Omega} & \calD^b \Coh(X)^\op
}
\]
where vertical arrows are the usual forgetful functors.
\end{lem}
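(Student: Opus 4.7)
The plan is to reduce the entire lemma to a sheafy refinement of Lemma~\ref{lem:ext-vanishing-equiv}, namely: for $\calF$ in $\Coh^G(X)$ and $\calJ$ an injective object of $\QCoh^G(X)$, one has $R^j \sheafHom_{\calO_X}(\calF,\calJ)=0$ for $j>0$. I would prove this by sheafifying the argument of Lemma~\ref{lem:ext-vanishing-equiv}. Reducing as there to the case $\calJ=\Av(\calI)=a_* p^* \calI$ with $\calI$ injective in $\QCoh(X)$, the sheafy adjunction $\sheafHom_{\calO_X}(\calF, a_* \calG) \cong a_* \sheafHom_{\calO_{G\times X}}(a^*\calF, \calG)$ --- valid since $\calF$ is coherent and $a$ is affine --- combined with the equivariance isomorphism $a^*\calF \cong p^*\calF$, the flatness of $p$, and the injectivity of $\calI$ in $\QCoh(X)$, yields $\sheafHom_{\calO_X}(\calF,\calJ)\cong \Av(\sheafHom_{\calO_X}(\calF,\calI))$ as well as the desired vanishing of the higher local Ext's.

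Granting this vanishing, exactness of ${}^0 \mathrm{D}_\Omega^{G}$ is immediate: applying $\sheafHom_{\calO_X}(-,\calI_\Omega^n)$ termwise to a short exact sequence of bounded complexes in $\Coh^G(X)$ produces a short exact sequence of bicomplexes with values in $\QCoh^G(X)$, and totalization preserves short exactness since $\calI_\Omega$ is bounded below. Moreover, the sheafy vanishing shows that the complex $\sheafHom_{\calO_X}(\calF,\calI_\Omega)$ computes $R\sheafHom_{\calO_X}(\calF,\Omega)$ both in $\calD^+\QCoh^G(X)$ (because $\calI_\Omega$ is bounded below and termwise injective in $\QCoh^G(X)$, hence K-injective) and in $\calD^+\QCoh(X)$ (via the standard hyper-Ext convergence for a bounded-below complex of $\sheafHom(\calF',-)$-acyclic objects, where $\calF'$ ranges over coherent equivariant sheaves). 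In particular, ${}^0 \mathrm{D}_\Omega^{G}$ preserves quasi-isomorphisms and descends to a well-defined functor on derived categories.

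The commutativity of the diagram is then automatic at the level of underlying complexes: $\For \bigl( \sheafHom_{\calO_X}(\calF,\calI_\Omega) \bigr) = \sheafHom_{\calO_X}(\For\calF,\For\calI_\Omega)$, and both sides are identified with their respective $R\sheafHom$'s by the previous paragraph. The factorization $\mathrm{D}_\Omega^{G}:\calD^b\Coh^G(X)\to\calD^b\Coh^G(X)^\op$ then follows from the fact that $\For$ reflects both boundedness and coherence of equivariant quasi-coherent sheaves: since $\mathrm{D}_\Omega(\For\calF)$ lies in $\calD^b\Coh(X)$ by the dualizing property of $\Omega$, the same must hold for $\mathrm{D}_\Omega^{G}(\calF)$.

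The hard part will be the sheafy Ext-vanishing of the first paragraph. Whereas Lemma~\ref{lem:ext-vanishing-equiv} is obtained by a short chain of adjunctions for global Hom, its sheafified form requires handling the projection and adjunction formulas for internal Hom under the affine flat morphisms $a$ and $p$ with genuine care; coherence of $\calF$ (equivalently, local finite presentation) and the affineness and flatness of $a,p$ are what make the manipulations go through. Once this sheafy vanishing is established, all three assertions of the lemma follow formally.
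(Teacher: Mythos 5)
Your proposal is correct, and its first half coincides with the paper's own argument: exactness of ${}^0 \mathrm{D}_\Omega^{G}$ is obtained in both cases by reducing to $\calJ=\Av(\calI)$ and running the adjunction chain $\sheafHom_{\calO_X}(\calF,a_*p^*\calI) \cong a_*\sheafHom_{\calO_{G\times X}}(a^*\calF,p^*\calI) \cong \Av \bigl( \sheafHom_{\calO_X}(\calF,\calI) \bigr)$. Where you genuinely diverge is in the commutativity of the diagram. The paper fixes a quasi-isomorphism $\calI_\Omega \to \calJ_\Omega$ into a non-equivariant injective resolution, and shows $\sheafHom_{\calO_X}(\calF,\calK_\Omega)$ is acyclic for the cone $\calK_\Omega$ by replacing $\calF$ with a bounded above complex $\calL$ of flat (hence locally free) objects of $\Coh^G(X)$ --- this is exactly where hypothesis~\eqref{eqn:enough-flats} enters --- and then using exactness of $\sheafHom_{\calO_X}(\calL^n,-)$. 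You instead upgrade Lemma~\ref{lem:ext-vanishing-equiv} to the local statement $\sheafExt^j_{\calO_X}(\calF,\calJ)=0$ for $j>0$, so that the terms of $\calI_\Omega$ (and hence of $\calK_\Omega$) are acyclic for the non-equivariant $\sheafHom_{\calO_X}(\calF,-)$, and conclude by the standard ``bounded-below acyclic complex of acyclic objects'' lemma. Both mechanisms are sound. Your route has the mild advantage of not using~\eqref{eqn:enough-flats} at all in this step (the hypothesis is still assumed, so this is harmless, but it shows the commutativity holds more generally); the price, as you correctly flag, is that the sheafy vanishing rests on flat base change for local $\sheafExt$ along $p$ (valid since $\calF$ is coherent and $X$ is Noetherian, via local finite free resolutions) and on the fact that injective objects of $\QCoh(X)$ on a Noetherian scheme have no higher local Ext against coherent sheaves. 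These are standard but are precisely the delicate points the paper's flat-resolution detour is designed to avoid. The final descent to $\calD^b \Coh^G(X)$ via the fact that $\For$ reflects boundedness and coherence is the same in both arguments.
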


\begin{proof}
To prove exactness, it suffices to prove that if $\calJ$ is an injective object of $\QCoh^G(X)$, the functor
\[
\sheafHom_{\calO_X}(-,\calJ) : \Coh^G(X) \to \QCoh^G(X)^\op
\]
is exact. One can assume that $\calJ=\Av(\calI)$ for some injective $\calI$ in $\QCoh(X)$. Then for any $\calF$ in $\Coh^G(X)$ we have
\[
\sheafHom_{\calO_X}(\calF,\calJ) = \sheafHom_{\calO_X}(\calF,a_* p^* \calI) \cong a_* \sheafHom_{\calO_{G \times X}}(a^*\calF, p^* \calI)
\]
by adjunction. Now we have a canonical isomorphism $a^* \calF \cong p^*\calF$, and we deduce isomorphisms
\[
\sheafHom_{\calO_X}(\calF,\calJ) \cong a_* \sheafHom_{\calO_{G \times X}}(p^*\calF, p^* \calI) \cong a_* p^* \sheafHom_{\calO_{X}}(\calF, \calI) \cong \Av \bigl( \sheafHom_{\calO_{X}}(\calF, \calI) \bigr).
\]
As both the functors $\sheafHom_{\calO_X}(-,\calI)$ and $\Av$ are exact, we deduce the claim, hence exactness of ${}^0 \mathrm{D}_\Omega^{G}$.

Let us denote by ${}' \mathrm{D}_\Omega^{G}$ the functor induced between derived categories, and by ${}' \mathrm{D}_\Omega$ the non-equivariant analogue. Now, let us prove that the following diagram commutes:
\begin{equation}
\label{eqn:diagram-duality-equiv}
\vcenter{
\xymatrix@C=2cm@R=0.6cm{
\calD^b \Coh^G(X) \ar[r]^-{{}' \mathrm{D}_\Omega^{G}} \ar[d]_-{\For} & \calD^+ \QCoh^G(X)^\op \ar[d]^-{\For} \\
\calD^b \Coh(X) \ar[r]^-{{}' \mathrm{D}_\Omega} & \calD^+ \QCoh(X)^\op
}
}
\end{equation}
Let $\calJ_\Omega$ be a complex of injective objects in $\QCoh(X)$ whose image in $\calD^+ \QCoh(X)$ is $\Omega$, so that the functor ${}' \mathrm{D}_\Omega$ is the functor induced by the exact functor $\sheafHom_{\calO_X}(-,\calJ_\Omega) : \calC^b \Coh(X) \to \calC^+ \QCoh(X)^\op$. By standard arguments there exists a quasi-isomorphism $\calI_\Omega \xrightarrow{\qis} \calJ_\Omega$ in the category $\calC^+ \QCoh(X)$. We denote by $\calK_\Omega$ the cone of this morphism. To prove the commutativity it is sufficient to prove that for any $\calF$ in $\calC^b \Coh^G(X)$ the natural morphism
\[
\sheafHom_{\calO_X}(\calF,\calI_\Omega) \to \sheafHom_{\calO_X}(\calF,\calJ_\Omega)
\]
is a quasi-isomorphism, or in other words that the complex $\sheafHom_{\calO_X}(\calF,\calK_\Omega)$ is acyclic. By our assumption~\eqref{eqn:enough-flats}, there exists a complex $\calL$ in $\calC^- \Coh^G(X)$ whose objects are $\calO_X$-flat and a quasi-isomorphism $\calL \xrightarrow{\qis} \calF$. Using what was checked in the first paragraph of this proof, one can show that the induced morphism $\sheafHom_{\calO_X}(\calF,\calK_\Omega) \to\sheafHom_{\calO_X}(\calL,\calK_\Omega)$ is a quasi-isomorphism. Now, as $\calK_\Omega$ is an acyclic complex and $\calL$ a bounded above complex of flat $\calO_X$-modules the complex $\sheafHom_{\calO_X}(\calL,\calK_\Omega)$ is acyclic. This finishes the proof of the commutativity of~\eqref{eqn:diagram-duality-equiv}.

Finally, as the functor ${}' \mathrm{D}_\Omega$ takes values in $\calD^b \Coh(X)$, one deduces the second claim of the lemma and the commutativity of the diagram from the commutativity of~\eqref{eqn:diagram-duality-equiv}.
\end{proof}

\begin{cor}
\label{cor:duality-equiv}
Assume condition~\eqref{eqn:enough-flats} is satisfied.

There exists a canonical isomorphism $\Id \xrightarrow{\sim} \mathrm{D}_\Omega^{G} \circ \mathrm{D}_\Omega^{G}$ of endofunctors of $\calD^b \Coh^G(X)$. In particular,  
$\mathrm{D}_\Omega^{G}$ is an equivalence of categories.
\end{cor}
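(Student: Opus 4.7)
The plan is to construct a natural transformation $\eta^G : \Id \to \mathrm{D}_\Omega^{G} \circ \mathrm{D}_\Omega^{G}$ on $\calD^b \Coh^G(X)$, and then reduce the proof that it is an isomorphism to the already-known non-equivariant case via the forgetful functor $\For$. The ``in particular'' then follows at once: the natural isomorphism $\eta^G$ together with its analogue on the opposite category exhibits $\mathrm{D}_\Omega^{G} : \calD^b \Coh^G(X) \to \calD^b \Coh^G(X)^\op$ as its own quasi-inverse.

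To build $\eta^G$, I would work at the chain level. For every $\calF \in \calC^b \Coh^G(X)$, the classical double-evaluation map
\[
\eta_\calF : \calF \ \to \ \sheafHom_{\calO_X} \bigl( \sheafHom_{\calO_X}(\calF, \calI_\Omega), \calI_\Omega \bigr)
\]
is a morphism of $G$-equivariant $\calO_X$-dg-modules, functorial in $\calF$. Since ${}^0 \mathrm{D}_\Omega^{G}$ is exact and the induced functor on derived categories lands in $\calD^b \Coh^G(X)$ by Lemma \ref{lem:duality-equiv}, this descends to the sought-after natural transformation on $\calD^b \Coh^G(X)$.

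To check that each $\eta^G_\calF$ is an isomorphism, I would exploit that the forgetful functor $\For : \calD^b \Coh^G(X) \to \calD^b \Coh(X)$ is conservative (a morphism is an isomorphism if and only if the cone is zero, and a bounded equivariant complex is zero in the derived category iff its underlying non-equivariant complex is, since this is detected on the cohomology sheaves). Via the commutative square of Lemma \ref{lem:duality-equiv} together with the naturality of the double-evaluation map in the resolution $\calI_\Omega \xrightarrow{\qis} \calJ_\Omega$ used there, $\For(\eta^G_\calF)$ identifies with the non-equivariant bidualization $\varepsilon_\Omega(\For \calF)$, which is an isomorphism since $\Omega$ is dualizing. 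Hence $\eta^G_\calF$ is an isomorphism.

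The main obstacle I expect is the bookkeeping required to match the equivariant double-evaluation built from $\calI_\Omega$ with the non-equivariant one built from $\calJ_\Omega$. This is not a conceptual difficulty: using hypothesis \eqref{eqn:enough-flats} to replace $\calF$ by a bounded above complex of $\calO_X$-flat equivariant coherent sheaves, and then invoking the quasi-isomorphism $\calI_\Omega \xrightarrow{\qis} \calJ_\Omega$, one checks that the two bidualization maps agree up to canonical isomorphism. No new ingredient beyond those already assembled in \S\ref{ss:duality-equiv} is needed.
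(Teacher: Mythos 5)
Your argument is correct, but it takes a different route from the paper's. The paper does not construct anything at the chain level: it simply observes that the non-equivariant bidualization $\varepsilon_\Omega(\calF) : \calF \xrightarrow{\sim} \mathrm{D}_\Omega \circ \mathrm{D}_\Omega(\calF)$, being canonical, is a $G$-invariant element of $\Hom_{\calD^b \Coh(X)}(\calF, \mathrm{D}_\Omega \circ \mathrm{D}_\Omega(\calF))$, and then invokes Lemma~\ref{lem:morphisms-equiv} (the identification $\Hom_{\calD^b \Coh^G(X)} \cong (\Hom_{\calD^b \Coh(X)})^G$) together with the commutative square of Lemma~\ref{lem:duality-equiv} to lift it to the equivariant derived category; existence, canonicity and invertibility all come for free from this one-line transport. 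Your chain-level construction via the double-evaluation map into $\sheafHom_{\calO_X}(\sheafHom_{\calO_X}(\calF,\calI_\Omega),\calI_\Omega)$ is a legitimate alternative, and your conservativity argument for $\For$ is sound, but it carries more technical overhead than you may realize: since ${}^0\mathrm{D}_\Omega^{G}$ is only defined on $\calC^b \Coh^G(X)$ while $\sheafHom_{\calO_X}(\calF,\calI_\Omega)$ lives in $\calC^+ \QCoh^G(X)$, the iterated chain-level $\sheafHom$ does not literally compute $\mathrm{D}_\Omega^{G} \circ \mathrm{D}_\Omega^{G}$ as defined; you must interpose a quasi-isomorphism from a bounded coherent representative and check that $\sheafHom_{\calO_X}(-,\calI_\Omega)$ sends it to a quasi-isomorphism, on top of the comparison with $\calJ_\Omega$ that you do flag. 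All of this is doable with the ingredients of \S\ref{ss:duality-equiv}, but it is exactly the bookkeeping that the paper's invariant-lifting argument is designed to avoid. The trade-off: your version produces an explicit chain-level representative of the isomorphism, while the paper's version is shorter and reuses Lemma~\ref{lem:morphisms-equiv}, which is in any case needed elsewhere (e.g.\ in Proposition~\ref{prop:basechange}).
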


\begin{proof}
For $\calF$ in $\calD^b \Coh^G(X)$, the isomorphism $\varepsilon_\Omega(\calF) : \calF \xrightarrow{\sim} \mathrm{D}_\Omega \circ \mathrm{D}_\Omega(\calF)$ is canonical, and in particular invariant under the action of $G$ on $\Hom_{\calD^b \Coh(X)}(\calF, \mathrm{D}_\Omega \circ \mathrm{D}_\Omega(\calF))$. Using the commutativity of the diagram in Lemma~\ref{lem:duality-equiv} and Lemma~\ref{lem:morphisms-equiv}, we deduce the existence of the canonical isomorphism $\Id \xrightarrow{\sim} \mathrm{D}_\Omega^{G} \circ \mathrm{D}_\Omega^{G}$. The final claim is obvious.
\end{proof}

\subsection{Grothendieck--Serre duality in the dg setting}
\label{ss:duality-dg-equiv}

As above let $X$ be a complex algebraic variety, endowed with an action of a reductive algebraic group $G$. Let also $\calA$ be a $G \times \Gm$-equivariant,\footnote{The $\Gm$-equivariance (or equivalently the additional $\Z$-grading) will not play any role in \S\S\ref{ss:duality-dg-equiv}--\ref{ss:direct-image-equiv}. We make this assumption to avoid introducing new notation, and because this is the only case we will considerx.} non-positively (cohomologically) graded, graded-commutative, sheaf of quasi-coherent $\calO_X$-dg-algebras. We assume furthermore that $\calA$ is locally finitely generated over $\calA^0$, that $\calA^0$ is locally finitely generated as an $\calO_X$-algebra, and that $\calA$ is $K$-flat as a $\Gm$-equivariant $\calA^0$-dg-module (in the sense of \cite{Sp}). Finally, we will assume that condition~\eqref{eqn:enough-flats} is satisfied.

If $A$ denotes the ($G$-equivariant) affine scheme over $X$ such that the pushforward of $\calO_A$ to $X$ is $\calA^0$, then there exists a $\Gm$-equivariant quasi-coherent $G$-equivariant $\calO_A$-dg-algebra $\calA'$ whose direct image to $X$ is $\calA$. Moreover there exists an equivalence of categories $\calC(\calA'\Mod^G) \cong \calC(\calA\Mod^G)$. Using this trick we can reduce our situation to the case $\calA$ is $\calO_X$-coherent and $K$-flat as an $\calO_X$-dg-module.

Using conventions similar to those in \cite{MR2},
we denote by $\calD^\bc(\calA\Mod^G)$ the subcategory of $\calD(\calA\Mod^G)$ whose objects are the dg-modules $\calM$ such that, for any $j \in \Z$, the complex $\calM_j$ has bounded and coherent cohomology.

\begin{lem}
\label{lem:morphisms-dg-equiv}
For any $\calF,\calG$ in $\calD^\bc(\calA\Mod^G)$ the $\C$-vector space $\Hom_{\calD^\bc(\calA\Mod)}(\calF,\calG)$ has a natural structure of an algebraic $G$-module. Moreover, the natural morphism
\begin{equation}
\label{eqn:morphisms-dg-equiv}
\Hom_{\calD^\bc(\calA\Mod^G)}(\calF,\calG) \ \to \ \bigl( \Hom_{\calD^\bc(\calA\Mod)}(\calF,\calG) \bigr)^G
\end{equation}
induced by the forgetful functor is an isomorphism.
\end{lem}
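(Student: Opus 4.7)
The plan is to reduce, via appropriate resolutions, to the analogous statement at the level of coherent sheaves established in Lemma~\ref{lem:morphisms-equiv}, exploiting exactness of $G$-invariants on algebraic $G$-modules (a consequence of reductivity of $G$). Before starting, I would apply the trick mentioned just before the lemma to reduce to the case where $\calA$ is $\calO_X$-coherent and $K$-flat over $\calO_X$; this makes $\calA$-dg-modules well-behaved enough to run the usual homotopical machinery.

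The $G$-action on $\Hom_{\calD^\bc(\calA\Mod)}(\calF,\calG)$ arises from functoriality: for each $g \in G$, the equivariant structures on $\calF$ and $\calG$ provide dg-automorphisms $\rho_\calF(g)$ and $\rho_\calG(g)$ of $\For\calF$ and $\For\calG$, and one sets $g \cdot \phi := \rho_\calG(g) \circ \phi \circ \rho_\calF(g)^{-1}$. Its algebraicity will be verified in tandem with the isomorphism, by a careful choice of resolution.

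For the isomorphism, I would build a $K$-projective resolution $\calP \xrightarrow{\qis} \calF$ in $\calC(\calA\Mod^G)$ via a Spaltenstein-style construction, using the enough-flats assumption~\eqref{eqn:enough-flats} to produce, at each stage, a $G$-equivariant $\calO_X$-coherent flat cover. A crucial point is that $\For(\calP)$ remains $K$-projective in $\calC(\calA\Mod)$. For this I would use the adjunction $\For \dashv \Av$ at the chain level: for any acyclic $\calN$ in $\calC(\calA\Mod)$,
\[
\Hom^\bullet_{\calA\Mod}(\For\calP, \calN) \;\cong\; \Hom^\bullet_{\calA\Mod^G}(\calP, \Av\calN),
\]
and the right-hand side is acyclic since $\Av$ is exact and $\calP$ is equivariantly $K$-projective. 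Using such a resolution, both sides of~\eqref{eqn:morphisms-dg-equiv} are computed as $H^0$ of Hom complexes involving $\calP$, and the chain-level identity
\[
\Hom^\bullet_{\calA\Mod^G}(\calP, \calG) \;=\; \bigl( \Hom^\bullet_{\calA\Mod}(\For\calP, \For\calG) \bigr)^G
\]
together with exactness of $(-)^G$ on algebraic $G$-modules (by reductivity of $G$) would yield both the algebraic $G$-module structure on the target and the isomorphism~\eqref{eqn:morphisms-dg-equiv}, provided each term of the non-equivariant Hom complex is a rational $G$-module. The latter would follow, degree by degree, from the coherent-flat nature of the components of $\calP$ by the argument in the proof of Lemma~\ref{lem:morphisms-equiv}.

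The main obstacle is balancing two competing requirements on $\calP$: $K$-projectivity (a strong homotopical condition that typically forces $\calP$ to be large and unbounded in the internal grading) and local finiteness of its components (needed so that the Hom complexes consist of algebraic $G$-modules term by term). Because objects of $\calD^\bc(\calA\Mod^G)$ need not be globally bounded internally, this requires a careful stratified or filtered construction, leveraging the $K$-flatness of $\calA$ over $\calO_X$ (after the reduction) so that coherent pieces can be glued into a $K$-projective resolution without losing rationality of the resulting Hom complex.
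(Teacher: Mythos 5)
Your reduction to the case where $\calA$ is $\calO_X$-coherent and $K$-flat, and your construction of the $G$-action, match the paper; but the core of your argument has a genuine gap: the $K$-projective resolution $\calP \xrightarrow{\qis} \calF$ in $\calC(\calA\Mod^G)$ that everything hinges on does not exist in general. Condition~\eqref{eqn:enough-flats} provides \emph{flat} coherent covers, and a Spaltenstein-style construction built from these yields a $K$-\emph{flat} resolution (this is exactly Lemma~\ref{lem:Kflats}), which controls derived tensor products but does not compute $\Hom$ in the derived category. Genuinely $K$-projective complexes of quasi-coherent sheaves are essentially unavailable on a non-affine variety: already a complex $\calP$ concentrated in degree $0$ is $K$-projective only if $\calP$ is a projective object of $\QCoh(X)$, and, e.g., $\calO_{\mathbb{P}^1}$ is not projective since $\Ext^1_{\QCoh(\mathbb{P}^1)}(\calO,\calO(-2)) \cong H^1(\mathbb{P}^1,\calO(-2)) \neq 0$. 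Your adjunction argument that $\For(\calP)$ would inherit $K$-projectivity from $\calP$ via $\For \dashv \Av$ is correct as far as it goes, but there is no such $\calP$ to apply it to; consequently the identification of either side of~\eqref{eqn:morphisms-dg-equiv} with $H^0$ of a naive $\Hom$-complex against $\calP$ fails. The difficulty you flag at the end (reconciling $K$-projectivity with local finiteness of the components) is real but secondary to this existence problem.

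The paper sidesteps absolute projectivity by using projectivity \emph{relative to} $\calO_X$: it introduces the induction functor $\mathrm{Ind}_\calA = \calA \otimes_{\calO_X}(-)$, which is exact by the $K$-flatness hypothesis on $\calA$ and is left adjoint to the forgetful functor $\For_\calA$ both before and after passing to derived categories. When $\calF = \mathrm{Ind}_\calA(\calF')$, adjunction reduces~\eqref{eqn:morphisms-dg-equiv} to the analogous statement for $\calO_X$-dg-modules, which follows from Lemma~\ref{lem:morphisms-equiv} by d\'evissage. A general object of $\calD^\bc(\calA\Mod^G)$ is then presented as a direct limit of objects admitting finite filtrations with induced subquotients (a simplified form of the construction in \cite{R2}), and one concludes using the fact that $(-)^G$ commutes with the resulting inverse limits. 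If you want a resolution-style proof closer in spirit to yours, the workable variant is to resolve $\calG$ by a $K$-injective object of $\calC(\calA\Mod^G)$ whose image under $\For$ remains $\Hom$-acyclic (the dg-analogue of Lemma~\ref{lem:ext-vanishing-equiv}), rather than to resolve $\calF$ projectively.
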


\begin{proof}
The construction of the $G$-action is similar the the one in \S\ref{ss:duality-equiv}. Now, let us prove that~\eqref{eqn:morphisms-dg-equiv} is an isomorphism. As explained above, we can assume that $\calA$ is $\calO_X$-coherent and $K$-flat as an $\calO_X$-dg-module. Consider the induction functor
\[
\mathrm{Ind}_\calA : \left\{
\begin{array}{ccc}
\calC(\calO_X\Mod^G) & \to & \calC(\calA\Mod^G) \\
\calM & \mapsto & \calA \otimes_{\calO_X} \calM
\end{array}
\right. .
\]
This functor is left adjoint to the forgetful functor $\For_\calA : \calC(\calA\Mod^G) \to \calC(\calO_X\Mod^G)$. Moreover, by our $K$-flatness assumption the functor $\mathrm{Ind}_\calA$ is exact, hence induces a functor between derived categories, which we denote similarly. Then the functors
\[
\mathrm{Ind}_\calA : \calD(\calO_X\Mod^G) \to \calD(\calA\Mod^G) \quad \text{and} \quad \For_\calA : \calD(\calA\Mod^G) \to \calD(\calO_X\Mod^G)
\]
are again adjoint. As $\calA$ is $\calO_X$-coherent, the functor $\mathrm{Ind}_\calA$ sends the subcategory $\calD^\bc(\calO_X\Mod^G)$ into $\calD^\bc(\calA\Mod^G)$.

Using these remarks and Lemma~\ref{lem:morphisms-equiv}, one can show that~\eqref{eqn:morphisms-dg-equiv} is an isomorphism in the case $\calF=\mathrm{Ind}_\calA(\calF')$ for some object $\calF'$ in $\calD^\bc(\calO_X\Mod^G)$. Now we explain how to reduce the general case to this case. In fact, using a simplified form of the construction in \cite[proof of Theorem 1.3.3]{R2} (without taking $K$-flat resolutions), one can check that any object of $\calD^\bc(\calA\Mod^G)$ is a direct limit of a family $(\calP_p)_{p \geq 0}$ of objects of $\calD^\bc(\calA\Mod^G)$ such that each $\calP_p$ admits a finite filtration with subquotients of the form $\mathrm{Ind}_\calA(\calH)$ for some $\calH$ in $\calD^\bc(\calO_X\Mod^G)$. As the functor of $G$-fixed points commutes with inverse limits, this reduces the general case to the case treated above, and finishes the proof.
\end{proof}

Finally we can prove our ``duality'' statement for $G$-equivariant $\calA$-dg-modules. First, recall that there is a canonical equivalence of triangulated categories
\[
\mathrm{D}_\Omega^\calA : \calD^\bc(\calA\Mod) \xrightarrow{\sim} \calD^\bc(\calA\Mod)^\op,
\] 
where the exponent ``$\bc$'' has the same meaning as above, see \cite[\S 1.5]{MR2} for details.
The equivariant analogue of this statement can be deduced from the properties of the functor $\mathrm{D}_\Omega^\calA$ using Lemma~\ref{lem:morphisms-dg-equiv}, just as the properties of the functor $\mathrm{D}_\Omega^{G}$ where deduced from those of $\mathrm{D}_\Omega$ in \S\ref{ss:duality-equiv}.

\begin{prop}
\label{prop:duality-dg-equiv}
There exists an equivalence of categories
\[
\mathrm{D}_\Omega^{\calA,G} : \calD^\bc(\calA\Mod^G) \xrightarrow{\sim} \calD^\bc(\calA\Mod^G)^\op
\]
such that the following diagram commutes (where the vertical arrows are the natural forgetful functors):
\[
\xymatrix@C=2cm@R=0.6cm{
\calD^\bc(\calA\Mod^G) \ar[r]^-{\mathrm{D}_\Omega^{\calA,G}}_-{\sim} \ar[d]_-{\For} & \calD^\bc(\calA\Mod^G)^\op \ar[d]^-{\For} \\
\calD^\bc(\calA\Mod) \ar[r]^-{\mathrm{D}_\Omega^{\calA}}_-{\sim} & \calD^\bc(\calA\Mod)^\op
}
\]
and a canonical isomorphism of functors $\Id \xrightarrow{\sim} \mathrm{D}_\Omega^{\calA,G} \circ \mathrm{D}_\Omega^{\calA,G}$.
\end{prop}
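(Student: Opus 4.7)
The plan is to mirror the passage from Lemma~\ref{lem:morphisms-equiv}/Lemma~\ref{lem:duality-equiv}/Corollary~\ref{cor:duality-equiv} to the dg setting, using Lemma~\ref{lem:morphisms-dg-equiv} in place of Lemma~\ref{lem:morphisms-equiv}, exactly as the final comment preceding the proposition suggests.

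First I would construct the functor at the complex level. After the standard reduction (explained in the text) to the case where $\calA$ is $\calO_X$-coherent and $K$-flat as an $\calO_X$-dg-module, choose a bounded-below complex $\calI_\Omega$ of $G \times \Gm$-equivariant injective quasi-coherent $\calO_X$-modules representing $\Omega$, and set
\[
{}^0\mathrm{D}_\Omega^{\calA,G}(\calM) \ := \ \sheafHom_\calA\bigl(\calM, \calA \otimes_{\calO_X} \calI_\Omega\bigr)
\]
in analogy with \cite[\S 1.5]{MR2}. The $K$-flatness of $\calA$ over $\calO_X$ together with injectivity of the components of $\calI_\Omega$ in $\QCoh^G(X)$ implies via tensor-hom adjunction that $\calA \otimes_{\calO_X} \calI_\Omega$ is $K$-injective in $\calC(\calA\Mod^G)$, so ${}^0\mathrm{D}_\Omega^{\calA,G}$ descends to a well-defined functor on $\calD(\calA\Mod^G)$.

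Next I would prove that this functor lands in $\calD^\bc(\calA\Mod^G)^\op$ and makes the displayed diagram commute. The argument is a dg-analogue of the one for diagram~\eqref{eqn:diagram-duality-equiv}: fix a quasi-isomorphism $\calI_\Omega \xrightarrow{\qis} \calJ_\Omega$ to a bounded-below complex of injective quasi-coherent $\calO_X$-modules used to compute $\mathrm{D}_\Omega^\calA$, with cone $\calK_\Omega$, and check that for $\calM \in \calD^\bc(\calA\Mod^G)$ the complex $\sheafHom_\calA(\calM, \calA \otimes_{\calO_X} \calK_\Omega)$ is acyclic. Using~\eqref{eqn:enough-flats} and the construction recalled in the proof of Lemma~\ref{lem:morphisms-dg-equiv}, such an $\calM$ may be replaced by a bounded-above complex built from pieces of the form $\Ind_\calA(\calP)$ with $\calP$ an $\calO_X$-flat coherent object of $\Coh^G(X)$; the adjunction
\[
\sheafHom_\calA\bigl(\Ind_\calA(\calP), \calA \otimes_{\calO_X} \calK_\Omega\bigr) \ \cong \ \sheafHom_{\calO_X}(\calP,\calK_\Omega)
\]
then reduces the claim to the acyclicity statement proved inside the proof of Lemma~\ref{lem:duality-equiv}. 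Boundedness of the output of $\mathrm{D}_\Omega^{\calA,G}$ follows from the non-equivariant analogue \cite[\S 1.5]{MR2} via the commutativity just obtained.

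Finally, the canonical isomorphism $\Id \xrightarrow{\sim} \mathrm{D}_\Omega^\calA \circ \mathrm{D}_\Omega^\calA$ of \cite[\S 1.5]{MR2} is, by its canonicity, $G$-invariant: for each $\calM \in \calD^\bc(\calA\Mod^G)$ the corresponding element of $\Hom_{\calD^\bc(\calA\Mod)}(\calM, \mathrm{D}_\Omega^\calA \circ \mathrm{D}_\Omega^\calA(\calM))$ is $G$-invariant. Using the commutative diagram from the previous step and the isomorphism~\eqref{eqn:morphisms-dg-equiv}, it lifts uniquely to an equivariant morphism $\calM \to \mathrm{D}_\Omega^{\calA,G} \circ \mathrm{D}_\Omega^{\calA,G}(\calM)$; naturality is deduced analogously from Lemma~\ref{lem:morphisms-dg-equiv} applied to the naturality squares, and the equivalence statement is then formal from the existence of such a natural isomorphism with $\Id$. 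The main technical obstacle is the comparison carried out in the previous paragraph: the reduction of an arbitrary object of $\calD^\bc(\calA\Mod^G)$ to a bounded-above complex of inductions of $\calO_X$-flat equivariant coherent sheaves — which is precisely the input also needed for Lemma~\ref{lem:morphisms-dg-equiv} — is the delicate step, after which exactness, boundedness, and the lifting of the canonical involution isomorphism all follow formally.
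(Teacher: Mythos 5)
Your overall plan is exactly the one the paper intends (the paper gives only the one-sentence indication preceding the proposition: repeat the arguments of \S\ref{ss:duality-equiv} with Lemma~\ref{lem:morphisms-dg-equiv} in place of Lemma~\ref{lem:morphisms-equiv}). The gap is in your chain-level construction of the functor. You set ${}^0\mathrm{D}_\Omega^{\calA,G}(\calM)=\sheafHom_\calA(\calM,\calA\otimes_{\calO_X}\calI_\Omega)$ and justify this by claiming that $\calA\otimes_{\calO_X}\calI_\Omega$ is $K$-injective ``via tensor-hom adjunction''. This is backwards: tensor-hom adjunction says $\Hom_\calA(\calM,\sheafHom_{\calO_X}(\calA,\calI))\cong\Hom_{\calO_X}(\calM,\calI)$, so it is the \emph{coinduced} module $\sheafHom_{\calO_X}(\calA,\calI_\Omega)$ that is $K$-injective (which is precisely why $\mathrm{Coind}_\calA$, not $\mathrm{Ind}_\calA$, appears in the proof of Proposition~\ref{prop:direct-image-equiv}); the \emph{induced} module $\calA\otimes_{\calO_X}\calI_\Omega$ inherits flatness, not injectivity, and is not $K$-injective in general (already for $\calA=\Sym(V)$ over a point, $\calA\otimes_\C \C=\calA$ is not an injective $\calA$-module). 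The same confusion appears in your displayed ``adjunction'' $\sheafHom_\calA(\Ind_\calA(\calP),\calA\otimes_{\calO_X}\calK_\Omega)\cong\sheafHom_{\calO_X}(\calP,\calK_\Omega)$: the induction adjunction yields $\sheafHom_{\calO_X}(\calP,\calA\otimes_{\calO_X}\calK_\Omega)$, not $\sheafHom_{\calO_X}(\calP,\calK_\Omega)$.

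This is not only a matter of choosing a bad resolution: even the derived functor your formula would compute, $R\sheafHom_\calA(-,\calA\otimes_{\calO_X}\Omega)$, differs from the Grothendieck--Serre duality $R\sheafHom_{\calO_X}(-,\Omega)\cong R\sheafHom_\calA(-,\sheafHom_{\calO_X}(\calA,\Omega))$ of \cite[\S 1.5]{MR2} by the relative dualizing complex of $\calA$ over $\calO_X$, which is nontrivial for the dg-algebras considered here; hence your compatibility square with $\mathrm{D}_\Omega^\calA$ would fail, and with it the deduction of the biduality isomorphism from the non-equivariant one. The repair restores the intended parallel with Lemma~\ref{lem:duality-equiv}: take ${}^0\mathrm{D}_\Omega^{\calA,G}:=\sheafHom_{\calO_X}(-,\calI_\Omega)$ with the transposed $\calA$-action (equivalently $\sheafHom_\calA(-,\sheafHom_{\calO_X}(\calA,\calI_\Omega))$), whose exactness is exactly what the first paragraph of the proof of Lemma~\ref{lem:duality-equiv} establishes. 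With that correction the remaining steps of your argument --- reduction to inductions of $\calO_X$-flat equivariant coherent sheaves, comparison of the equivariant and non-equivariant injective resolutions via an acyclic cone, and the lifting of the canonical isomorphism $\varepsilon_\Omega$ through Lemma~\ref{lem:morphisms-dg-equiv} --- do go through and match the paper's intended argument.
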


\subsection{Inverse image of equivariant dg-sheaves}
\label{ss:inverse-image-equiv}

We let $X$ and $Y$ be complex algebraic varieties, each endowed with an action of an algebraic group $G$. (In practice $G$ will be reductive, as above, but this property will not be used in this subsection.) We also assume that condition~\eqref{eqn:enough-flats} is satisfied on $Y$.

Let $\calA$, respectively $\calB$, be a sheaf of non-positively graded, graded-commutative, quasi-coherent, $G \times \Gm$-equivariant $\calO_X$-dg-algebras, respectively $\calO_Y$-dg-algebras. Let $f : (X,\calA) \to (Y,\calB)$ be a $G \times \Gm$-equivariant morphism of dg-ringed spaces.
By \cite[Proposition 3.2.2]{BR} (see also \cite[\S 1.1]{MR2}) the inverse image functor
\[
f^* : \calC(\calB \Mod) \to \calC(\calA \Mod)
\]
admits a left derived functor
\begin{equation}
\label{eqn:Lf}
Lf^* : \calD(\calB \Mod) \to \calD(\calA \Mod).
\end{equation}
This property follows from the existence of $K$-flat resolutions in $\calC(\calB\Mod)$. The same arguments extend to the $G$-equivariant setting under our assumption that condition~\eqref{eqn:enough-flats} holds on $Y$. 

\begin{lem}
\label{lem:Kflats}
For any object $\calM$ in $\calC(\calB \Mod^G)$, there exists an object $\calP$ in $\calC(\calB \Mod^G)$, which is $K$-flat as a $\calB$-dg-module, and a quasi-isomorphism of $G \times \Gm$-equivariant $\calB$-dg-modules $\calP \xrightarrow{\qis} \calF$.
\end{lem}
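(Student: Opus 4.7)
The plan is to adapt the standard construction of $K$-flat resolutions from \cite{Sp} and \cite[Proposition 3.2.2]{BR} to the $G \times \Gm$-equivariant setting. That construction proceeds in two main stages: first, for a bounded-above $\calB$-dg-module, one builds a termwise-flat resolution and observes that any bounded-above complex of flat $\calB$-dg-modules is automatically $K$-flat; second, for a general $\calM$, one realizes it as a filtered colimit of suitably truncated subobjects, builds compatible $K$-flat resolutions of those, and takes a filtered colimit, using that a filtered colimit of $K$-flat $\calB$-dg-modules is $K$-flat. The only genuinely new input in the equivariant case is that every choice can be made inside $\QCoh^G(Y)$.

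The first step I would carry out is to check that $\QCoh^G(Y)$ has enough $\calO_Y$-flat objects: any equivariant quasi-coherent sheaf is a filtered colimit of its equivariant coherent subsheaves, and for each of these \eqref{eqn:enough-flats} furnishes a flat equivariant cover; assembling these gives an $\calO_Y$-flat surjection in $\QCoh^G(Y)$. With this in hand, a standard bar-type construction, applied inductively along the cohomological degree, produces for any bounded-above $\calM$ in $\calC(\calB\Mod^G)$ a quasi-isomorphism $\calP \xrightarrow{\qis} \calM$ where $\calP = \calB \otimes_{\calO_Y} \calQ$ for a bounded-above complex $\calQ$ of flat objects of $\QCoh^G(Y)$; such $\calP$ is $K$-flat over $\calB$.

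For the unbounded case I would follow Spaltenstein's approach: write $\calM$ as a filtered colimit of a directed system of equivariant $\calB$-dg-submodules $\calM_n$ each of which is cohomologically bounded above (obtained from an increasing exhaustive filtration compatible with the $\calB$-action), apply the bounded-above construction functorially to obtain a compatible tower $\calP_n \xrightarrow{\qis} \calM_n$ of $K$-flat resolutions in $\calC(\calB\Mod^G)$, and set $\calP := \mathrm{colim}_n \calP_n$. Since filtered colimits are exact in $\QCoh^G(Y)$, the map $\calP \to \calM$ is a quasi-isomorphism; and since a filtered colimit of $K$-flat $\calB$-dg-modules is $K$-flat, $\calP$ has the desired properties.

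The main obstacle is purely bookkeeping: making sure every construction (flat covers, tensor products, truncations, colimits) stays inside $\calC(\calB\Mod^G)$. Equivariant flat covers are provided by \eqref{eqn:enough-flats}; stability under tensoring, shifts, and filtered colimits is automatic; and the internal $\Z$-grading encoding the $\Gm$-equivariance is preserved by every operation involved. Once these routine verifications are in place, the argument is formally identical to the non-equivariant one of \cite[Proposition 3.2.2]{BR}.
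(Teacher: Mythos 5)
Your proposal is correct and follows exactly the route the paper intends: the paper gives no explicit proof of Lemma~\ref{lem:Kflats}, asserting only that the non-equivariant argument of \cite[Proposition 3.2.2]{BR} ``extends to the $G$-equivariant setting under our assumption that condition~\eqref{eqn:enough-flats} holds on $Y$,'' and your write-up is precisely that extension, with the key new input correctly identified as the existence of equivariant $\calO_Y$-flat covers furnished by \eqref{eqn:enough-flats} (which, as the paper notes, passes from coherent to quasi-coherent objects). The remaining steps --- semi-flat resolutions of bounded-above dg-modules via induced modules $\calB \otimes_{\calO_Y} \calQ$, exhaustion of a general $\calM$ by truncations, and exactness of filtered colimits --- are the standard Spaltenstein-type bookkeeping, carried out correctly.
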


In particular, it follows from this lemma that the functor
\[ 
f^*: \calC(\calB \Mod^G) \to \calC(\calA \Mod^G)
\]
admits a left derived functor
\[
Lf^* : \calD(\calB\Mod^G) \to \calD(\calA\Mod^G).
\]
Moreover, the following diagram commutes by definition: 
\begin{equation}
\label{eqn:diagram-inverse-image}
\vcenter{
\xymatrix@C=2cm@R=0.6cm{
\calD(\calB\Mod^G) \ar[d]_-{\For} \ar[r]^-{Lf^*} & \calD(\calA\Mod^G) \ar[d]^-{\For} \\
\calD(\calB\Mod) \ar[r]^-{\eqref{eqn:Lf}} & \calD(\calA\Mod). 
}
}
\end{equation}
(This property justifies our convention that the notation $Lf^*$ denotes the derived functor both in the equivariant and non-equivariant settings.)

\subsection{Direct image of equivariant dg-sheaves}
\label{ss:direct-image-equiv}

We let again $X$ and $Y$ be complex algebraic varieties, each endowed with an action of an algebraic group $G$. Let $\calA$, respectively $\calB$, be a sheaf of non-positively graded, graded-commutative, $G \times \Gm$-equivariant, quasi-coherent $\calO_X$-dg-algebras, respectively $\calO_Y$-dg-algebras. Let $f : (X,\calA) \to (Y,\calB)$ be a $G \times \Gm$-equivariant morphism of dg-ringed spaces.

We will assume that $\calA$ is locally free of finite rank over $\calA^0$, that $\calA^0$ is locally finitely generated as an $\calO_X$-algebra, and finally that $\calA$ is $K$-flat as a $\Gm$-equivariant $\calA^0$-dg-module.

It follows from \cite[Proposition 3.3.2]{BR} (existence of $K$-injective resolutions in $\calC(\calA\Mod)$), see also \cite[\S 1.1]{MR2}, that the direct image functor
\[
f_* : \calC(\calA\Mod) \to \calC(\calB\Mod)
\]
admits a right derived functor
\begin{equation}
\label{eqn:direct-image-non-equiv}
Rf_* : \calD(\calA\Mod) \to \calD(\calB\Mod).
\end{equation}
Our goal in this subsection is to extend this property to the equivariant setting. In this case for simplicity we restrict to a subcategory.

We denote by $\calC^+(\calA\Mod^G)$ the subcategory of $\calC(\calA\Mod^G)$ whose objects are bounded below in the cohomological grading (uniformly in the internal grading), and by $\calD^+(\calA\Mod^G)$ the full subcategory of $\calD(\calA\Mod^G)$ whose objects are the dg-modules whose cohomology is bounded below. Note that the natural functor from the derived category associated with $\calC^+(\calA\Mod^G)$ to $\calD(\calA\Mod^G)$ is fully faithful, with essential image $\calD^+(\calA\Mod^G)$.

\begin{prop}
\label{prop:direct-image-equiv}
\begin{enumerate}
\item
For any $\calM$ in  $\calC^+(\calA\Mod^G)$, there exists an object $\calI$ in $\calC^+(\calA\Mod^G)$ which is $K$-injective in $\calC(\calA\Mod^G)$ and a quasi-isomorphism $\calM \xrightarrow{\qis} \calI$.
\item
The functor $f_* : \calC^+(\calA\Mod^G) \to \calC(\calB\Mod^G)$ admits a right derived functor
\[
Rf_* : \calD^+(\calA\Mod^G) \to \calD(\calB\Mod^G).
\]
Moreover, the following diagram is commutative up to isomorphism:
\[
\xymatrix@C=2cm@R=0.6cm{
\calD^+(\calA\Mod^G) \ar[r]^-{Rf_*} \ar[d]_-{\For} & \calD(\calB\Mod^G) \ar[d]^-{\For} \\
\calD(\calA\Mod) \ar[r]^-{\eqref{eqn:direct-image-non-equiv}} & \calD(\calB\Mod).
}
\]
\end{enumerate}
\end{prop}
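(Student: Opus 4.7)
The plan is to reduce to the case $\calA^0 = \calO_X$, build $K$-injective resolutions via coinduction from $\QCoh^G(X)$, and then deduce part (ii) and its compatibility with $\For$ by controlling the underlying non-equivariant resolution. First, using the trick recalled in \S\ref{ss:duality-dg-equiv}, I would replace $X$ by the $G$-equivariant affine $X$-scheme whose structure sheaf pushes forward to $\calA^0$; after this reduction $\calA$ becomes locally free of finite rank over $\calO_X$, and the coinduction $\Coind_\calA := \sheafHom_{\calO_X}(\calA,-)$ is an exact right adjoint to $\For_\calA$ in both the $G$-equivariant and non-equivariant settings. Its underlying sheaf is insensitive to the $G$-action, so $\For \circ \Coind_\calA^G \cong \Coind_\calA \circ \For$ at the level of complexes.

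For part (i), I would follow the non-equivariant construction of \cite[Proposition 3.3.2]{BR}, which produces $K$-injective resolutions in $\calC(\calA\Mod)$ from injective resolutions in $\QCoh(X)$ via coinduction along $\calA$. The same argument carries over to $\calC^+(\calA\Mod^G)$: the key input is that $\QCoh^G(X)$ has enough injectives (established in \S\ref{ss:duality-equiv} via the averaging functor), and that $\Coind_\calA$ sends an injective $\calJ \in \QCoh^G(X)$ (concentrated in cohomological degree zero) to a $K$-injective object of $\calC(\calA\Mod^G)$. The latter claim follows from the adjunction $\For_\calA \dashv \Coind_\calA$ combined with the standard fact that a single injective of an abelian category, viewed as a one-term complex, is $K$-injective. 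A bounded below Cartan--Eilenberg totalization of a termwise injective resolution of $\calM$ then yields the required $K$-injective resolution $\calM \xrightarrow{\qis} \calI$ in $\calC^+(\calA\Mod^G)$.

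For part (ii), the existence of $Rf_*$ is now formal. For compatibility with $\For$, take $\calI$ as in part (i). By the analysis in \S\ref{ss:duality-equiv} each injective $\calJ \in \QCoh^G(X)$ appearing in the resolution is a direct summand of some $\Av(\calK) = a_*p^*\calK$ with $\calK$ injective in $\QCoh(X)$, so $\For(\calJ)$ is a direct summand of $\C[G] \otimes_\C \calK$, which is injective in $\QCoh(X)$ by \cite[Corollary II.7.9]{H}. Combined with $\For \circ \Coind_\calA^G \cong \Coind_\calA \circ \For$, this shows that $\For(\calI)$ is a bounded below complex built from coinductions of injectives in $\QCoh(X)$, hence is $K$-injective in $\calC(\calA\Mod)$. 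Since $f_*$ commutes with $\For$ at the abelian level, one deduces the required isomorphism $\For(Rf_*\calM) \cong Rf_*(\For\calM)$.

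The main obstacle I expect is the Cartan--Eilenberg totalization in the $\Z^2$-graded dg setting: strictly speaking one must work in the abelian category of $G \times \Gm$-equivariant graded modules over the underlying graded algebra of $\calA$ (with the differential forgotten), take a termwise injective resolution compatible with cocycles and coboundaries, and verify that after totalization the result inherits a genuine $\calA$-dg-module structure and is $K$-injective. This parallels the non-equivariant construction of \cite[Proposition 3.3.2]{BR} but requires careful bookkeeping of the two gradings and the compatibility of the differential with the $G$-action. Everything else is either a formal consequence of adjunctions or a direct reduction to the non-equivariant statements already established in \S\S\ref{ss:duality-equiv}--\ref{ss:inverse-image-equiv}.
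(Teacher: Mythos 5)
Your reduction to $\calA^0=\calO_X$ and your construction of $K$-injective resolutions by coinduction from injectives of $\QCoh^G(X)$ is essentially the paper's route for part (1) (the paper invokes the proof of \cite[Lemma 1.3.5]{R2} rather than \cite[Proposition 3.3.2]{BR}, but the mechanism is the same). The problem is in your argument for the commutativity of the diagram in part (2). You claim that $\For(\Av(\calK))=a_*p^*\calK$ is a direct summand of $\C[G]\otimes_\C\calK=p_*p^*\calK$ and hence injective in $\QCoh(X)$. This identification is not correct: $a_*p^*$ and $p_*p^*$ agree only on \emph{equivariant} sheaves, via the isomorphism $a^*\calF\cong p^*\calF$ --- which is exactly how the paper exploits it in the proof of Lemma~\ref{lem:ext-vanishing-equiv} --- and the injective $\calK$ of $\QCoh(X)$ carries no equivariant structure. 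More importantly, the statement you are implicitly relying on, namely that the forgetful functor sends injectives of $\QCoh^G(X)$ to injectives of $\QCoh(X)$, is not true in general and is not established anywhere in the paper; Lemma~\ref{lem:ext-vanishing-equiv} only proves vanishing of $\Ext^j_{\QCoh(X)}(\calF,\calJ)$ for $\calF$ equivariant \emph{coherent}, which is strictly weaker than injectivity of $\For(\calJ)$. Consequently your assertion that $\For(\calI)$ is $K$-injective in $\calC(\calA\Mod)$ is unsupported, and the final deduction of $\For(Rf_*\calM)\cong Rf_*(\For\calM)$ collapses.

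What is actually needed, and what the paper uses, is weaker: injective objects of $\QCoh^G(X)$ are \emph{$f_*$-acyclic} as non-equivariant sheaves (quoted from the proof of \cite[Lemma 1.5.9]{VV}). This yields the commutativity in the base case $\calA=\calO_X$, $\calB=\calO_Y$; the general case is then reduced to this one via \cite[Proposition 3.3.6]{BR} (compatibility of $Rf_*$ with the forgetful functors $\For_\calA$ and $\For_\calB$ down to $\calO_X$- and $\calO_Y$-dg-modules), combined with the observation that $\mathrm{Coind}_\calA$ sends a bounded below complex of injectives of $\QCoh^G(X)$ to a complex of the same kind. If you want to keep your more direct strategy, you must replace ``injective in $\QCoh(X)$'' by ``$f_*$-acyclic'' throughout and verify that this acyclicity is preserved by coinduction and by your totalization; as written, this step is a genuine gap.
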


\begin{proof}
$(1)$ As in \S\ref{ss:duality-dg-equiv} we can assume that $\calA^0=\calO_X$, so that $\calA$ is locally free of finite rank over $\calO_X$. Under this assumption, the proof of \cite[Lemma 1.3.5]{R2} (using the coinduction functor $\mathrm{Coind}_\calA$) generalizes directly to our setting and proves property $(1)$.

$(2)$ The existence of the derived functor follows from $(1)$. Now, let us prove the commutativity of our diagram. Again we can assume that $\calA^0=\calO_X$. As explained in \cite[Proof of Lemma 1.5.9]{VV}, any injective object of $\QCoh^G(X)$ is $f_*$-acyclic. It follows easily from this that the diagram commutes if $\calA=\calO_X$ and $\calB=\calO_Y$. Using this and \cite[Proposition 3.3.6]{BR} (compatibility of derived direct images for $\calA$- and $\calO_X$-dg-modules), it suffices to prove that the following diagram commutes:
\[
\xymatrix@C=2cm@R=0.6cm{
\calD^+(\calA\Mod^G) \ar[r]^-{Rf_*} \ar[d]_-{\For_\calA} & \calD(\calB\Mod^G) \ar[d]^-{\For_\calB} \\
\calD^+(\calO_X\Mod^G) \ar[r]^-{R(f_0)_*} & \calD(\calO_Y\Mod^G)
}
\]
(where $f_0 : X \to Y$ is the morphism of schemes underlying $f$.) However, this is clear from the construction in $(1)$ and the fact that, under our assumptions, the functor $\mathrm{Coind}_\calA$ sends a bounded below complex of injective objects of $\QCoh^G(X)$ to a complex which has the same property.
\end{proof}

\section{Linear Koszul Duality}
\label{sec:LKD}

\subsection{Linear Koszul duality in the equivariant setting}
\label{ss:lkd-equivariant}

We let $X$ be a complex algebraic variety, endowed with an action of a reductive algebraic group $G$, and let $\Omega$ be an object in $\calD^b \Coh^G(X)$ whose image in $\calD^b \Coh(X)$ is a dualizing complex (see \S\ref{ss:duality-equiv}). We will assume that condition~\eqref{eqn:enough-flats} is satisfied.

Let $E$ be a $G$-equivariant vector bundle on $X$ and let $F_1,F_2 \subset E$ be $G$-equivariant subbundles. As in \cite{MR2}, we denote by $\calE,\calF_1,\calF_2$ the sheaves of sections of $E, F_1, F_2$, and we define the $G \times \Gm$-equivariant complexes
\[
\calX :=(0 \to \calF_1^\bot \to \calF_2^\vee \to 0), \quad \calY:=(0 \to \calF_2 \to \calE/\calF_1 \to 0).
\]
In $\calX$, $\calF_1^\bot$ is in bidegree $(-1,2)$, $\calF_2^\vee$ is in bidegree $(0,2)$, and the differential is the composition of the natural maps $\calF_1^\bot \hookrightarrow \calE^\vee \twoheadrightarrow \calF_2^\vee$. In $\calY$, $\calF_2$ is in bidegree $(1,-2)$, $\calE/\calF_1$ is in bidegree $(2,-2)$, and the differential is the opposite of the composition of the natural maps $\calF_2 \hookrightarrow \calE \twoheadrightarrow \calE/\calF_1$. We will work with the $G \times \Gm$-equivariant sheaves of dg-algebras
\[
\calT:=\Sym(\calX), \quad \calS:=\Sym(\calY), \quad \calR:=\Sym(\calY[2]).
\]

We set
\[
\calD^{\mathrm{c}}_{G \times \Gm}(F_1 \, \rcap_E \, F_2) \ := \ \calD^{\fg}(\calT\Mod^G), \quad
\calD^{\mathrm{c}}_{G \times \Gm}(F_1^{\bot} \, \rcap_{E^*} \, F_2^{\bot}) \ := \ \calD^{\fg}(\calR\Mod^G),
\]
where the exponent ``$\fg$'' means the subcategory of dg-modules over $\calT$ (or $\calR$) whose cohomology is locally finitely generated over $\calH^\bullet(\calT)$ (or $\calH^\bullet(\calR)$). We also set
\[
\calD^{\mathrm{c}}_{\Gm}(F_1 \, \rcap_E \, F_2) \ := \ \calD^{\fg}(\calT\Mod), \quad
\calD^{\mathrm{c}}_{\Gm}(F_1^{\bot} \, \rcap_{E^*} \, F_2^{\bot}) \ := \ \calD^{\fg}(\calR\Mod).
\]
(Here and below, when no group appears in the notation, we mean the categories of $\{1\} \times \Gm$-equivariant dg-modules, \emph{i.e.}~we forget the action of $G$.) Recall that in \cite[Theorem 1.9.3]{MR2} we have constructed an equivalence of triangulated categories
\[
\kappa_{\Omega} : \calD^{\mathrm{c}}_{\Gm}(F_1 \, \rcap_E \, F_2) \ \xrightarrow{\sim} \ \calD^{\mathrm{c}}_{\Gm}(F_1^{\bot} \, \rcap_{E^*} \, F_2^{\bot})^\op.
\]
The following result is an equivariant analogue of this equivalence.

\begin{thm}
\label{thm:lkd-equivariant}
There exists an equivalence of triangulated categories
\[
\kappa_{\Omega}^G : \calD^{\mathrm{c}}_{G \times \Gm}(F_1 \, \rcap_E \, F_2) \ \xrightarrow{\sim} \ \calD^{\mathrm{c}}_{G \times \Gm}(F_1^{\bot} \, \rcap_{E^*} \, F_2^{\bot})^\op,
\]
which satisfies $\kappa_{\Omega}^G(\calM[n] \langle m \rangle) = \kappa_{\Omega}^G(\calM)[-n+m]\langle -m \rangle$ and such that the following diagram commutes:
\[
\xymatrix@C=2cm@R=0.6cm{
\calD^{\mathrm{c}}_{G \times \Gm}(F_1 \, \rcap_E \, F_2) \ar[r]^{\kappa_{\Omega}^G}_{\sim} \ar[d]_-{\For} & \calD^{\mathrm{c}}_{G \times \Gm}(F_1^{\bot} \, \rcap_{E^*} \, F_2^{\bot})^\op \ar[d]^-{\For} \\
\calD^{\mathrm{c}}_{\Gm}(F_1 \, \rcap_E \, F_2) \ar[r]^{\kappa_{\Omega}}_{\sim} & \calD^{\mathrm{c}}_{\Gm}(F_1^{\bot} \, \rcap_{E^*} \, F_2^{\bot})^\op,
}
\]
where the vertical arrows are the forgetful functors.
\end{thm}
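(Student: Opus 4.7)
The plan is to construct $\kappa_\Omega^G$ by running the construction of $\kappa_\Omega$ from \cite{MR2} in the $G$-equivariant setting, using the equivariant tools assembled in Section~\ref{sec:G-equiv}. Recall that $\kappa_\Omega$ is built as the composition of a ``naive'' Koszul duality functor (a purely algebraic construction using the dualities between $\Sym(\calX)$ and $\Sym(\calY[2])$, together with $K$-flat and $K$-injective resolutions to render it exact at the derived level) with Grothendieck--Serre duality $\mathrm{D}_\Omega^\calR$. I would repeat this construction verbatim, but applied to $G \times \Gm$-equivariant dg-modules: the Koszul step uses $K$-flat equivariant resolutions (which exist by Lemma~\ref{lem:Kflats}) and $K$-injective equivariant resolutions in $\calC^+(\calT\Mod^G)$ (which exist by Proposition~\ref{prop:direct-image-equiv}(1)), with the underlying ``Koszul formula'' being local and purely algebraic, hence automatically $G$-equivariant; and Grothendieck--Serre duality is replaced by its equivariant version provided by Proposition~\ref{prop:duality-dg-equiv}.

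Commutativity of the diagram then falls out for free: each of the building blocks commutes with $\For$ (the Koszul step because it is local and purely algebraic, Grothendieck--Serre duality by Proposition~\ref{prop:duality-dg-equiv}), so $\kappa_\Omega^G$ lifts $\kappa_\Omega$ along the forgetful functors. The compatibility with shifts $\kappa_\Omega^G(\calM[n]\langle m \rangle) \cong \kappa_\Omega^G(\calM)[-n+m]\langle -m \rangle$ is then inherited from its non-equivariant counterpart, since $\For$ is conservative and detects both cohomological and internal shifts.

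To prove that $\kappa_\Omega^G$ is an equivalence, I would exchange the roles of $(\calX,\calT)$ and $(\calY[2], \calR)$ (equivalently, of $E$ and $E^*$) to produce by the same recipe a candidate quasi-inverse $\kappa_\Omega^{\prime,G}$, together with adjunction/bi-duality morphisms $\eta\colon \Id \to \kappa_\Omega^{\prime,G} \circ \kappa_\Omega^G$ and $\varepsilon\colon \kappa_\Omega^G \circ \kappa_\Omega^{\prime,G} \to \Id$ defined by the same formulas as in \cite{MR2}. These are natural transformations between functors valued in our equivariant $\calD^\bc$-type categories, so Lemma~\ref{lem:morphisms-dg-equiv} shows that $\eta$ and $\varepsilon$ are isomorphisms if and only if their images under $\For$ are. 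But those are isomorphisms by \cite[Theorem 1.9.3]{MR2}, which settles the claim.

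The main obstacle is not conceptual but organizational: one has to verify that every intermediate step in the construction in \cite{MR2} admits a $G$-equivariant lift that commutes with $\For$. Once one accepts the machinery of Section~\ref{sec:G-equiv}, this verification is mechanical; no new idea is required. The role of Lemma~\ref{lem:morphisms-dg-equiv} is crucial, in that it lets us bypass any direct manipulation of morphisms in the equivariant derived category, deducing functoriality, naturality and invertibility of all structural isomorphisms from the non-equivariant theory by passing to $G$-invariants.
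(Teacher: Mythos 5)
Your proposal is correct and takes essentially the same route as the paper: $\kappa_\Omega^G$ is obtained by lifting each factor of the decomposition $\kappa_\Omega = \xi \circ \overline{\scra}^{\bc} \circ \mathbf{D}_\Omega^{\calT}$ from \cite{MR2} to the equivariant setting via the machinery of Section~\ref{sec:G-equiv}, and commutativity of the diagram with the forgetful functors follows factor by factor. The only minor divergence is the final step: the paper gets invertibility for free because each equivariant factor is already an equivalence (Proposition~\ref{prop:duality-dg-equiv} for the duality, the others by construction), whereas you build a candidate quasi-inverse and use Lemma~\ref{lem:morphisms-dg-equiv} to transfer the unit/counit isomorphisms from \cite[Theorem 1.9.3]{MR2}; both arguments work.
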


\begin{proof}
We use the same notation as in \cite{MR2}; in particular we will consider the equivalences $\overline{\scra}$, $\overline{\scra}^\bc$, $\mathbf{D}_\Omega^{\calT}$ and $\xi$ constructed in \cite[\S 1]{MR2}. With this notation we have $\kappa_\Omega = \xi \circ \overline{\scra}^\bc \circ \mathbf{D}_\Omega^{\calT}$.

It is straightforward to construct an equivalence of categories $\overline{\scra}_G$ which makes the following diagram commutative:
\[
\xymatrix@C=2cm@R=0.6cm{
\calD(\calT\Mod^G_-) \ar[r]^-{\overline{\scra}_G}_-{\sim} \ar[d]_-{\For} & \calD(\calS\Mod_-^G) \ar[d]^-{\For} \\
\calD(\calT\Mod_-) \ar[r]^-{\overline{\scra}}_-{\sim} & \calD(\calS\Mod_-^G).
}
\]
(Here, as in \cite{MR2}, the index ``$-$'' means the subcategories of dg-modules which are bounded above for the internal grading. A similar convention applies to the index ``$+$'' below.) This equivalence restricts to an equivalence
\[
\overline{\scra}_G^\bc : \calD^\bc(\calT\Mod^G_-) \xrightarrow{\sim} \calD^\bc(\calS\Mod_-^G)
\]
(where, as in \cite{MR2} or in \S\ref{ss:duality-dg-equiv}, the exponent ``$\bc$'' means the subcategories of dg-modules whose internal degree components have bounded and coherent cohomology).
Now in \S\ref{ss:duality-dg-equiv} we have constructed an equivalence $\mathrm{D}_\Omega^{\calT,G}$ which induces an equivalence
\[
\mathbf{D}_\Omega^{\calT,G} : \calD^\bc(\calT\Mod^G_+) \to \calD^\bc(\calT\Mod^G_-)^\op.
\]
Moreover, by Proposition~\ref{prop:duality-dg-equiv} the following diagram commutes:
\[
\xymatrix@C=2cm@R=0.6cm{
\calD^\bc(\calT\Mod_+^G) \ar[r]^-{\mathbf{D}_\Omega^{\calT,G}}_-{\sim} \ar[d]_-{\For} & \calD^\bc(\calT\Mod_-^G)^\op \ar[d]^-{\For} \\
\calD^\bc(\calT\Mod_+) \ar[r]^-{\mathbf{D}_\Omega^{\calT}}_-{\sim} & \calD^\bc(\calT\Mod_-)^\op.
}
\]
Finally the regrading functor has an obvious $G$-equivariant analogue $\xi^G$ and, setting $\kappa_\Omega^G:=\xi^G \circ \overline{\scra}_G^\bc \circ \mathbf{D}_\Omega^{\calT,G}$ we obtain an equivalence which makes the following diagram commutative:
\begin{equation}
\label{eqn:diagram-kappa-equiv}
\vcenter{
\xymatrix@C=2cm@R=0.6cm{
\calD^\bc(\calT\Mod^G_+) \ar[r]^{\kappa_{\Omega}^G}_{\sim} \ar[d]_-{\For} & \calD^\bc(\calR\Mod^G_+)^\op \ar[d]^-{\For} \\
\calD^\bc(\calT\Mod_+) \ar[r]^{\kappa_{\Omega}}_{\sim} & \calD^\bc(\calR\Mod_-)^\op.
}
}
\end{equation}

It is easy to check that the natural functors
\[
\calD^\fg(\calT\Mod_+^G) \to \calD^\fg(\calT\Mod^G) \quad \text{and} \quad \calD^\fg(\calR\Mod_-^G) \to \calD^\fg(\calR\Mod^G)
\]
are equivalences of categories and, using the commutativity of~\eqref{eqn:diagram-kappa-equiv}, that under these equivalences $\kappa_\Omega^G$ restricts to an equivalence
\[
\kappa_\Omega^G : \calD^\fg(\calT\Mod^G) \ \xrightarrow{\sim} \ \calD^\fg(\calR\Mod^G)^\op.
\]
This finishes the proof.
\end{proof}

\begin{remark}
From now on we will omit the exponent ``$G$'', and write $\kappa_\Omega$ instead of $\kappa_\Omega^G$. This convention is justified by the commutativity of the diagram in Theorem~\ref{thm:lkd-equivariant}.
\end{remark}

\subsection{Linear Koszul duality and morphisms of vector bundles}
\label{ss:morphisms-lkd}

We let $G$, $X$, $E$ be as in \S\ref{ss:lkd-equivariant}. Let also $E'$ be another $G$-equivariant vector bundle on $X$, and let
\[
\xymatrix@R=0.3cm{
E \ar[rd] \ar[rr]^{\phi} & & E' \ar[ld] \\
& X &
}
\]
be a morphism of $G$-equivariant vector bundles over $X$. We consider $G$-stable subbundles $F_1, F_2 \subseteq E$ and $F_1', F_2' \subseteq E'$, and assume that
\[
\phi(F_1) \subseteq F_1', \qquad \phi(F_2) \subseteq F_2'.
\]
Let $\calE, \, \calF_1, \, \calF_2, \, \calE', \, \calF_1', \, \calF_2'$ be the respective sheaves of sections of $E, \, F_1, \, F_2, \, E', \, F_1', \, F_2'$. We consider the ($G$-equivariant) complexes $\calX$ (for the vector bundle $E$) and $\calX'$ (for the vector bundle $E'$) defined as in \S\ref{ss:lkd-equivariant}, and the associated dg-algebras $\calT$, $\calR$, $\calT'$, $\calR'$.
By Theorem~\ref{thm:lkd-equivariant} we have linear Koszul duality equivalences
\begin{align*}
\kappa_{\Omega} : \calD^{\mathrm{c}}_{G \times \Gm}(F_1 \, \rcap_E \, F_2) \ & \xrightarrow{\sim} \ \calD^{\mathrm{c}}_{G \times \Gm}(F_1^{\bot} \, \rcap_{E^*} \, F_2^{\bot})^\op, \\
\kappa_{\Omega}' : \calD^{\mathrm{c}}_{G \times \Gm}(F_1' \, \rcap_{E'} \, F_2') \ & \xrightarrow{\sim} \ \calD^{\mathrm{c}}_{G \times \Gm}((F_1')^{\bot} \, \rcap_{(E')^*} \, (F_2')^{\bot})^\op.
\end{align*}

The morphism $\phi$ defines a morphism of complexes $\calX' \to \calX$, to which we can apply (equivariant analogues of) the constructions of \cite[\S 2.1]{MR2}. More geometrically, $\phi$ induces a morphism of dg-schemes $\Phi : F_1 \, \rcap_E \, F_2 \to F_1' \, \rcap_{E'} \, F_2'$, and we have a (derived) direct image functor
\[
R\Phi_* : \calD^{\mathrm{c}}_{G \times \Gm}(F_1 \, \rcap_E \, F_2) \to \calD(\calT'\Mod^G).
\]
(This functor is just the restriction of scalars functor associated with the morphism $\calT' \to \calT$.)

The following result immediately follows from \cite[Lemma 2.3.1]{MR2}. 

\begin{lem}
\label{lem:direct-image-Phi}
Assume that the induced morphism between non-derived intersections $F_1 \cap_E F_2 \to F_1' \cap_{E'} F_2'$ is proper. Then the functor $R\Phi_*$ sends $\calD^{\mathrm{c}}_{G \times \Gm}(F_1 \, \rcap_E \, F_2)$ into $\calD^{\mathrm{c}}_{G \times \Gm}(F_1' \, \rcap_{E'} \, F_2')$. 
\end{lem}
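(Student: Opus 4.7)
The plan is to deduce the statement from its non-equivariant analogue \cite[Lemma 2.3.1]{MR2} via the forgetful functor, using the fact that the finite generation condition defining the superscript ``$\fg$'' is detected on the underlying non-equivariant sheaves of cohomology modules.

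First I would record the obvious compatibility
\[
\For \circ R\Phi_* \ \cong \ R\Phi_* \circ \For,
\]
as functors $\calD(\calT\Mod^G) \to \calD(\calT'\Mod)$. Since $R\Phi_*$ is, by definition, the (exact) restriction of scalars functor along the morphism of $G \times \Gm$-equivariant dg-algebras $\calT' \to \calT$, the underlying sheaf of $R\Phi_*(\calM)$ is literally the same as that of $\calM$, only viewed as a $\calT'$-dg-module via the algebra map. The corresponding statement in the non-equivariant setting is clear, and the equivariant structures match up tautologically.

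Next I would observe that an object $\calN$ of $\calD(\calT'\Mod^G)$ lies in $\calD^\fg(\calT'\Mod^G)$ if and only if $\For(\calN)$ lies in $\calD^\fg(\calT'\Mod)$. Indeed, the condition ``$\calH^\bullet(\calN)$ is locally finitely generated over $\calH^\bullet(\calT')$'' refers only to the underlying sheaf of graded modules; the $G$-equivariant structure imposes no additional constraint, and conversely cannot be lost by passage to the underlying sheaf. Hence finite generation in the equivariant derived category is reflected and created by $\For$.

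Combining these two remarks, the lemma is now immediate: given $\calM \in \calD^{\mathrm{c}}_{G \times \Gm}(F_1 \, \rcap_E \, F_2) = \calD^\fg(\calT\Mod^G)$, the object $\For(\calM)$ lies in $\calD^\fg(\calT\Mod)$. Applying \cite[Lemma 2.3.1]{MR2} (whose hypothesis is exactly properness of $F_1 \cap_E F_2 \to F_1' \cap_{E'} F_2'$), we get $R\Phi_*(\For(\calM)) \in \calD^\fg(\calT'\Mod)$. By the compatibility of the first paragraph this equals $\For(R\Phi_*(\calM))$, so by the second paragraph $R\Phi_*(\calM) \in \calD^\fg(\calT'\Mod^G) = \calD^{\mathrm{c}}_{G \times \Gm}(F_1' \, \rcap_{E'} \, F_2')$, as required. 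There is no substantial obstacle here; the only point that deserves a moment's thought is the reflection property of $\For$ in the second paragraph, which ultimately holds because ``locally finitely generated'' is a local condition on sheaves that ignores the equivariant structure.
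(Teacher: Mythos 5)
Your proposal is correct and follows essentially the same route as the paper, which simply states that the lemma ``immediately follows'' from the non-equivariant case \cite[Lemma 2.3.1]{MR2}; your write-up just makes explicit the two (tautological) facts that justify this reduction, namely that the forgetful functor commutes with restriction of scalars and that the ``$\fg$'' condition is a property of the underlying non-equivariant sheaf of cohomology modules.
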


We also consider the (derived) inverse image functor
\[
L\Phi^* : \calD^{\mathrm{c}}_{G \times \Gm}(F_1' \, \rcap_{E'} \, F_2') \to \calD(\calT\Mod^G).
\]
(This functor is just the extension of scalars functor associated with the morphism $\calT' \to \calT$.)

The morphism $\phi$ induces a morphism of vector bundles
\[
\psi:= {}^{\mathrm{t}}\vspace{-2pt}\phi : (E')^* \to E^*,
\]
which satisfies $\psi((F_i')^{\bot}) \subset F_i^{\bot}$ for $i=1,2$. Hence the above constructions and results also apply to $\psi$. We use similar notation.

The following result is an equivariant analogue of \cite[Proposition 2.3.2]{MR2}. The same proof applies; we leave the details to the reader.

\begin{prop}
\label{prop:morphisms}
\begin{enumerate}
\item 
Assume that the morphism of schemes $F_1 \cap_E F_2 \to F_1' \cap_{E'} F_2'$ induced by $\phi$ is proper. Then $L\Psi^*$ sends $\calD^{\mathrm{c}}_{G \times \Gm}(F_1^{\bot} \, \rcap_{E^*} \, F_2^{\bot})$ into $\calD^{\mathrm{c}}_{G \times \Gm}((F_1')^{\bot} \, \rcap_{(E')^*} \, (F_2')^{\bot})$. Moreover, there exists a natural isomorphism of functors 
\[
L\Psi^* \circ \kappa_{\Omega} \ \cong \ \kappa'_{\Omega} \circ R\Phi_* \quad : \calD^{\mathrm{c}}_{G \times \Gm}(F_1 \, \rcap_{E} \, F_2) \to \calD^{\mathrm{c}}_{G \times \Gm}((F_1')^{\bot} \, \rcap_{(E')^*} \, (F_2')^{\bot})^\op.
\]
\item 
Assume that the morphism of schemes $(F_1')^{\bot} \cap_{(E')^*} (F_2')^{\bot} \to F_1^{\bot} \cap_{E'} F_2^{\bot}$ induced by $\psi$ is proper. Then $L\Phi^*$ sends $\calD^{\mathrm{c}}_{G \times \Gm}(F_1' \, \rcap_{E'} \, F_2')$ into $\calD^{\mathrm{c}}_{G \times \Gm}(F_1 \, \rcap_E \, F_2)$. Moreover, there exists a natural isomorphism of functors 
\[
\kappa_{\Omega} \circ L\Phi^* \ \cong \ R\Psi_* \circ \kappa'_{\Omega} \quad : \calD^{\mathrm{c}}_{G \times \Gm}(F_1' \, \rcap_{E'} \, F_2') \to \calD^{\mathrm{c}}_{G \times \Gm}(F_1^{\bot} \, \rcap_{E^*} \, F_2^{\bot})^\op.
\]
\end{enumerate}

In particular, if both assumptions are satisfied, the following diagram is commutative:
\[
\xymatrix@C=2cm{
\calD^{\mathrm{c}}_{G \times \Gm}(F_1 \, \rcap_E \, F_2) \ar[r]^-{\kappa_{\Omega}}_-{\sim} \ar@<0.5ex>[d]^-{R\Phi_*} & \calD^{\mathrm{c}}_{G \times \Gm}(F_1^{\bot} \, \rcap_{E^*} \, F_2^{\bot})^\op \ar@<0.5ex>[d]^-{L\Psi^*} \\
\calD^{\mathrm{c}}_{G \times \Gm}(F_1' \, \rcap_{E'} \, F_2') \ar@<0.5ex>[u]^-{L\Phi^*} \ar[r]^-{\kappa'_{\Omega}}_-{\sim} & \calD^{\mathrm{c}}_{G \times \Gm}((F_1')^{\bot} \, \rcap_{(E')^*} \, (F_2')^{\bot})^\op. \ar@<0.5ex>[u]^-{R\Psi_*}
}
\]
\end{prop}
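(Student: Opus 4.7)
The plan is to deduce the proposition from its non-equivariant counterpart \cite[Proposition 2.3.2]{MR2} by the same formalism that was used to derive Theorem~\ref{thm:lkd-equivariant} from \cite[Theorem 1.9.3]{MR2}. Every building block entering the non-equivariant proof---namely $Lf^*$, $Rf_*$, Grothendieck--Serre duality $\mathrm{D}_\Omega^\calA$, and the Koszul functor $\overline{\scra}$---has been upgraded in Section~\ref{sec:G-equiv} and in the proof of Theorem~\ref{thm:lkd-equivariant} to a $G$-equivariant version that is compatible, up to canonical isomorphism, with the forgetful functor $\For$. The strategy is therefore to repeat the construction of the natural isomorphism of \cite[\S 2.3]{MR2} with every functor replaced by its equivariant analogue, and then to show that invertibility descends from the non-equivariant setting.

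First I would handle the preservation statements for $L\Psi^*$ and $L\Phi^*$. These are conditions on the cohomology of the underlying dg-module and can be checked after applying $\For$; by the commutativity of~\eqref{eqn:diagram-inverse-image} they reduce at once to the conclusions of \cite[Proposition 2.3.2]{MR2}. Next I would assemble the equivariant natural transformation
\[
\alpha \colon L\Psi^* \circ \kappa_\Omega \longrightarrow \kappa'_\Omega \circ R\Phi_*
\]
by mimicking the construction of \cite[proof of Proposition 2.3.2(1)]{MR2}, using the equivariant adjunction $(L\Phi^*, R\Phi_*)$ furnished by Lemma~\ref{lem:Kflats} and Proposition~\ref{prop:direct-image-equiv}, the equivariant Grothendieck--Serre duality of Proposition~\ref{prop:duality-dg-equiv}, and the factorization $\kappa_\Omega^G = \xi^G \circ \overline{\scra}^{\bc}_G \circ \mathbf{D}_\Omega^{\calT,G}$ established in the proof of Theorem~\ref{thm:lkd-equivariant}. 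Because every constituent functor commutes with $\For$ up to canonical isomorphism, the image $\For(\alpha)$ is canonically identified with the non-equivariant natural isomorphism of \cite[Proposition 2.3.2(1)]{MR2}. A morphism in $\calD^{\mathrm{c}}_{G \times \Gm}(\cdots)$ is invertible iff it is a quasi-isomorphism, and this property is tested on the underlying complex; consequently $\alpha$ is an isomorphism. Part~(2) is handled in exactly the same way, and combining parts~(1) and~(2) yields the commutativity of the final diagram.

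The main delicate point will be the bookkeeping required to verify that pasting together the various compatibility squares produces the intended natural transformation rather than some shifted or twisted variant, and that it is natural in the argument. This amounts to a question about morphisms between specific functors into the equivariant derived categories, which, via Lemma~\ref{lem:morphisms-dg-equiv}, reduces to the analogous verification after forgetting the $G$-action; the latter is precisely the content of \cite[Proposition 2.3.2]{MR2}. Apart from this routine but somewhat tedious bookkeeping, no mathematical ingredient beyond those already established in Sections~\ref{sec:G-equiv} and~\ref{sec:LKD} is required.
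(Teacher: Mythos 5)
Your proposal is correct and follows essentially the same route as the paper, which simply states that the proof of the non-equivariant statement \cite[Proposition 2.3.2]{MR2} applies verbatim once each constituent functor is replaced by the equivariant version constructed in Sections~\ref{sec:G-equiv} and~\ref{sec:LKD} and shown there to be compatible with $\For$. Your additional observations---that the preservation claims and the invertibility of the constructed natural transformation can be tested after applying the (conservative) forgetful functor, with Lemma~\ref{lem:morphisms-dg-equiv} handling the identification of morphisms---are exactly the ``details left to the reader.''
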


\subsection{Particular case: inclusion of a subbundle}
\label{ss:subbundles}

We will mainly use only a special case of Proposition~\ref{prop:morphisms}, which we state here for future reference. It is the case when $E=E'$, $\phi=\Id$, $F_1=F_1'$ (and $F_2'$ is any $G$-stable subbundle containing $F_2$). In this case we denote by
\[
f: F_1 \, \rcap_E \, F_2 \to F_1 \, \rcap_E \, F_2', \qquad g: F_1^{\bot} \, \rcap_{E^*} \, (F_2')^{\bot} \to F_1^{\bot} \, \rcap_{E^*} \, F_2^{\bot}
\]
the morphisms of dg-schemes induced by $F_2 \hookrightarrow F_2'$, $(F_2')^{\bot} \hookrightarrow F_2^{\bot}$. The assumption that the morphisms between non-derived intersections are proper is always satisfied here (because these morphisms are closed embeddings). Hence by Proposition~\ref{prop:morphisms} we have functors
\[
Rf_* : \calD^{\mathrm{c}}_{G \times \Gm}(F_1 \, \rcap_E \, F_2) \  \to \ \calD^{\mathrm{c}}_{G \times \Gm}(F_1 \, \rcap_E \, F_2'), \qquad
Lf^* : \calD^{\mathrm{c}}_{G \times \Gm}(F_1 \, \rcap_E \, F_2') \ \to \ \calD^{\mathrm{c}}_{G \times \Gm}(F_1 \, \rcap_E \, F_2),
\]
and similarly for $g$. Moreover, the following proposition holds true.

\begin{prop}
\label{prop:inclusion}
Consider the following diagram:
\[
\xymatrix@C=2cm{
\calD^{\mathrm{c}}_{G \times \Gm}(F_1 \, \rcap_E \, F_2) \ar[r]^-{\kappa_{\Omega}}_-{\sim} \ar@<0.5ex>[d]^-{Rf_*} & \calD^{\mathrm{c}}_{G \times \Gm}(F_1^{\bot} \, \rcap_{E^*} \, F_2^{\bot})^\op \ar@<0.5ex>[d]^-{Lg^*} \\
\calD^{\mathrm{c}}_{G \times \Gm}(F_1 \, \rcap_E \, F_2') \ar[r]^-{\kappa'_{\Omega}}_-{\sim} \ar@<0.5ex>[u]^-{Lf^*} & \calD^{\mathrm{c}}_{G \times \Gm}(F_1^{\bot} \, \rcap_{E^*} \, (F_2')^{\bot})^\op. \ar@<0.5ex>[u]^-{Rg_*}
}
\]
There exist natural isomorphisms of functors
\[
\kappa'_{\Omega} \circ Rf_* \ \cong \ Lg^* \circ \kappa_{\Omega} \quad \text{ and } \quad \kappa_{\Omega} \circ Lf^* \ \cong \ Rg_* \circ \kappa'_{\Omega}.
\]
\end{prop}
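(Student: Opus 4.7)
The plan is to obtain Proposition~\ref{prop:inclusion} as an immediate specialization of Proposition~\ref{prop:morphisms}, so no new argument is really needed beyond translating the setup and checking hypotheses. First I would fit the present data into the framework of \S\ref{ss:morphisms-lkd} by taking $E'=E$, $\phi=\Id_E$, $F_1'=F_1$, and allowing $F_2 \subseteq F_2'$ to be arbitrary. Then the hypotheses $\phi(F_1) \subseteq F_1'$ and $\phi(F_2)\subseteq F_2'$ of Proposition~\ref{prop:morphisms} are automatically satisfied, the transpose $\psi={}^{\mathrm{t}}\phi$ equals $\Id_{E^*}$, and under the identifications $(F_1')^\bot = F_1^\bot$ and $(F_2')^\bot\subseteq F_2^\bot$ the morphisms $\Phi$ and $\Psi$ of \S\ref{ss:morphisms-lkd} become, respectively, the morphisms $f$ and $g$ introduced in the proposition.

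Next I would verify the two properness hypotheses of Proposition~\ref{prop:morphisms}. The morphism between non-derived intersections induced by $\phi$ is the closed embedding $F_1\cap_E F_2 \hookrightarrow F_1 \cap_E F_2'$, and the morphism induced by $\psi$ is the closed embedding $F_1^\bot \cap_{E^*}(F_2')^\bot \hookrightarrow F_1^\bot\cap_{E^*}F_2^\bot$. Both are closed embeddings, hence proper, so both parts $(1)$ and $(2)$ of Proposition~\ref{prop:morphisms} apply.

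Finally, invoking part $(1)$ of Proposition~\ref{prop:morphisms} with $\Phi=f$ and $\Psi=g$ directly yields the first isomorphism of functors
\[
L g^* \circ \kappa_{\Omega} \ \cong \ \kappa'_{\Omega} \circ R f_*,
\]
and invoking part $(2)$ with $\Phi=f$, $\Psi=g$ yields
\[
\kappa_{\Omega} \circ L f^* \ \cong \ R g_* \circ \kappa'_{\Omega}.
\]
Taken together these give the commutativity of the two squares displayed in the statement. There is no genuine obstacle here: the only point requiring care is the bookkeeping of which subbundles play the roles of $F_1,F_2,F_1',F_2'$ in Proposition~\ref{prop:morphisms} and the orientation of the maps $f$ and $g$ (note that $f$ goes in the direction of inclusion $F_2\hookrightarrow F_2'$, while $g$ goes in the opposite direction, since passing to orthogonals reverses inclusions), but this is precisely what makes the hypotheses of both parts of Proposition~\ref{prop:morphisms} simultaneously available.
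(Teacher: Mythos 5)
Your proposal is correct and follows exactly the route the paper takes: Proposition~\ref{prop:inclusion} is stated in \S\ref{ss:subbundles} precisely as the specialization of Proposition~\ref{prop:morphisms} to $E'=E$, $\phi=\Id$, $F_1'=F_1$, with both properness hypotheses satisfied because the induced morphisms between non-derived intersections are closed embeddings. Your bookkeeping of $\Phi=f$, $\Psi=g$ and of the reversal of inclusions under passage to orthogonals matches the paper's setup, so there is nothing to add.
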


\subsection{Linear Koszul duality and base change}
\label{ss:base-change-lkd}

Let $X$ and $Y$ be complex algebraic varieties, each endowed with an action of a reductive algebraic group $G$. We assume that condition~\eqref{eqn:enough-flats} holds on $X$ and $Y$, and we let $\Omega$ be an object of $\calD^b \Coh^G(Y)$ whose image in $\calD^b \Coh(Y)$ is a dualizing complex. We let $\pi: X \to Y$ be a $G$-equivariant morphism. Then $\pi^! \Omega$ is an object of $\calD^b \Coh^G(X)$ whose image in $\calD^b \Coh(X)$ is a dualizing complex.

Consider a $G$-equivariant vector bundle $E$ on $Y$, and let $F_1, F_2 \subset E$ be $G$-equivariant subbundles. Consider also $E^X:=E \times_Y X$, which is a $G$-equivariant vector bundle on $X$, and the subbundles $F_i^X:=F_i \times_Y X \subset E^X$ ($i=1,2$). If $\calE, \, \calF_1, \, \calF_2$ are the respective sheaves of sections of $E, \, F_1, \, F_2$, then $\pi^* \calE, \, \pi^* \calF_1, \, \pi^* \calF_2$ are the sheaves of sections of $E^X, \, F_1^X, \, F_2^X$, respectively. Out of these data we define the complexes $\calX_X$ and $\calX_Y$ as in \S\ref{ss:lkd-equivariant}, and then the dg-algebras $\calT_X$, $\calS_X$, $\calR_X$ and $\calT_Y$, $\calS_Y$, $\calR_Y$. Note that we have natural isomorphisms of dg-algebras
\[
\calT_X \cong \pi^* \calT_Y, \quad \calS_X \cong \pi^* \calS_Y, \quad \calR_X \cong \pi^* \calR_Y.
\]
We define the categories
\begin{align*}
\calD_{G \times \Gm}^{\mathrm{c}}(F_1 \, \rcap_E \, F_2), & \quad \calD_{G \times \Gm}^{\mathrm{c}}(F_1^{\bot} \, \rcap_{E^*} \, F_2^{\bot}) \\
\calD_{G \times \Gm}^{\mathrm{c}}(F_1^X \, \rcap_{E^X} \, F_2^X), & \quad \calD_{G \times \Gm}^{\mathrm{c}}((F_1^X)^{\bot} \, \rcap_{(E^X)^*} \, (F_2^X)^{\bot})
\end{align*}
as in \S\ref{ss:lkd-equivariant}. Then by Theorem~\ref{thm:lkd-equivariant} there are equivalences of categories
\begin{align*}
\kappa^X_{\pi^! \Omega} : \calD^{\mathrm{c}}_{G \times \Gm}(F_1^X \, \rcap_{E^X} \, F_2^X) \ & \xrightarrow{\sim} \ \calD^{\mathrm{c}}_{G \times \Gm}((F_1^X)^{\bot} \, \rcap_{(E^X)^*} \, (F_2^X)^{\bot})^\op, \\
\kappa^Y_\Omega : \calD^{\mathrm{c}}_{G \times \Gm}(F_1 \, \rcap_E \, F_2) \ & \xrightarrow{\sim} \ \calD^{\mathrm{c}}_{G \times \Gm}(F_1^{\bot} \, \rcap_{E^*} \, F_2^{\bot})^\op.
\end{align*}
If $X$ and $Y$ are smooth varieties, then $\Omega$ is a shift of a line bundle, so that $\pi^* \Omega$ is also a dualizing complex on $X$. Hence under this condition we also have an equivalence
\[
\kappa^X_{\pi^* \Omega} : \calD^{\mathrm{c}}_{G \times \Gm}(F_1^X \, \rcap_{E^X} \, F_2^X) \ \xrightarrow{\sim} \ \calD^{\mathrm{c}}_{G \times \Gm}((F_1^X)^{\bot} \, \rcap_{(E^X)^*} \, (F_2^X)^{\bot})^\op.
\]

The morphism of schemes $\pi$ induces a morphism of dg-schemes
\[
\hat{\pi} : F_1^X \, \rcap_{E^X} \, F_2^X \to F_1 \, \rcap_E \, F_2.
\]
This morphism can be represented by the natural morphism of dg-ringed spaces $(X, \, \calT_X) \to (Y, \, \calT_Y)$. We have derived functors $R\hat{\pi}_*$ and $L\hat{\pi}^*$ for this morphism by the constructions of \S\S\ref{ss:inverse-image-equiv}--\ref{ss:direct-image-equiv}. Note in particular that $\calD_{G \times \Gm}^{\mathrm{c}}(F_1^X \, \rcap_{E^X} \, F_2^X)$ is a subcategory of $\calD^+(\calT_X\Mod^G)$, so that $R\hat{\pi}_*$ is defined on this category.

As in \cite{MR2}, we will say that $\pi$ \emph{has finite Tor-dimension} if for any $\calF$ in $\QCoh(Y)$, the object $Lf^* \calF$ of $\calD\QCoh(X)$ has bounded cohomology.

\begin{lem}
\begin{enumerate}
\item
Assume $\pi$ has finite Tor-dimension. Then the functor
\[
L \hat{\pi}^* : \calD_{G \times \Gm}^{\mathrm{c}}(F_1 \, \rcap_E \, F_2) \ \to \ \calD(\calT_X\Mod^G)
\]
takes values in $\calD_{G \times \Gm}^{\mathrm{c}}(F_1^X \, \rcap_{E^X} \, F_2^X)$.
\item
Assume $\pi$ is proper. Then the functor
\[
R \hat{\pi}_* : \calD_{G \times \Gm}^{\mathrm{c}}(F_1^X \, \rcap_{E^X} \, F_2^X) \ \to \ \calD(\calT_Y \Mod^G)
\]
takes values in $\calD_{G \times \Gm}^{\mathrm{c}}(F_1 \, \rcap_E \, F_2)$.
\end{enumerate}
\end{lem}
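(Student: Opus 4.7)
The plan is to reduce both statements to their non-equivariant counterparts from \cite{MR2}, via the compatibility of $L\hat\pi^*$ and $R\hat\pi_*$ with the forgetful functors established in \S\ref{ss:inverse-image-equiv} and \S\ref{ss:direct-image-equiv}. The key observation is that membership in the subcategory $\calD^{\fg}$ is reflected by the forgetful functor: a $G \times \Gm$-equivariant $\calH^\bullet(\calT_X)$-module is locally finitely generated if and only if its underlying $\calH^\bullet(\calT_X)$-module is, since coherence of an equivariant sheaf coincides with coherence of its underlying sheaf, and the cohomology sheaf of an equivariant dg-module carries an obvious $G$-equivariant structure which is lost and recovered under $\For$.

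For part (1), given $\calM$ in $\calD^{\mathrm{c}}_{G \times \Gm}(F_1 \, \rcap_E \, F_2)$, the commutative diagram~\eqref{eqn:diagram-inverse-image} yields a natural isomorphism $\For(L\hat\pi^* \calM) \cong L\hat\pi^*(\For \calM)$ in $\calD(\calT_X\Mod)$. Under the finite Tor-dimension assumption, the non-equivariant analogue of the present lemma proved in \cite{MR2} ensures that $L\hat\pi^*(\For \calM)$ lies in $\calD^{\mathrm{c}}_{\Gm}(F_1^X \, \rcap_{E^X} \, F_2^X)$; by the detection observation above, $L\hat\pi^* \calM$ then belongs to $\calD^{\mathrm{c}}_{G \times \Gm}(F_1^X \, \rcap_{E^X} \, F_2^X)$. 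Part (2) is proved by the same argument, replacing $L\hat\pi^*$ with $R\hat\pi_*$ and invoking the commutative diagram of Proposition~\ref{prop:direct-image-equiv}(2); note that the bounded-below hypothesis required to apply $R\hat\pi_*$ is automatic since $\calD^{\mathrm{c}}_{G \times \Gm}(F_1^X \, \rcap_{E^X} \, F_2^X) \subseteq \calD^+(\calT_X\Mod^G)$, as already noted in the excerpt preceding the lemma.

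I do not expect any genuine obstacle: the entire argument is a formal ``forgetful'' reduction. The only points to verify with care are that the non-equivariant base change results of \cite{MR2} apply verbatim in our setting and that properness (respectively finite Tor-dimension) of $\pi$ is inherited by the underlying morphism of dg-ringed spaces $(X,\calT_X) \to (Y,\calT_Y)$. The latter is tautological, since both properties depend only on the underlying morphism of schemes $\pi : X \to Y$, which is unchanged by passage to the dg-structure and to the $G$-equivariant framework.
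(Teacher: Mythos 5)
Your argument is correct and is essentially identical to the paper's proof: both reduce to the non-equivariant statement of \cite[Lemma 3.1.2]{MR2} via the commutativity of diagram~\eqref{eqn:diagram-inverse-image} for part (1) and of the diagram in Proposition~\ref{prop:direct-image-equiv} for part (2), using that the finite-generation condition defining $\calD^{\mathrm{c}}$ is detected by the forgetful functor. The paper states this in two sentences; your write-up merely makes the detection step explicit.
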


\begin{proof}
Statement $(1)$ follows from its non-equivariant analogue (see \cite[Lemma 3.1.2]{MR2}) and the commutativity of diagram~\eqref{eqn:diagram-inverse-image}. The proof of $(2)$ is similar, using again \cite[Lemma 3.1.2]{MR2} and the commutativity of the diagram in Proposition~\ref{prop:direct-image-equiv}.
\end{proof}

Similarly, $\pi$ induces a morphism of dg-schemes
\[
\tilde{\pi} : (F_1^X)^{\bot} \, \rcap_{(E^X)^*} \, (F_2^X)^{\bot} \to F_1^{\bot} \, \rcap_{E^*} \, F_2^{\bot}
\]
hence, if $\pi$ has finite Tor-dimension, a functor
\[
L \tilde{\pi}^* : \calD_{G \times \Gm}^{\mathrm{c}}(F_1^{\bot} \, \rcap_{E^*} \, F_2^{\bot}) \ \to \ \calD_{G \times \Gm}^{\mathrm{c}}((F_1^X)^{\bot} \, \rcap_{(E^X)^*} \, (F_2^X)^{\bot}), \]
and, if $\pi$ is proper, a functor
\[
R \tilde{\pi}_* : \calD_{G \times \Gm}^{\mathrm{c}}((F_1^X)^{\bot} \, \rcap_{(E^X)^*} \, (F_2^X)^{\bot}) \ \to \ \calD_{G \times \Gm}^{\mathrm{c}}(F_1^{\bot} \, \rcap_{E^*} \, F_2^{\bot}).
\]

\begin{prop} 
\label{prop:basechange}
\begin{enumerate}
\item
If $\pi$ is proper, there exists a natural isomorphism of functors
\[
\kappa^Y_\Omega \circ R\hat{\pi}_* \ \cong \ R\tilde{\pi}_* \circ \kappa^X_{\pi^! \Omega} \quad : \calD_{G \times \Gm}^{\mathrm{c}}(F_1^X \, \rcap_{E^X} \, F_2^X) \to \calD_{G \times \Gm}^{\mathrm{c}}(F_1^{\bot} \, \rcap_{E^*} \, F_2^{\bot})^\op.
\]
\item
If $X$ and $Y$ are smooth varieties, then there exists a natural isomorphism of functors
\[
L\tilde{\pi}^* \circ \kappa^Y_\Omega \ \cong \ \kappa^X_{\pi^* \Omega} \circ L\hat{\pi}^* \quad : \calD_{G \times \Gm}^{\mathrm{c}}(F_1 \, \rcap_{E} \, F_2) \to \calD_{G \times \Gm}^{\mathrm{c}}((F_1^X)^{\bot} \, \rcap_{(E^X)^*} \, (F_2^X)^{\bot})^\op.
\]
\end{enumerate}
\end{prop}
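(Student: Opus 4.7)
The plan is to deduce both parts from their non-equivariant analogues (already established in \cite{MR2}) via the compatibility of each functor involved with the forgetful functor $\For$, and then to transfer the resulting canonical natural transformation across the equivalence of morphism spaces supplied by Lemma~\ref{lem:morphisms-dg-equiv}. In this sense the proof is almost entirely formal.

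For part (1), I would first assemble the relevant compatibilities. By Theorem~\ref{thm:lkd-equivariant}, the equivariant Koszul duality equivalences $\kappa^X_{\pi^!\Omega}$ and $\kappa^Y_\Omega$ intertwine $\For$ with their non-equivariant counterparts $\kappa^X_{\pi^!\Omega}$ and $\kappa^Y_\Omega$ (without superscript). By Proposition~\ref{prop:direct-image-equiv}, applied both to the dg-morphism $\hat\pi$ (with dg-algebras $\calT_X, \calT_Y$) and to $\tilde\pi$ (with dg-algebras $\calR_X, \calR_Y$), the equivariant derived direct images $R\hat\pi_*$ and $R\tilde\pi_*$ intertwine $\For$ with their non-equivariant versions. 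Stacking these four compatibilities, the two equivariant composites $\kappa^Y_\Omega \circ R\hat\pi_*$ and $R\tilde\pi_* \circ \kappa^X_{\pi^!\Omega}$ both commute with $\For$, and after applying $\For$ coincide (respectively) with the two sides of the non-equivariant isomorphism proved in \cite{MR2}. That non-equivariant natural transformation is constructed purely from canonical data attached to the $G$-equivariant morphism $\pi$ (adjunction units and counits, derived base-change, the reflexivity isomorphism $\Id \xrightarrow{\sim} \mathrm{D}_{\pi^!\Omega}^{\calT} \circ \mathrm{D}_{\pi^!\Omega}^{\calT}$, and the projection formula). Hence, viewed in $\Hom_{\calD^\bc(\calR_Y\Mod)}(-,-)$, it is $G$-invariant as a natural transformation of non-equivariant functors, so by Lemma~\ref{lem:morphisms-dg-equiv} it is the image under $\For$ of a unique natural isomorphism of equivariant functors, which is the asserted $\kappa^Y_\Omega \circ R\hat\pi_* \cong R\tilde\pi_* \circ \kappa^X_{\pi^!\Omega}$.

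Part (2) is handled by the same recipe. The commutative square~\eqref{eqn:diagram-inverse-image} shows that $L\hat\pi^*$ and $L\tilde\pi^*$ are compatible with $\For$, and the smoothness hypothesis ensures that $\pi^*\Omega$ (not only $\pi^!\Omega$) is again dualizing, so that the equivalence $\kappa^X_{\pi^*\Omega}$ exists and matches the non-equivariant one under $\For$ by Theorem~\ref{thm:lkd-equivariant}. The non-equivariant isomorphism $L\tilde\pi^* \circ \kappa^Y_\Omega \cong \kappa^X_{\pi^*\Omega} \circ L\hat\pi^*$ from \cite{MR2} (available because $\pi$ has finite Tor-dimension, smoothness being much stronger) is again assembled from canonical pieces, hence $G$-invariant, and lifts via Lemma~\ref{lem:morphisms-dg-equiv}.

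The main technical point to verify is the $G$-invariance of the non-equivariant natural isomorphisms supplied by \cite{MR2}. Concretely, this requires checking that each elementary building block appearing in its construction — the derived base-change map for $R\pi_*$ and $L\pi^*$, the comparison of $L\pi^*$ with the extension-of-scalars functor along $\calT_Y \to \calT_X \cong \pi^*\calT_Y$, the duality swap for $\mathrm{D}_\Omega^\calT$, and the canonical identification $\pi^!\Omega \cong \pi^*\Omega \otimes \omega_{X/Y}[\dim]$ in the smooth case — is produced by a universal construction from the $G$-equivariant datum of $\pi$. Granted this (which is largely an inspection of the constructions in \cite{MR2} with equivariance kept track of throughout, made possible by the results of Section~\ref{sec:G-equiv}), Lemma~\ref{lem:morphisms-dg-equiv} supplies the equivariant isomorphisms without further work.
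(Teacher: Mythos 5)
Your proposal is correct and follows essentially the same route as the paper: the paper likewise deduces both parts from the non-equivariant isomorphisms of \cite[Proposition 3.4.2]{MR2}, invoking the compatibility of $\kappa^Y_\Omega$, $\kappa^X_{\pi^!\Omega}$, $R\hat{\pi}_*$, $R\tilde{\pi}_*$ (and the inverse-image functors) with the forgetful functor via Theorem~\ref{thm:lkd-equivariant} and Proposition~\ref{prop:direct-image-equiv}, and then lifting the canonical (hence $G$-invariant) non-equivariant isomorphism through Lemma~\ref{lem:morphisms-dg-equiv}, exactly as in the proof of Corollary~\ref{cor:duality-equiv}. Your explicit discussion of why the building blocks are $G$-invariant is a point the paper leaves implicit, but it is the same argument.
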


\begin{proof}
$(1)$ 
In~\cite[Proposition 3.4.2]{MR2} we have proved a similar isomorphism in the non-equivariant setting. As in the proof of Corollary~\ref{cor:duality-equiv}, the equivariant case follows, using Lemma~\ref{lem:morphisms-dg-equiv}. (Here we also use the compatibility of the functors $\kappa^Y_\Omega$, $R\hat{\pi}_*$, $R\tilde{\pi}_*$ and $\kappa^X_{\pi^! \Omega}$ with their non-equivariant analogues, see Proposition~\ref{prop:direct-image-equiv} and Theorem~\ref{thm:lkd-equivariant}.)

$(2)$
The proof is similar.
\end{proof}

\section{Linear Koszul Duality and Convolution}
\label{sec:convolution}


From now on we will specialize to a particular geometric situation suitable to convolution algebras (and use slightly different notation). We fix a smooth and proper complex algebraic variety $X$, endowed with an action of a reductive algebraic group $G$. Note that condition~\eqref{eqn:enough-flats} is satisfied on any such variety by \cite[Proposition 5.1.26]{CG}.

\subsection{First description of convolution}
\label{ss:defconvolution}

Let $V$ be a finite dimensional $G$-module, and $F \subset E:= V \times X$ a $G$-equivariant subbundle of the trivial vector bundle with fiber $V$ over X. We denote by $\calF$ the sheaf of sections of $F$. Let $\Delta V \subset V \times V$ be the diagonal. We will apply the constructions of \S\ref{ss:lkd-equivariant} to the $G$-equivariant vector bundle $E \times E$ over $X \times X$ (for the diagonal $G$-action) and the $G$-stable subbundles $\Delta V \times X \times X$ and $F \times F$. Note that the derived intersection
\[
(\Delta V \times X \times X) \, \rcap_{E \times E} \, (F \times F)
\]
is quasi-isomorphic to the derived fiber product $F \, {\stackrel{_R}{\times}}_V \, F$ in the sense of \cite[\S 3.7]{BR}.

We want to define a convolution product on the category $
\calD^{\mathrm{c}}_{G \times \Gm} \bigl( (\Delta V \times X \times X) \, \rcap_{E \times E} \, (F \times F) \bigr)$.
More concretely, by definition (see \S\ref{ss:lkd-equivariant}) we have
\begin{equation}
\label{eqn:dg-alg}
\calD^{\mathrm{c}}_{G \times \Gm} \bigl( (\Delta V \times X \times X) \, \rcap_{E \times E} \, (F \times F) \bigr) \ \cong \ \calD^\fg \bigl( \mathrm{S}_{\calO_{X \times X}}(\calF^{\vee} \boxplus \calF^{\vee}) \otimes_{\C} (\bigwedge V^*)\Mod^G \bigr)
\end{equation}
where on the right hand side $V^*$ is identified with the orthogonal of $\Delta V$ in $V \times V$, \emph{i.e.}~with the anti-diagonal copy of $V^*$ in $V^* \times V^*$, and the differential is induced by the morphism $V^* \otimes_\C \calO_{X^2} \to \calF^{\vee} \boxplus \calF^{\vee}$ induced by the morphism $F \times F \hookrightarrow E \times E \twoheadrightarrow \bigl( (V \times V)/\Delta V \bigr) \times X^2$.

For $(i,j)=(1,2)$, $(2,3)$ or $(1,3)$ we have the projection $p_{i,j} : X^3 \to X^2$ on the $i$-th and $j$-th factors. There are associated morphisms of dg-schemes
\[
\widehat{p_{1,2}} : (\Delta V \times X^3) \, \rcap_{E \times E \times X} \, (F \times F \times X) \to (\Delta V \times X^2) \, \rcap_{E \times E} \, (F \times F),
\]
$\widehat{p_{2,3}}$, $\widehat{p_{1,3}}$, and functors $L(\widehat{p_{1,2}})^*$, $L(\widehat{p_{2,3}})^*$, $R(\widehat{p_{1,3}})_*$ (see \S\ref{ss:base-change-lkd}; in this setting $E \times E \times X$ is considered as a vector bundle over $X^3$). For $i=1,2,3$ we also denote by $p_i : X^3 \to X$ the projection on the $i$-th factor.

Next we consider a bifunctor
\begin{multline}
\label{eq:bifunctor}
\calC\bigl( \mathrm{S}_{\calO_{X^3}}(p_{1,2}^* (\calF^{\vee} \boxplus \calF^{\vee})) \otimes_{\C} ( \bigwedge V^*) \Mod^G \bigr) \times \calC \bigl( \mathrm{S}_{\calO_{X^3}}(p_{2,3}^* (\calF^{\vee} \boxplus \calF^{\vee})) \otimes_{\C} ( \bigwedge V^*) \Mod^G \bigr) \\
\to \ \calC \bigl( \mathrm{S}_{\calO_{X^3}}(p_{1,3}^* (\calF^{\vee} \boxplus \calF^{\vee})) \otimes_{\C} (\bigwedge V^*) \Mod^G \bigr).
\end{multline}
Here, in the first category the dg-algebra is the image under $p_{1,2}^*$ of the dg-algebra appearing in \eqref{eqn:dg-alg}, so that $\mathrm{S}_{\calO_{X^3}}(p_{1,2}^* (\calF^{\vee} \boxplus \calF^{\vee})) \otimes_{\C} ( \bigwedge V^*)$ is the structure sheaf of $(\Delta V \times X^3) \, \rcap_{E \times E \times X} \, (F \times F \times X)$. Similarly, the second category corresponds to the dg-scheme $(\Delta V \times X^3) \, \rcap_{X \times E \times E} \, (X \times F \times F)$, and the third one to the dg-scheme $(\Delta V \times X^3) \, \rcap_{E \times X \times E} \, (F \times X \times F)$. The bifunctor~\eqref{eq:bifunctor} takes the dg-modules $\calM_1$ and $\calM_2$ to the dg-module $\calM_1 \otimes_{\mathrm{S}_{\calO_{X^3}}(p_2^* \calF^{\vee})} \calM_2$, where the action of $\mathrm{S}_{\calO_{X^3}}(p_{1,3}^*(\calF^{\vee} \boxplus \calF^{\vee}))$ is the natural one (\emph{i.e.}~we forget the action of the middle copy of $\mathrm{S}_{\calO_X}(\calF^{\vee})$). To define the action of $\bigwedge V^*$, we remark that $\calM_1 \otimes_{\mathrm{S}_{\calO_{X^3}}(p_2^* \calF^{\vee})} \calM_2$ has a natural action of the dg-algebra $(\bigwedge V^*) \otimes_{\C} (\bigwedge V^*)$, which restricts to an action of $\bigwedge V^*$ via the algebra morphism $\bigwedge V^* \to (\bigwedge V^*) \otimes_{\C} (\bigwedge V^*)$ which sends an element $x \in V^*$ to $x \otimes 1 + 1 \otimes x$.

The bifunctor~\eqref{eq:bifunctor} has a derived bifunctor (which can be computed by means of $K$-flat resolutions, see Lemma \ref{lem:Kflats}), which induces a bifunctor denoted $(- \, \lotimes_{F^3} \, -)$:
\begin{multline*}
\calD^{\fg} \bigl( \mathrm{S}_{\calO_{X^3}}(p_{1,2}^*(\calF^{\vee} \boxplus \calF^{\vee})) \otimes_{\C} ( \bigwedge V^*) \Mod^G \bigr) \times \calD^\fg \bigl( \mathrm{S}_{\calO_{X^3}}(p_{2,3}^*( \calF^{\vee} \boxplus \calF^{\vee})) \otimes_{\C} ( \bigwedge V^*) \Mod^G \bigr) \\
\to \ \calD^\fg \bigl( \mathrm{S}_{\calO_{X^3}}(p_{1,3}^* (\calF^{\vee} \boxplus \calF^{\vee})) \otimes_{\C} (\bigwedge V^*) \Mod^G \bigr).
\end{multline*}
(This follows from the fact that the composition $F \times_V F \times_V F \hookrightarrow E \times E \times E \twoheadrightarrow E \times X \times E$ is proper, using arguments similar to those in the proof of \cite[Lemma 2.3.1]{MR2}; see also Lemma~\ref{lem:direct-image-Phi}).

Finally, we obtain a convolution product
\begin{multline*}
(- \star -) : \ \calD^{\mathrm{c}}_{G \times \Gm} \bigl( (\Delta V \times X \times X) \, \rcap_{E \times E} \, (F \times F) \bigr) \times \calD^{\mathrm{c}}_{G \times \Gm} \bigl( (\Delta V \times X \times X) \, \rcap_{E \times E} \, (F \times F) \bigr) \\
\to \ \calD^{\mathrm{c}}_{G \times \Gm} \bigl( (\Delta V \times X \times X) \, \rcap_{E \times E} \, (F \times F) \bigr)
\end{multline*}
defined by the formula
\[
\calM_1 \star \calM_2 \ := \ R(\widehat{p_{1,3}})_* \bigl( L(\widehat{p_{1,2}})^* \calM_2 \, \lotimes_{F^3} \, L(\widehat{p_{2,3}})^* \calM_1 \bigr).
\]
This convolution is associative in the natural sense. (We leave this verification to the reader; it will not be used in the paper.)

There is a natural $G \times \Gm$-equivariant ``projection''
\[
p: (\Delta V \times X \times X) \, \rcap_{E \times E} \, (F \times F) \to F \times F
\]
corresponding to the morphism of $\calO_{X^2}$-dg-algebras
\[
\mathrm{S}_{\calO_{X^2}}(\calF^{\vee} \boxplus \calF^{\vee}) \to \mathrm{S}_{\calO_{X^2}}(\calF^{\vee} \boxplus \calF^{\vee}) \otimes_{\C} ( \bigwedge V^*),
\]
and an associated direct image functor $Rp_*$. The essential image of $Rp_*$ lies in the full subcategory $\calD^b_{\mathrm{prop}} \Coh^G(F \times F)$ of $\calD^b \Coh^G(F \times F)$ whose objects are the complexes whose support is contained in a subvariety $Z \subset F \times F$ such that both projections $Z \to F$ are proper. This category $\calD^b_{\mathrm{prop}} \Coh^G(F \times F)$ has a natural convolution product (see \emph{e.g.}~\cite[\S 1.2]{R1}), and the functor
\[
Rp_* : \calD^{\mathrm{c}}_{G \times \Gm} \bigl( (\Delta V \times X \times X) \, \rcap_{E \times E} \, (F \times F) \bigr) \to \calD^b_{\mathrm{prop}} \Coh^G(F \times F)
\]
is compatible with the two convolution products.

\subsection{Alternative description}
\label{ss:defconvolution2}

Before studying the compatibility of convolution with linear Koszul duality we give an alternative description of the convolution bifunctor, in terms of functors considered in Section \ref{sec:LKD}. Consider the morphism
\[
i : \left\{
\begin{array}{ccc}
X^3 & \hookrightarrow & X^4 \\
(x,y,z) & \mapsto & (x,y,y,z)
\end{array}
\right. ,
\]
and the vector bundle $E^4$ over $X^4$. In \S\ref{ss:base-change-lkd} we have defined a ``base change'' functor
\begin{multline*}
L\hat{i}^* : \calD^{\mathrm{c}}_{G \times \Gm} \bigl( (\Delta V \times \Delta V \times X^4) \, \rcap_{E^4} \, F^4 \bigr)
\\ \to \ \calD^{\mathrm{c}}_{G \times \Gm} \bigl( (\Delta V \times \Delta V \times X^3) \, \rcap_{E \times (E \times_X E) \times E} \, (F \times (F \times_X F) \times F) \bigr).
\end{multline*} 
Next, consider the inclusion of vector subbundles of $E \times (E \times_X E) \times E$ (over $X^3$)
\[
F \times F^{\mathrm{diag}} \times F \ \hookrightarrow \ F \times (F \times_X F) \times F,
\]
where $F^{\mathrm{diag}} \subset F \times_X F$ is the diagonal copy of $F$. In \S\ref{ss:subbundles} we have defined a functor
\begin{multline*}
Lf^* : \calD^{\mathrm{c}}_{G \times \Gm} \bigl( (\Delta V \times \Delta V \times X^3) \, \rcap_{E \times (E \times_X E) \times E} \, (F \times (F \times_X F) \times F) \bigr) \\
\to \ \calD^{\mathrm{c}}_{G \times \Gm} \bigl( (\Delta V \times \Delta V \times X^3) \, \rcap_{E \times (E \times_X E) \times E} \, (F \times F^{\mathrm{diag}} \times F) \bigr).
\end{multline*} 
Consider then the morphisms of vector bundles over $X^3$
\[
\phi_1 :  E^3 \cong V^3 \times X^3 \to E \times X \times E \cong V^2 \times X^3, \quad \phi_2 : E^3 \cong V^3 \times X^3 \ \to \ E \times (E \times_X E) \times E \cong V^4 \times X^3,
\]
induced by the linear maps
\[
\varphi_1 : 
\left\{
\begin{array}{ccc}
V^3 & \to & V^2 \\
(a,b,c) & \mapsto & (a,c)
\end{array}
\right. \quad \text{and} \quad \varphi_2 : 
\left\{
\begin{array}{ccc}
V^3 & \to & V^4 \\
(a,b,c) & \mapsto & (a,b,b,c)
\end{array}
\right. .
\]
Using the constructions of \S\ref{ss:morphisms-lkd} we have associated functors
\begin{gather*}
R(\Phi_1)_* : \calD^{\mathrm{c}}_{G \times \Gm} \bigl( (\Delta^3 V \times X^3) \, \rcap_{E^3} \, F^3 \bigr) \
\to \ \calD^{\mathrm{c}}_{G \times \Gm} \bigl( (\Delta V \times X^3) \, \rcap_{E \times X \times E} \, (F \times X \times F) \bigr), \\
L(\Phi_2)^* : \calD^{\mathrm{c}}_{G \times \Gm} \bigl( (\Delta V \times \Delta V \times X^3) \, \rcap_{E \times (E \times_X E) \times E} \, (F \times F^{\mathrm{diag}} \times F) \bigr) \ \to \ \calD^{\mathrm{c}}_{G \times \Gm} \bigl( (\Delta^3 V \times X^3) \, \rcap_{E^3} \, F^3 \bigr).
\end{gather*}
(Here $\Delta^3 V \subset V^3$ is the diagonal copy of $V$.)
Finally, note that if $\calM_1$, $\calM_2$ are two objects of $\calD^{\mathrm{c}}_{G \times \Gm} \bigl( (\Delta V \times X^2) \, \rcap_{E \times E} \, (F \times F) \bigr)$, then the external tensor product $\calM_2 \boxtimes \calM_1$ is naturally an object of the category $\calD^{\mathrm{c}}_{G \times \Gm} \bigl( (\Delta V \times \Delta V \times X^4) \, \rcap_{E^4} \, F^4 \bigr)$. 

\begin{lem}
For $\calM_1$, $\calM_2$ in $\calD^{\mathrm{c}}_{G \times \Gm} \bigl( (\Delta V \times X^2) \, \rcap_{E \times E} \, (F \times F) \bigr)$, there exists a bifunctorial isomorphism
\begin{equation}
\label{eq:defconvolution2}
\calM_1 \star \calM_2 \ \cong \ R(\widehat{p_{1,3}})_* \circ R(\Phi_1)_* \circ L(\Phi_2)^* \circ Lf^* \circ L\hat{i}^* (\calM_2 \boxtimes \calM_1)
\end{equation}
in $\calD^{\mathrm{c}}_{G \times \Gm} \bigl( (\Delta V \times X^2) \, \rcap_{E \times E} \, (F \times F) \bigr)$.
\end{lem}

\begin{proof}
What we have to construct is a natural isomorphism
\begin{equation}
\label{eqn:isom-bifunctors}
L(\widehat{p_{1,2}})^* \calM_2 \, \lotimes_{F^3} \, L(\widehat{p_{2,3}})^* \calM_1 \ \cong \ R(\Phi_1)_* \circ L(\Phi_2)^* \circ Lf^* \circ L\hat{i}^* (\calM_2 \boxtimes \calM_1).
\end{equation}


Note that the morphism of dg-schemes $\Phi_1$ is associated with the natural morphism of dg-algebras $\mathrm{S}_{\calO_{X^3}}(p_{1,3}^* (\calF^{\vee} \boxplus \calF^{\vee})) \otimes_{\C} (\bigwedge V^*) \to  \mathrm{S}_{\calO_{X^3}}(\calF^{\vee} \boxplus \calF^{\vee} \boxplus \calF^{\vee}) \otimes_{\C} (\bigwedge V^*) \otimes_{\C} (\bigwedge V^*)$ on $X^3$. By construction there exists a bifunctor $(- \underline{\otimes}_{F^3} -)$ taking values in $\calC \bigl( \mathrm{S}_{\calO_{X^3}}(\calF^{\vee} \boxplus \calF^{\vee} \boxplus \calF^{\vee}) \otimes_{\C} (\bigwedge V^*) \otimes_{\C} (\bigwedge V^*) \Mod^G \bigr)$ and such that $(- \otimes_{F^3} -) = (\Phi_1)_* \circ (- \underline{\otimes}_{F^3} -)$. As the functor $(\Phi_1)_*$ is exact, passing to derived functors (and restricting to the appropriate subcategories) we obtain an isomorphism $(- \lotimes_{F^3} -) = R(\Phi_1)_* \circ (- \underline{\lotimes}_{F^3} -)$.

Let us also observe that, by standard properties of composition of left derived functors the composition $(\calM_1,\calM_2) \mapsto L(\widehat{p_{1,2}})^* \calM_2 \underline{\lotimes}_{F^3}  L(\widehat{p_{2,3}})^* \calM_1$ is the derived bifunctor of the corresponding composition $(\calM_1,\calM_2) \mapsto (\widehat{p_{1,2}})^* \calM_2 \underline{\otimes}_{F^3}  (\widehat{p_{2,3}})^* \calM_1$ of non-derived functors. The latter composition can also be described as the composition of the bifunctor $(\calM_1,\calM_2) \mapsto \calM_2 \boxtimes \calM_1$ with the (non-derived) inverse image under the morphism of dg-schemes
\[
\Upsilon : (\Delta^3 V \times X^3) \, \rcap_{E^3} \, F^3 \to (\Delta V \times \Delta V \times X^4) \, \rcap_{E^4} \, F^4
\]
whose underlying morphism of schemes is $i$ and whose underlying morphism of dg-algebras is the morphism
\begin{multline*}
\Bigl(\mathrm{S}(\calF^{\vee}) \boxtimes  \bigl( \mathrm{S}(\calF^{\vee}) \otimes_{\calO_X} \mathrm{S}(\calF^{\vee}) \bigr) \boxtimes \mathrm{S}(\calF^{\vee}) \Bigr) \otimes_{\C} (\bigwedge V^*) \otimes_{\C} (\bigwedge V^*) \to \\ 
\Bigl(\mathrm{S}(\calF^{\vee}) \boxtimes \mathrm{S}(\calF^{\vee}) \boxtimes \mathrm{S}(\calF^{\vee}) \Bigr) \otimes_{\C} (\bigwedge V^*) \otimes_{\C} (\bigwedge V^*)
\end{multline*}
induced by multiplication of the second and third copies of $\mathrm{S}(\calF^{\vee})$. By the same arguments as before, passing to derived functors we obtain an isomorphism $L(\widehat{p_{1,2}})^* \calM_2 \underline{\lotimes}_{F^3}  L(\widehat{p_{2,3}})^* \calM_1 \cong L\Upsilon^*(\calM_2 \boxtimes \calM_1)$. Summarizing, we have obtained a bifunctorial isomorphism
\begin{equation*}
L(\widehat{p_{1,2}})^* \calM_2 \, \lotimes_{F^3} \, L(\widehat{p_{2,3}})^* \calM_1 \ \cong \ R(\Phi_1)_* \circ L\Upsilon^*(\calM_2 \boxtimes \calM_1).
\end{equation*}
Comparing with \eqref{eqn:isom-bifunctors}, we see that it is sufficient to construct an isomorphism of functors
\begin{equation*}
L(\Phi_2)^* \circ Lf^* \circ L\hat{i}^* \ \cong \ L\Upsilon^*.
\end{equation*}
The latter isomorphism follows from the observation that $\Upsilon = \hat{i} \circ f \circ \Phi_2$, and standard results on composition of left derived functors (as e.g.~in \cite[Corollary 3.3.8]{BR}).
\end{proof}

\begin{remark}
\label{rk:qis}
The morphism $\Phi_2 : (\Delta^3 V \times X^3) \, \rcap_{E^3} \, F^3 \to (\Delta V \times \Delta V \times X^3) \, \rcap_{E \times (E \times_X E) \times E} \, (F \times F^{\mathrm{diag}} \times F)$ is a quasi-isomorphism of dg-schemes; in fact the associated morphism of dg-algebras as in \S\ref{ss:subbundles} is even an isomorphism. In particular, $L(\Phi_2)^*$ is an equivalence of categories (see \cite[Proposition 1.3.2]{MR}).
\end{remark}

\subsection{Compatibility with Koszul duality}
\label{ss:compatibilityconvolution}

Consider the situation of \S\S\ref{ss:defconvolution}--\ref{ss:defconvolution2}. We denote by $d_X$ the dimension of $X$, and by $\omega_X$ the canonical line bundle on $X$.

The orthogonal of $F \times F$ in $E \times E$ is $F^{\bot} \times
F^{\bot}$. On the other hand, the orthogonal of $\Delta V \times X^2$
in $E \times E$ is the anti-diagonal $\overline{\Delta} V^* \times X^2 \subset
E^* \times E^*$. There is an automorphism of $E \times E$ sending
$\overline{\Delta} V^* \times X^2$ to $\Delta V^* \times X^2$ and preserving $F^{\bot} \times F^{\bot}$, namely multiplication by $-1$ on the second copy
of $V^*$. Hence composing the linear Koszul duality equivalence of Theorem~\ref{thm:lkd-equivariant}
\[
\kappa : \calD^{\mathrm{c}}_{G \times \Gm} \bigl( (\Delta V \times X
\times X) \, \rcap_{E \times E} \, (F \times F) \bigr)
\ \xrightarrow{\sim} \ \calD^{\mathrm{c}}_{G \times \Gm} \bigl( (\overline{\Delta}
V^* \times X \times X) \, \rcap_{E^* \times E^*} \, (F^{\bot} \times
F^{\bot}) \bigr)^\op
\]
associated with the dualizing complex $\omega_X \boxtimes \calO_X [d_X]$
with the equivalence
\[
\Xi : \calD^{\mathrm{c}}_{G \times \Gm} \bigl( (\overline{\Delta}
V^* \times X \times X) \, \rcap_{E^* \times E^*} \, (F^{\bot} \times
F^{\bot}) \bigr) 
\xrightarrow{\sim} \ \calD^{\mathrm{c}}_{G \times \Gm} \bigl( (\Delta
V^* \times X \times X) \, \rcap_{E^* \times E^*} \, (F^{\bot} \times
F^{\bot}) \bigr)
\]
provides an equivalence
\[
\frakK : \calD^{\mathrm{c}}_{G \times \Gm} \bigl( (\Delta V \times X
\times X) \, \rcap_{E \times E} \, (F \times F) \bigr) \
\xrightarrow{\sim} \ \calD^{\mathrm{c}}_{G \times \Gm} \bigl( (\Delta
V^* \times X \times X) \, \rcap_{E^* \times E^*} \, (F^{\bot} \times
F^{\bot}) \bigr)^\op.
\]
The domain and the codomain of $\frakK$ are both endowed with a convolution product $\star$.

The main result of this section is the following proposition, whose proof is based on the results of \S\S\ref{ss:morphisms-lkd}--\ref{ss:base-change-lkd}.

\begin{prop}
\label{prop:convolution}
The equivalence $\frakK$ is compatible with convolution, \emph{i.e.}~for any
objects $\calM_1$, $\calM_2$ of $\calD^{\mathrm{c}}_{G \times \Gm}
\bigl( (\Delta V \times X \times X) \, \rcap_{E \times E} \, (F \times
F) \bigr)$ there exists a bifunctorial isomorphism
\[
\frakK(\calM_1 \star \calM_2) \
\cong \ \frakK(\calM_1) \star \frakK(\calM_2)
\]
in $\calD^{\mathrm{c}}_{G \times
  \Gm} \bigl( (\Delta V^* \times X \times X) \, \rcap_{E^* \times E^*}
\, (F^{\bot} \times F^{\bot}) \bigr)$.
\end{prop}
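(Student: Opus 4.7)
The plan is to use the alternative description~\eqref{eq:defconvolution2} of the convolution product, expressing $\calM_1 \star \calM_2$ as the composition $R(\widehat{p_{1,3}})_* \circ R\Phi_* \circ Lf^* \circ L\hat{i}^* (\calM_2 \boxtimes \calM_1)$. Each of the four functors appearing here is covered by a compatibility result from Section~\ref{sec:LKD}: Proposition~\ref{prop:basechange} for the base changes $L\hat{i}^*$ and $R(\widehat{p_{1,3}})_*$, Proposition~\ref{prop:inclusion} for the subbundle inclusion $Lf^*$, and Proposition~\ref{prop:morphisms} for the bundle morphism $R\Phi_*$. The strategy is to apply $\frakK$ to this composition and to move it successively through each functor, eventually arriving at the analogous four-step composition on the dual side applied to $\frakK(\calM_2) \boxtimes \frakK(\calM_1)$, which by the same alternative description equals $\frakK(\calM_1) \star \frakK(\calM_2)$.

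A preliminary step is to establish a K\"unneth-type compatibility of $\kappa$ with external tensor products: $\kappa(\calN_1 \boxtimes \calN_2) \cong \kappa(\calN_1) \boxtimes \kappa(\calN_2)$ for objects $\calN_1, \calN_2$ in the relevant categories. This follows from the decomposition $\kappa = \xi \circ \overline{\scra}^\bc \circ \mathbf{D}_\Omega^{\calT}$ and the compatibility of each of its three factors with external products, using in particular that the dualizing complex $\omega_X \boxtimes \calO_X[d_X]$ on $X^2$ splits via the K\"unneth formula. After this, one applies in turn Proposition~\ref{prop:basechange}(2) (moving $\frakK$ past $L\hat{i}^*$, with $i : X^3 \to X^4$ a morphism of smooth varieties), Proposition~\ref{prop:inclusion} ($Lf^*$ is replaced by a direct image $Rg_*$ along the Koszul-dual inclusion of orthogonal subbundles), Proposition~\ref{prop:morphisms}(1) ($R\Phi_*$ is replaced by an inverse image $L\Psi^*$ along the transpose $\Psi$ of $\phi$, noting that the induced map between non-derived intersections is a closed embedding and hence proper), and finally Proposition~\ref{prop:basechange}(1) ($R(\widehat{p_{1,3}})_*$ becomes its orthogonal counterpart, since $p_{1,3}$ is proper). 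At each step the hypotheses of the relevant proposition are readily verified from our geometric set-up.

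The most delicate step is the final identification of the resulting composition with the dual convolution. Two issues arise. First, the natural output of $\kappa$ lives in the category associated with the anti-diagonal $\overline{\Delta}V^*$, whereas the dual convolution is defined using the diagonal $\Delta V^*$; the equivalence $\Xi$, which negates the second copy of $V^*$, bridges this gap. Second, the Koszul-dual morphism $\Psi$ produced by Proposition~\ref{prop:morphisms} is the transpose of $\phi$, given fiberwise by $(x,y) \mapsto (x,-x,x,y)$, while the dual convolution is built from the morphism $\phi^\vee$ given by $(\alpha,\beta,\gamma,\delta) \mapsto (\alpha - \beta + \gamma, \delta)$, which has the same formula as $\phi$ but goes in the opposite direction. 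One must therefore check that, after twisting by $\Xi$ and combining with the other dual-side functors, $L\Psi^*$ together with $Rg_*$ reassemble into the composition $R\Phi^{\vee}_* \circ L(f^\vee)^*$ governing the dual convolution. This rests on a direct computation relating the transpose of $\phi$ and the sign-change automorphism $\Xi$, and constitutes the main technical content of the proof.
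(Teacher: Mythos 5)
Your proposal matches the paper's proof essentially step for step: the paper likewise starts from the alternative description~\eqref{eq:defconvolution2}, moves $\kappa$ through the four functors via Propositions~\ref{prop:basechange}, \ref{prop:inclusion} and~\ref{prop:morphisms} (yielding $R(\widetilde{p_{1,3}})_* \circ L\Psi^* \circ Rg_* \circ L\tilde{i}^* \circ \kappa_4$), uses the K\"unneth identity $\kappa_4(\calM_2 \boxtimes \calM_1) \cong \kappa(\calM_2) \boxtimes \kappa(\calM_1)$, and concludes with a direct dg-algebra computation identifying the resulting dual-side composition (after twisting by $\Xi$) with the dual convolution. The delicate final step you flag is exactly the paper's isomorphism~\eqref{eq:isomconvolution}, proved there by exhibiting $L\Psi^*$ and $Rg_*$ as extension/restriction of scalars along explicit morphisms of Koszul-type dg-algebras and using a short exact sequence of two-term complexes together with a $K$-flatness observation.
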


\begin{proof}
To compute the left hand side we use isomorphism~\eqref{eq:defconvolution2}. First, as in \S\ref{ss:defconvolution} we consider the projection $p_{1,3} : X^3 \to X^2$. In \S\ref{ss:base-change-lkd} we have defined functors
\[
R\widehat{p_{1,3}}_* : \calD^{\mathrm{c}}_{G \times \Gm} \bigl( (\Delta V \times X^3) \, \rcap_{E \times X \times E} \, (F \times X \times F) \bigr) \ \to \ \calD^{\mathrm{c}}_{G \times \Gm} \bigl( (\Delta V \times X^2) \, \rcap_{E \times E} \, (F \times F) \bigr),
\]
\vspace{-18pt}
\[
R\widetilde{p_{1,3}}_* : \calD^{\mathrm{c}}_{G \times \Gm} \bigl( (\overline{\Delta} V^* \times X^3) \, \rcap_{E^* \times X \times E^*} \, (F^{\bot} \times X \times F^{\bot}) \bigr) 
\ \to \ \calD^{\mathrm{c}}_{G \times \Gm} \bigl( (\overline{\Delta} V^* \times X^2) \, \rcap_{E^* \times E^*} \, (F^{\bot} \times F^{\bot}) \bigr).
\]
We denote by
\[
\kappa_{1,3} : \calD^{\mathrm{c}}_{G \times \Gm} \bigl( (\Delta V \times X^3) \, \rcap_{E \times X \times E} \, (F \times X \times F) \bigr) 
\ \xrightarrow{\sim} \ \calD^{\mathrm{c}}_{G \times \Gm} \bigl( (\overline{\Delta} V^* \times X^3) \, \rcap_{E^* \times X \times E^*} \, (F^{\bot} \times X \times F^{\bot}) \bigr)^\op
\]
the linear Koszul duality equivalence of Theorem~\ref{thm:lkd-equivariant} associated with the dualizing complex $(p_{1,3})^!(\omega_X \boxtimes \calO_X[d_X]) \cong \omega_X \boxtimes \omega_X \boxtimes \calO_X [2d_X]$. By Proposition~\ref{prop:basechange} we have an isomorphism of functors 
\begin{equation}
\label{eq:isofunctors1}
\kappa \circ R\widehat{p_{1,3}}_* \ \cong \ R\widetilde{p_{1,3}}_* \circ \kappa_{1,3}.
\end{equation}

Next consider, as in \S\ref{ss:defconvolution2}, the inclusion $i : X^3 \hookrightarrow X^4$. In addition to the functor $L\hat{i}^*$, consider
\begin{multline*}
L\tilde{i}^* : \calD^{\mathrm{c}}_{G \times \Gm} \bigl( (\overline{\Delta} V^* \times \overline{\Delta} V^* \times X^4) \, \rcap_{(E^*)^4} \, (F^{\bot})^4 \bigr) \\
\to \ \calD^{\mathrm{c}}_{G \times \Gm} \bigl( (\overline{\Delta} V^* \times \overline{\Delta} V^* \times X^3) \, \rcap_{E^* \times (E^* \times_X E^*) \times E^*} \, (F^{\bot} \times (F^{\bot} \times_X F^{\bot}) \times F^{\bot}) \bigr).
\end{multline*}
We denote by
\[
\kappa_4 : \calD^{\mathrm{c}}_{G \times \Gm} \bigl( (\Delta V \times \Delta V \times X^4) \, \rcap_{E^4} \, F^4 \bigr) \ \xrightarrow{\sim} \ \calD^{\mathrm{c}}_{G \times \Gm} \bigl( (\overline{\Delta} V^* \times \overline{\Delta} V^* \times X^4) \, \rcap_{(E^*)^4} \, (F^{\bot})^4 \bigr)^\op
\]
the linear Koszul duality equivalence associated with the dualizing complex $\omega_X \boxtimes \calO_X \boxtimes \omega_X \boxtimes \calO_X[2d_X]$ on $X^4$, and by
\begin{multline*}
\kappa_3 : \calD^{\mathrm{c}}_{G \times \Gm} \bigl( (\Delta V \times \Delta V \times X^3) \, \rcap_{E \times (E \times_X E) \times E} \, (F \times (F \times_X F) \times F) \bigr) \\ \xrightarrow{\sim} \ \calD^{\mathrm{c}}_{G \times \Gm} \bigl( (\overline{\Delta} V^* \times \overline{\Delta} V^* \times X^3) \, \rcap_{E^* \times (E^* \times_X E^*) \times E^*} \, (F^{\bot} \times (F^{\bot} \times_X F^{\bot}) \times F^{\bot}) \bigr)
\end{multline*}
the linear Koszul duality equivalence associated with the dualizing complex $\omega_X \boxtimes \omega_X \boxtimes \calO_X[2d_X]$ on $X^3$. By Proposition~\ref{prop:basechange} we have an isomorphism of functors
\begin{equation}
\label{eq:isofunctors2}
L\tilde{i}^* \circ \kappa_4 \ \cong \ \kappa_3 \circ L\hat{i}^*.
\end{equation}

As in \S\ref{ss:defconvolution2} again, consider now the inclusion $F \times F^{\mathrm{diag}} \times F \hookrightarrow F \times (F \times_X F) \times F$, and the induced morphisms of dg-schemes
\begin{multline*}
f : (\Delta V \times \Delta V \times X^3) \, \rcap_{E \times (E \times_X E) \times E} \, (F \times F^{\mathrm{diag}} \times F) \ \to \\
(\Delta V \times \Delta V \times X^3) \, \rcap_{E \times (E \times_X E) \times E} \, (F \times (F \times_X F) \times F),
\end{multline*}
\vspace{-20pt}
\begin{multline*}
g : (\overline{\Delta} V^* \times \overline{\Delta} V^* \times X^3) \, \rcap_{E^* \times (E^* \times_X E^*) \times E^*} \, (F^{\bot} \times (F^{\bot} \times_X F^{\bot}) \times F^{\bot}) \ \to \\
(\overline{\Delta} V^* \times \overline{\Delta} V^* \times X^3) \, \rcap_{E^* \times (E^* \times_X E^*) \times E^*} \, (F^{\bot} \times (F^{\mathrm{diag}})^{\bot} \times F^{\bot}).
\end{multline*}
In addition to the functor $Lf^*$, consider the functor
\begin{multline*}
Rg_* : \calD^{\mathrm{c}}_{G \times \Gm} \bigl( (\overline{\Delta} V^* \times \overline{\Delta} V^* \times X^3) \, \rcap_{E^* \times (E^* \times_X E^*) \times E^*} \, (F^{\bot} \times (F^{\bot} \times_X F^{\bot}) \times F^{\bot}) \bigr) \\
\to \ \calD^{\mathrm{c}}_{G \times \Gm} \bigl( (\overline{\Delta} V^* \times \overline{\Delta} V^* \times X^3) \, \rcap_{E^* \times (E^* \times_X E^*) \times E^*} \, (F^{\bot} \times (F^{\mathrm{diag}})^{\bot} \times F^{\bot}) \bigr)
\end{multline*}
defined as in \S\ref{ss:subbundles}. We denote by
\begin{multline*}
\kappa_3' : \calD^{\mathrm{c}}_{G \times \Gm} \bigl( (\Delta V \times \Delta V \times X^3) \, \rcap_{E \times (E \times_X E) \times E} \, (F \times F^{\mathrm{diag}} \times F) \bigr) \\ \xrightarrow{\sim} \ \calD^{\mathrm{c}}_{G \times \Gm} \bigl( (\overline{\Delta} V^* \times \overline{\Delta} V^* \times X^3) \, \rcap_{E^* \times (E^* \times_X E^*) \times E^*} \, (F^{\bot} \times (F^{\mathrm{diag}})^{\bot} \times F^{\bot}) \bigr)^\op
\end{multline*}
the linear Koszul duality equivalence associated with the dualizing complex $\omega_X \boxtimes \omega_X \boxtimes \calO_X[2d_X]$. Then by Proposition~\ref{prop:inclusion} we have an isomorphism of functors
\begin{equation}
\label{eq:isofunctors3}
\kappa_3' \circ Lf^* \ \cong \ Rg_* \circ \kappa_3.
\end{equation}

Finally, we denote by
\[
\kappa_3'' : \calD^{\mathrm{c}}_{G \times \Gm} \bigl( (\Delta^3 V \times X^3) \, \rcap_{E^3} \, F^3 \bigr) \ \xrightarrow{\sim} \ \calD^{\mathrm{c}}_{G \times \Gm} \bigl( ((\Delta^3 V)^\bot \times X^3) \, \rcap_{(E^*)^3} \, (F^{\bot})^3 \bigr)^\op
\]
the linear Koszul duality equivalence associated with the dualizing complex $\omega_X \boxtimes \omega_X \boxtimes \calO_X[2d_X]$. Consider the morphisms of vector bundles $\phi_1$, $\phi_2$
defined in \S\ref{ss:defconvolution2}. By Proposition~\ref{prop:morphisms}, the dual morphisms $\psi_1:={}^{\mathrm{t}}\vspace{-2pt}\phi_1$ and $\psi_2:={}^{\mathrm{t}}\vspace{-2pt}\phi_2$ induce functors
\[
L(\Psi_1)^* : \calD^{\mathrm{c}}_{G \times \Gm} \bigl( ((\Delta^3 V)^\bot \times X^3) \, \rcap_{(E^*)^3} \, (F^{\bot})^3 \bigr) \to \calD^{\mathrm{c}}_{G \times \Gm} \bigl( (\overline{\Delta} V^* \times X^3) \, \rcap_{E^* \times X \times E^*} \, (F^{\bot} \times X \times F^{\bot}) \bigr)
\]
and
\begin{multline*}
R(\Psi_2)_* : \calD^{\mathrm{c}}_{G \times \Gm} \bigl( (\overline{\Delta} V^* \times \overline{\Delta} V^* \times X^3) \, \rcap_{E^* \times (E^* \times_X E^*) \times E^*} \, (F^{\bot} \times (F^{\mathrm{diag}})^{\bot} \times F^{\bot}) \bigr) \\
\to \ \calD^{\mathrm{c}}_{G \times \Gm} \bigl( ((\Delta^3 V)^\bot \times X^3) \, \rcap_{(E^*)^3} \, (F^{\bot})^3 \bigr),
\end{multline*}
and we have isomorphisms of functors
\begin{equation}
\label{eq:isofunctors4}
\kappa_{1,3} \circ R(\Phi_1)_* \ \cong \ L(\Psi_1)^* \circ \kappa_3'' \quad \text{and} \quad \kappa_3'' \circ L(\Phi_2)^* \ \cong \ R(\Psi_2)_* \circ \kappa_3'.
\end{equation}

Combining isomorphisms~\eqref{eq:defconvolution2}, \eqref{eq:isofunctors1}, \eqref{eq:isofunctors2}, \eqref{eq:isofunctors3} and~\eqref{eq:isofunctors4} we obtain, for $\calM_1$ and $\calM_2$ in $\calD^{\mathrm{c}}_{G \times \Gm} \bigl( (\Delta V \times X^2) \, \rcap_{E \times E} \, (F \times F) \bigr)$, bifunctorial isomorphisms
\begin{align*}
\kappa(\calM_1 \star \calM_2) \ & \cong \ \kappa \circ R(\widehat{p_{1,3}})_* \circ R(\Phi_1)_* \circ L(\Phi_2)^* \circ Lf^* \circ L\hat{i}^* (\calM_2 \boxtimes \calM_1) \\
& \cong \ R(\widetilde{p_{1,3}})_* \circ L(\Psi_1)^* \circ R(\Psi_2)_* \circ Rg_* \circ L\tilde{i}^* \circ \kappa_4 (\calM_2 \boxtimes \calM_1). \end{align*}
It is clear by definition that $\kappa_4 (\calM_2 \boxtimes \calM_1) \cong \kappa(\calM_2) \boxtimes \kappa(\calM_1)$ in $\calD^{\mathrm{c}}_{G \times \Gm} \bigl( (\overline{\Delta} V^* \times \overline{\Delta} V^* \times X^4) \, \rcap_{(E^*)^4} \, (F^{\bot})^4 \bigr)$. Hence, using again isomorphism~\eqref{eq:defconvolution2} (but on the dual side) and taking the equivalences $\Xi$ into account, to finish the proof we only have to construct an isomorphism of functors
\begin{equation}
\label{eqn:isom-convolution-LKD}
L(\Psi_1)^* \circ R(\Psi_2)_* \circ Rg_* \ \cong \ R(\Phi'_1)_* \circ L(\Phi'_2)^*\circ L(f')^*
\end{equation}
where the functors are associated with the morphisms of dg-schemes defined as follows. (The functors are well defined thanks to the results of \S\S\ref{ss:morphisms-lkd}--\ref{ss:base-change-lkd}.) The morphism
\begin{multline*}
f' : (\overline{\Delta} V^* \times \overline{\Delta} V^* \times X^3) \, \rcap_{E^* \times (E^* \times_X E^*) \times E^*} \, (F^{\bot} \times (F^{\bot})^{\mathrm{adiag}} \times F^{\bot}) \\
\to (\overline{\Delta} V^* \times \overline{\Delta} V^* \times X^3) \, \rcap_{E^* \times (E^* \times_X E^*) \times E^*} \, (F^{\bot} \times (F^{\bot} \times_X F^{\bot}) \times F^{\bot})
\end{multline*}
is induced by the inclusion of vector subbundles
\[
F^{\bot} \times (F^{\bot})^{\mathrm{adiag}} \times F^{\bot} \hookrightarrow F^{\bot} \times (F^{\bot} \times_X F^{\bot}) \times F^{\bot}
\]
where $(F^{\bot})^{\mathrm{adiag}} \subset E^* \times_X E^*$ is the antidiagonal copy of $F^\bot$. Next, the morphism
\[
\Phi'_2 : (\Delta^3 V^* \times X^3) \, \rcap_{(E^*)^3} \, (F^\bot)^3 \to (\overline{\Delta} V^* \times \overline{\Delta} V^* \times X^3) \, \rcap_{E^* \times (E^* \times_X E^*) \times E^*} \, (F^{\bot} \times (F^{\bot})^{\mathrm{adiag}} \times F^{\bot})
\]
is induced by the linear map $\varphi_2' : (V^*)^3 \to (V^*)^4$ sending $(a,b,c)$ to $(a,-b,b,-c)$. Finally, the morphism
\[
\Phi'_1 : (\Delta^3 V^* \times X^3) \, \rcap_{(E^*)^3} \, (F^\bot)^3 \to (\overline{\Delta} V^* \times X^3) \, \rcap_{E^* \times X \times E^*} (F^\bot \times X \times F^\bot)
\]
is induced by the linear map $\varphi'_1 : (V^*)^3 \to (V^*)^2$ sending $(a,b,c)$ to $(a,-c)$.

To prove isomorphism \eqref{eqn:isom-convolution-LKD} we introduce three more morphisms of dg-schemes. First, the morphism
\[
h : ((\Delta^3 V)^\bot \times X^3) \, \rcap_{(E^*)^3} (F^\bot \times X \times F^\bot) \to ((\Delta^3 V)^\bot \times X^3) \, \rcap_{(E^*)^3} \, (F^{\bot})^3
\]
is induced by the inclusion of vector subbundles $F^\bot \times X \times F^\bot \hookrightarrow (F^{\bot})^3$. (Here $X$ is identified with the zero-section of $E^*$.) Next, the morphism
\[
\Omega : (\overline{\Delta} V^* \times X^3) \, \rcap_{E^* \times X \times E^*} (F^\bot \times X \times F^\bot) \to ((\Delta^3 V)^\bot \times X^3) \, \rcap_{(E^*)^3} (F^\bot \times X \times F^\bot)
\]
is induced by the morphism of vector bundles $\psi_1$. Finally, the morphism
\[
\Theta : (\overline{\Delta} V^* \times \overline{\Delta} V^* \times X^3) \, \rcap_{E^* \times (E^* \times_X E^*) \times E^*} \, (F^{\bot} \times (F^{\bot})^{\mathrm{adiag}} \times F^{\bot}) \to ((\Delta^3 V)^\bot \times X^3) \, \rcap_{(E^*)^3} (F^\bot \times X \times F^\bot)
\]
is induced by the morphism of vector bundles $\psi_2$.

Consider the following diagram:
\[
{\footnotesize
\xymatrix{
 (\overline{\Delta} V^* \times \overline{\Delta} V^* \times X^3) \, \rcap_{E^* \times (E^* \times_X E^*) \times E^*} \, (F^{\bot} \times (F^{\bot} \times_X F^{\bot}) \times F^{\bot}) \ar[r]^-{\Psi_2 \circ g} & ((\Delta^3 V)^\bot \times X^3) \, \rcap_{(E^*)^3} \, (F^{\bot})^3 \\
(\overline{\Delta} V^* \times \overline{\Delta} V^* \times X^3) \, \rcap_{E^* \times (E^* \times_X E^*) \times E^*} \, (F^{\bot} \times (F^{\bot})^{\mathrm{adiag}} \times F^{\bot}) \ar[u]^-{f'} \ar[r]^-{\Theta} & ((\Delta^3 V)^\bot \times X^3) \, \rcap_{(E^*)^3} (F^\bot \times X \times F^\bot) \ar[u]_-{h} \\
\Phi'_1 : (\Delta^3 V^* \times X^3) \, \rcap_{(E^*)^3} \, (F^\bot)^3 \ar[u]^-{\Phi_2'} \ar[r]^-{\Phi_1'} & (\overline{\Delta} V^* \times X^3) \, \rcap_{E^* \times X \times E^*} (F^\bot \times X \times F^\bot). \ar[u]_-{\Omega}
}
}
\]
Both squares in this diagram are cartesian in the (derived) sense of \cite[\S3.7]{BR}, and we have $\Psi_1= h \circ \Omega$. We deduce isomorphisms of functors
\[
L(\Psi_1)^* \circ R(\Psi_2)_* \circ Rg_* \ \cong \ L\Omega^* \circ Lh^* \circ R(\Psi_2 \circ g)_* \ \cong \ L\Omega^* \circ R\Theta_* \circ L(f')^* \ \cong \ R(\Phi'_1)_* \circ L(\Phi'_2)^*\circ L(f')^*.
\]
Here the first isomorphism follows from standard properties of (left and right) derived functors of compositions, and the second and third isomorphisms follow from an equivariant version of \cite[Proposition 3.7.1]{BR} (which can be deduced from the non-equivariant version by the techniques of Section \ref{sec:G-equiv}). This finishes the proof of \eqref{eqn:isom-convolution-LKD}, and so the proof of the proposition.
\end{proof}

\begin{remark}
\label{rk:squareroot}
\begin{enumerate}
\item
As in Remark \ref{rk:qis}, $\Phi_2'$ and $\Omega$ are quasi-isomorphisms of dg-schemes, and are in fact induced by isomorphisms of dg-algebras. Hence the second isomorphism in the last equation of the proof can be checked in an elementary way, without referring to \cite{BR}. Writing all the morphisms explicitly in terms of dg-algebras, one can also check the second isomorphism in this equation elementarily: it is just a statement about $K$-flat extensions and restrictions of scalars.
\item
Assume that the line bundle $\omega_X$ has a $G$-equivariant square root, \emph{i.e.}~there exists a $G$-equivariant line bundle $\omega_X^{1/2}$ on $X$ such that $(\omega_X^{1/2})^{\otimes 2} \cong \omega_X$. Then one can define a modified equivalence $\frakK'$ using the dualizing complex $\omega_X^{-1/2} \boxtimes \omega_X^{-1/2}[d_X]$, without affecting Proposition~\ref{prop:convolution} (nor Proposition~\ref{prop:diagonal} below). This provides a more symmetric definition of $\frakK$ in this case.
\end{enumerate}
\end{remark}


\subsection{Image of the unit}

As in \S\ref{ss:compatibilityconvolution} we consider the equivalence $\frakK$. Let us denote by $q : E^2 \to X^2$ the projection. Consider the structure sheaf of the diagonal copy of $F$ in $E^2$, denoted $\calO_{\Delta F}$. Then $q_* \calO_{\Delta F}$ is an object of the category
\[
\calD^{\mathrm{c}}_{G \times \Gm} \bigl( (\Delta V \times X \times X) \, \rcap_{E \times E} \, (F \times F) \bigr),
\]
where the structure of $\mathrm{S}_{\calO_{X^2}}(\calF^{\vee} \boxplus \calF^{\vee}) \otimes_{\C} (\bigwedge V^*)$-dg-module is given by the composition of $\mathrm{S}_{\calO_{X^2}}(\calF^{\vee} \boxplus \calF^{\vee}) \otimes_{\C} (\bigwedge V^*) \to q_* \calO_{F \times_V F}$ (projection to the $0$-cohomology) and $q_* \calO_{F \times_V F} \to q_* \calO_{\Delta F}$ (restriction). For simplicity, in the rest of this subsection we write $\calO_{\Delta F}$ for $q_* \calO_{\Delta F}$. Similarly we have an object $\calO_{\Delta F^{\bot}}$ in $\calD^{\mathrm{c}}_{G \times \Gm} \bigl( (\Delta V^* \times X \times X) \, \rcap_{E^* \times E^*} \, (F^{\bot} \times F^{\bot}) \bigr)$.

The idea of the proof of the following proposition is, using isomorphisms of functors proved in Propositions~\ref{prop:inclusion} and~\ref{prop:basechange}, to reduce the claim to an explicit and easy computation.

\begin{prop}
\label{prop:diagonal}
We have $\frakK(\calO_{\Delta F}) \cong \calO_{\Delta F^{\bot}}$.
\end{prop}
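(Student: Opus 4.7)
The strategy is to realize $\calO_{\Delta F}$ on $X^2$ as the iterated pushforward of a simple dg-module on $X$, via (a) the diagonal embedding $\pi\colon X \hookrightarrow X^2$ and (b) a subbundle inclusion on $X$, and then to apply the compatibility of $\kappa$ with base change (Proposition~\ref{prop:basechange}(1)) and with subbundle inclusions (Proposition~\ref{prop:inclusion}) in order to reduce the claim to an especially easy explicit case of Koszul duality on $X$.

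Concretely, consider on $X$ the vector bundle $E^X := E \times_X E = V^2 \times X$ and its three $G$-equivariant subbundles $\Delta V \times X$, the diagonal copy $\Delta F$ of $F$, and $F \times_X F$; these satisfy $\Delta F \subset \Delta V \times X$ and $\Delta F \subset F \times_X F$. For the ``degenerate'' pair $(F_1, F_2) = (\Delta V \times X,\, \Delta F)$ on $X$ the defining complexes $\calX = (\overline{\Delta} V^* \otimes_{\C} \calO_X \to \Delta F^{\vee})$ and $\calY = (\Delta F \to V^2/\Delta V)$ both have vanishing differential (because $\Delta F \subset \Delta V \times X$), so the corresponding dg-algebras $\calT^{X,\mathrm{sub}}$ and $\calR^{X,\mathrm{sub}}$ are just graded tensor products of polynomial and exterior algebras with zero differential. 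Set $\calP := \calO_{\Delta F}$, regarded as a $\calT^{X,\mathrm{sub}}$-dg-module via the augmentation $\calT^{X,\mathrm{sub}} \twoheadrightarrow \mathrm{S}(\Delta F^{\vee}) = \calO_{\Delta F}$ killing $\bigwedge^{\geq 1}\overline{\Delta} V^*$. With $f$ the morphism of dg-schemes on $X$ induced by the subbundle inclusion $\Delta F \hookrightarrow F \times_X F$, a direct check from the definitions gives $Rf_*(\calP) \cong \calO_{\Delta F}$ as a dg-module for the ``big'' pair $(\Delta V \times X,\, F \times_X F)$ on $X$, and $R\hat{\pi}_*$ of the latter then delivers the dg-module $\calO_{\Delta F}$ on $X^2$ defined in \S\ref{ss:defconvolution}.

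Combining Propositions~\ref{prop:basechange}(1) (for $\pi$, which is proper of finite Tor-dimension) and~\ref{prop:inclusion} yields
\[
\kappa(\calO_{\Delta F}) \;\cong\; R\tilde{\pi}_* \circ Lg^* \circ \kappa^{X,\mathrm{sub}}(\calP),
\]
where $g$ is the Koszul dual of $f$. The point is that $\kappa^{X,\mathrm{sub}}$ acts on dg-modules over two free dg-algebras with zero differential, hence reduces fibrewise to the classical symmetric/exterior Koszul duality; the ``free over $\mathrm{S}(\Delta F^{\vee})$, trivial over $\bigwedge \overline{\Delta} V^*$'' structure of $\calP$ is exchanged, giving $\kappa^{X,\mathrm{sub}}(\calP) \cong \calO_{\overline{\Delta} V^* \times X}$. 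The functor $Lg^*$ then corresponds to restriction along the subbundle inclusion $\overline{\Delta} F^{\bot} = \overline{\Delta} V^* \times X \,\cap\, (F^{\bot} \times_X F^{\bot}) \hookrightarrow \overline{\Delta} V^* \times X$, so it carries $\calO_{\overline{\Delta} V^* \times X}$ to $\calO_{\overline{\Delta} F^{\bot}}$; pushing forward via $\tilde{\pi}$ produces $\calO_{\overline{\Delta} F^{\bot}}$ on $X^2$, and the final application of $\Xi$ swaps the anti-diagonal for the diagonal to yield $\calO_{\Delta F^{\bot}}$.

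The main obstacle will be the explicit computation of $\kappa^{X,\mathrm{sub}}(\calP)$: although all the algebras involved are free, one must carefully track the bidegree shifts and the contribution of the dualizing complex $\pi^!(\omega_X \boxtimes \calO_X[d_X])$ (to which the construction is sensitive) so that no extraneous cohomological or internal-grading shift survives in the final answer.
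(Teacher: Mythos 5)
Your proposal follows essentially the same route as the paper's proof: realize $\calO_{\Delta F}$ as a pushforward from the diagonal $X \hookrightarrow X^2$ (Proposition~\ref{prop:basechange}) and then from the subbundle inclusion $F^{\mathrm{diag}} \hookrightarrow F \times_X F$ (Proposition~\ref{prop:inclusion}), reducing to an explicit Koszul duality computation on $X$ where all differentials vanish. The only point you leave implicit --- that $Lg^*$ applied to $\kappa^{X,\mathrm{sub}}(\calP)$ is concentrated in degree zero, so that one really obtains the structure sheaf of the non-derived intersection rather than a complex with extra cohomology --- is settled in the paper by the dimension count $\dim(\overline{\Delta} V^* \times X) + \dim(F^\bot \times_X F^\bot) - \dim((F^{\mathrm{diag}})^\bot) = \dim((F^{\bot})^{\mathrm{antidiag}})$.
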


\begin{proof}
Consider the morphism $\delta : X \hookrightarrow X \times X$ (inclusion of the diagonal). We denote by
\[
\kappa_{\delta} : \calD^{\mathrm{c}}_{G \times \Gm} \bigl( (\Delta V \times X) \, \rcap_{E \times_X E} \, (F \times_X F) \bigr) \ \xrightarrow{\sim} \ \calD^{\mathrm{c}}_{G \times \Gm} \bigl( (\overline{\Delta} V^* \times X) \, \rcap_{E^* \times_X E^*} \, (F^{\bot} \times_X F^{\bot}) \bigr)^\op
\]
the linear Koszul duality equivalence associated with the dualizing complex $\delta^!(\omega_X \boxtimes \calO_X [d_X]) \cong \calO_X$ on $X$. By Proposition~\ref{prop:basechange}, there is an isomorphism of functors
\begin{equation}
\label{eq:isomDelta}
\kappa \circ R\hat{\delta}_* \cong R\tilde{\delta}_* \circ \kappa_{\delta},
\end{equation}
where the functors $R\hat{\delta}_*$ and $R\tilde{\delta}_*$ are defined as in \S\ref{ss:base-change-lkd}. 

Consider the object $\mathrm{S}_{\calO_X}(\calF^{\vee})$ of the category
\[
\calD^{\mathrm{c}}_{G \times \Gm} \bigl( (\Delta V \times X) \, \rcap_{E \times_X E} \, (F \times_X F) \bigr) \ \cong \ \calD^\fg \bigl( \Sym(V^* \otimes_{\C} \calO_X \to \calF^{\vee} \oplus \calF^{\vee})\Mod^G \bigr),
\]
where the dg-module structure corresponds to the diagonal inclusion $\calF \hookrightarrow \calF \oplus \calF$. Then by definition $\calO_{\Delta F} \cong R\hat{\delta}_* \bigl( \mathrm{S}_{\calO_X}(\calF^{\vee}) \bigr)$. Hence, using isomorphism~\eqref{eq:isomDelta}, we obtain
\begin{equation}
\label{eq:K(O_D)}
\frakK(\calO_{\Delta F}) \ = \ \Xi \circ \kappa(\calO_{\Delta F}) \ \cong \ \Xi \circ R \tilde{\delta}_* \circ \kappa_{\delta} (\mathrm{S}_{\calO_X}(\calF^{\vee})),
\end{equation}
where $\Xi$ is defined as in \S\ref{ss:defconvolution}.

Now consider the diagonal embedding $F^{\mathrm{diag}} \hookrightarrow F \times_X F$ as in \S\ref{ss:compatibilityconvolution}. This inclusion makes $F^{\mathrm{diag}}$ a subbundle of $E \times_X E$. Taking the derived intersection with $\Delta V \times X$ inside $E \times_X E$, we are in the setting of \S\ref{ss:subbundles}. We consider the morphisms of dg-schemes
\begin{align*}
\underline{f}: (\Delta V \times X) \, \rcap_{E \times_X E} \, F^{\mathrm{diag}} \ & \to \ (\Delta V \times X) \, \rcap_{E \times_X E} \, (F \times_X F), \\
\underline{g}: (\overline{\Delta} V^* \times X) \, \rcap_{E^* \times_X E^*} \, (F^{\bot} \times_X F^{\bot}) \ & \to \ (\overline{\Delta} V^* \times X) \, \rcap_{E^* \times_X E^*} \, (F^{\mathrm{diag}})^{\bot},
\end{align*}
and the diagram:
\[
\xymatrix{
\calD^{\mathrm{c}}_{G \times \Gm}((\Delta V \times X) \, \rcap_{E \times_X E} \, F^{\mathrm{diag}}) \ar[r]^-{\kappa_F}_-{\sim} \ar[d]^-{R(\underline{f})_*} & \calD^{\mathrm{c}}_{G \times \Gm}((\overline{\Delta} V^* \times X) \, \rcap_{E^* \times_X E^*} \, (F^{\mathrm{diag}})^{\bot})^\op \ar[d]^-{L(\underline{g})^*} \\
\calD^{\mathrm{c}}_{G \times \Gm}((\Delta V \times X) \, \rcap_{E \times_X E} \, (F \times_X F)) \ar[r]^-{\kappa_{\delta}}_-{\sim} & \calD^{\mathrm{c}}_{G \times \Gm}((\overline{\Delta} V^* \times X) \, \rcap_{E^* \times_X E^*} \, (F^{\bot} \times_X F^{\bot}))^\op
}
\]
where $\kappa_F$ is the linear Koszul duality equivalence associated with the dualizing complex $\calO_X$ on $X$.
The structure (dg-)sheaf of $(\Delta V \times X) \, \rcap_{E \times_X E} \, F^{\mathrm{diag}}$ is $(\bigwedge V^*) \otimes_{\C} \mathrm{S}_{\calO_X}(\calF^{\vee})$, with trivial differential (because $F^{\mathrm{diag}} \subset \Delta V \times X$). In particular, $\mathrm{S}_{\calO_X}(\calF^{\vee})$ is also an object of the top left category in the diagram, which we denote by $\calO_F$. Then, by definition, $R(\underline{f})_* \calO_F$ is the object $\mathrm{S}_{\calO_X}(\calF^{\vee})$ appearing in~\eqref{eq:K(O_D)}. By Proposition~\ref{prop:inclusion}, there is an isomorphism of functors
\begin{equation}
\label{eqn:isom-again}
\kappa_{\delta} \circ R(\underline{f})_* \ \cong \ L(\underline{g})^* \circ \kappa_F.
\end{equation}
In particular we have $\kappa_{\delta} (\mathrm{S}_{\calO_X}(\calF^{\vee})) \cong L(\underline{g})^* \circ \kappa_F (\calO_F)$.

The object $\kappa_F(\calO_F)$ is the $(\bigwedge \calF) \otimes_{\C} \mathrm{S}(V)$-dg-module $\calO_X \otimes_\C \mathrm{S}(V)$, and the morphism $\underline{g}$ is associated with the morphism of dg-algebras
\[
(\bigwedge \calF) \otimes_{\C} \mathrm{S}(V) \to \bigl( \bigwedge(\calF \oplus \calF) \bigr) \otimes_{\C} \mathrm{S}(V)
\]
on $X$
induced by the diagonal embedding $\calF \hookrightarrow \calF \oplus \calF$, which makes the right hand dg-algebra $K$-flat over the left hand dg-algebra. (Here $V$ is identified with the quotient $(V \oplus V) / \Delta V$ via the morphism $(a,b) \mapsto a-b$.) It follows that $L(\underline{g})^*(\calO_X \otimes_\C \mathrm{S}(V))$ is isomorphic to the (non-derived) tensor product
\[
\bigl( \bigwedge(\calF \oplus \calF)  \otimes_{\C} \mathrm{S}(V) \bigr) \otimes_{(\bigwedge \calF) \otimes_{\C} \mathrm{S}(V)} \bigl( \calO_X \otimes_\C \mathrm{S}(V) \bigr) \cong \bigwedge(\calF \oplus \calF / \calF)  \otimes_{\C} \mathrm{S}(V).
\]
One can easily check that the right-hand side is simply the Koszul complex associated with the inclusion $\calF \hookrightarrow V \otimes \calO_X$, and hence that it is quasi-isomorphic to 
\[
\mathrm{S}_{\calO_X} \bigl( (V \otimes \calO_X) / \calF \bigr) \cong \mathrm{S}_{\calO_X} \bigl( (\calF^\bot)^\vee \bigr).
\]
Using isomorphisms~\eqref{eq:K(O_D)} and~\eqref{eqn:isom-again}, we deduce the isomorphism of the proposition.
\end{proof}


From Proposition~\ref{prop:diagonal} one deduces the following result.

\begin{cor} \label{cor:imagelinebundle}
Let $\calL$ be a $G$-equivariant line bundle on $X$. Then
$\calO_{\Delta F} \otimes_{\calO_X} \calL$ is naturally an object of
$\calD^{\mathrm{c}}_{G \times \Gm} \bigl( (\Delta V \times X \times X)
\, \rcap_{E \times E} \, (F \times F) \bigr)$. We have $\frakK(\calO_{\Delta
  F} \otimes_{\calO_X} \calL) \cong \calO_{\Delta F^{\bot}}
\otimes_{\calO_X} \calL^{\vee}$.
\end{cor}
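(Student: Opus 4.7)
The object $\calO_{\Delta F} \otimes_{\calO_X} \calL$ is understood as $q_* \calO_{\Delta F} \otimes_{\calO_{X^2}} p_1^* \calL$ (equivalently $p_2^* \calL$, since the two pullbacks agree on the diagonal support), with the natural dg-module structure. My plan is to mimic the proof of Proposition~\ref{prop:diagonal} while carrying the line bundle $\calL$ through each reduction, using projection formulae to shuttle it past the various derived functors.

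By the projection formula for $\hat\Delta$, one has $\calO_{\Delta F} \otimes_{\calO_X} \calL \cong R\hat\Delta_*\bigl(\mathrm{S}_{\calO_X}(\calF^\vee) \otimes_{\calO_X} \calL\bigr)$. Combining the base change isomorphism~\eqref{eq:isomDelta}, the identity $\mathrm{S}_{\calO_X}(\calF^\vee) \otimes \calL \cong R(f')_*(\calO_F \otimes \calL)$, and Proposition~\ref{prop:inclusion}, one obtains $\kappa(\calO_{\Delta F} \otimes \calL) \cong R\tilde\Delta_* L(g')^* \kappa_F(\calO_F \otimes \calL)$, exactly as in the proof of Proposition~\ref{prop:diagonal} but with $\calL$ propagated through each step. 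The crucial computation is $\kappa_F(\calO_F \otimes \calL) \cong \kappa_F(\calO_F) \otimes_{\calO_X} \calL^\vee \cong \mathrm{S}(V) \otimes_\C \calL^\vee$. Carrying $\calL^\vee$ back out through $L(g')^*$ (which commutes with tensoring by line bundles on $X$), then through $R\tilde\Delta_*$ via the projection formula (producing $p_1^* \calL^\vee$ on $X^2$), and finally through $\Xi$ (which is the identity on $X^2$ and thus fixes $p_1^* \calL^\vee$), we conclude $\frakK(\calO_{\Delta F} \otimes \calL) \cong \calO_{\Delta F^\bot} \otimes_{\calO_X} \calL^\vee$, invoking Proposition~\ref{prop:diagonal} for the $\calL$-free factor.

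The main obstacle is the identity $\kappa_F(\calN \otimes \calL) \cong \kappa_F(\calN) \otimes \calL^\vee$ for a line bundle $\calL$ pulled back from the base, i.e.\ verifying that linear Koszul duality intertwines a base line-bundle twist with the dual twist. Conceptually this is what one expects from Koszul duality, and it follows from $\mathbf{D}_\Omega^{\calT,G}(\calN \otimes \calL) \cong \mathbf{D}_\Omega^{\calT,G}(\calN) \otimes \calL^\vee$ (a consequence of the standard isomorphism $R\sheafHom(\calN \otimes \calL, -) \cong R\sheafHom(\calN, -) \otimes \calL^\vee$) combined with the fact that the Koszul-type equivalence $\overline{\scra}^\bc$ and the regrading $\xi$ both commute with tensoring by a line bundle pulled back from the base---they treat such a factor as a scalar. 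The verification amounts to routine bookkeeping once the definitions of~\cite{MR2} are unwound.
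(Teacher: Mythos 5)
Your proof is correct and is essentially the deduction the paper intends: the paper offers no argument beyond ``from Proposition~\ref{prop:diagonal} one deduces,'' and threading the line bundle through that proposition's proof is the natural way to carry it out, with the key input being exactly the one you isolate, namely that $\mathbf{D}_\Omega$ converts a twist by $\calL$ into a twist by $\calL^{\vee}$ while the Koszul-type functor $\overline{\scra}$, the regrading, $L(g')^*$ and $R\tilde{\Delta}_*$ all commute with (or intertwine via the projection formula) such twists. A marginally more economical packaging is to apply that same observation once on $X^2$ to obtain the general identity $\frakK(\calM \otimes p_1^*\calL) \cong \frakK(\calM) \otimes p_1^*\calL^{\vee}$ and then quote Proposition~\ref{prop:diagonal} directly, but this is only a repackaging of your argument.
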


\section{Linear Koszul Duality and Iwahori--Matsumoto Involution}
\label{sec:IMinvolution}

\subsection{Contractibility}

Let $X$ be a complex algebraic variety endowed with an action of an algebraic group $G$, and let $\calA$ be a $G \times \Gm$-equivariant sheaf of quasi-coherent dg-algebras on $X$,
bounded and concentrated in non-positive degrees (for the cohomological grading). Assume that $\calH^0(\calA)$ is locally finitely generated as an $\calO_X$-algebra, and that $\calH^\bullet(\calA)$ 
is locally finitely generated as an $\calH^0(\calA)$-module. 
Consider the triangulated category $\calD^\fg(\calA\Mod^{G})$, and let
$\Kth^{G \times \Gm}(\calA)$ be
its Grothen\-dieck group. Let also $\Kth^{G \times \Gm}(\calH^0(\calA))$ be the
Grothendieck group of the abelian category of $G \times \Gm$-equivariant quasi-coherent, locally
finitely generated $\calH^0(\calA)$-modules.

\begin{lem}
\label{lem:Grothendieckgroups}
The natural morphism
\[
\left\{
\begin{array}{ccc}
\Kth^{G \times \Gm}(\calA) & \to & \Kth^{G \times \Gm}(\calH^0(\calA)) \\
{[} \calM] & \mapsto & \sum_{i \in \mathbb{Z}} (-1)^i \cdot [ \calH^i(\calM) ]
\end{array}
\right.
\]
is an isomorphism of abelian groups.
\end{lem}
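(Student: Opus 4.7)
The plan is to construct an explicit inverse and check the two composition identities. First, the stated map is well-defined: since $\calA$ is bounded and $\calH^\bullet(\calA)$ is locally finitely generated over $\calH^0(\calA)$, the graded ring $\calH^\bullet(\calA)$ is bounded, so any $\calM\in\calD^\fg(\calA\Mod^G)$ has only finitely many nonzero $\calH^i(\calM)$, and each $\calH^i(\calM)$ is locally finitely generated over $\calH^0(\calA)$ (pass from locally-fg-over-$\calH^\bullet(\calA)$ to each graded component). Compatibility with distinguished triangles is the long exact sequence in cohomology.

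To construct the inverse $\Psi$, I would exploit that $\calA$ is concentrated in non-positive cohomological degrees: the graded subspace $\calA^{<0}+\Ima(d:\calA^{-1}\to\calA^0)\subset\calA$ is a $G\times\Gm$-equivariant two-sided dg-ideal, so the projection $\pi:\calA\twoheadrightarrow\calH^0(\calA)$ (with $\calH^0(\calA)$ regarded as a dg-algebra concentrated in cohomological degree $0$ with zero differential) is a morphism of $G\times\Gm$-equivariant sheaves of dg-algebras. Restriction of scalars along $\pi$ sends a locally finitely generated equivariant $\calH^0(\calA)$-module to an object of $\calD^\fg(\calA\Mod^G)$ concentrated in cohomological degree $0$; this assignment is exact, hence descends to a group homomorphism
\[
\Psi:\Kth^{G\times\Gm}(\calH^0(\calA))\to\Kth^{G\times\Gm}(\calA).
\]

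The identity $\Phi\circ\Psi=\Id$ is immediate: an object concentrated in cohomological degree $0$ has cohomology only in that degree. The substance is $\Psi\circ\Phi=\Id$, which amounts to the identity
\[
[\calM]=\sum_i(-1)^i[\calH^i(\calM)]
\]
in $\Kth^{G\times\Gm}(\calA)$, where on the right each $\calH^i(\calM)$ is viewed as an $\calA$-dg-module via $\pi$. I would prove this by induction on the number of indices $i$ with $\calH^i(\calM)\ne 0$, using the good truncation. Let $a$ be minimal with $\calH^a(\calM)\ne 0$ and set
\[
(\tau^{\leq a}\calM)^i=\calM^i\ (i<a),\quad (\tau^{\leq a}\calM)^a=\Ker(d:\calM^a\to\calM^{a+1}),\quad (\tau^{\leq a}\calM)^i=0\ (i>a).
\]
This is an $\calA$-sub-dg-module because $\calA^j$ strictly decreases the cohomological degree when $j<0$, while $\calA^0$ preserves $\Ker(d^a)$ since $d$ annihilates $\calA^0$ (as $\calA^1=0$); it is manifestly $G\times\Gm$-stable. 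The short exact sequence $\tau^{\leq a}\calM\hookrightarrow\calM\twoheadrightarrow\tau^{>a}\calM$ gives a distinguished triangle, and both truncations lie in $\calD^\fg(\calA\Mod^G)$ because their cohomologies are respectively a sub- and quotient $\calH^\bullet(\calA)$-module of $\calH^\bullet(\calM)$, and $\calH^\bullet(\calA)$ is locally Noetherian (as $\calH^0(\calA)$ is locally finitely generated over the locally Noetherian ring $\calO_X$, and $\calH^\bullet(\calA)$ is locally finitely generated over $\calH^0(\calA)$). Finally, $\tau^{\leq a}\calM$ has cohomology $\calH^a(\calM)$ concentrated in degree $a$, so the canonical projection $\tau^{\leq a}\calM\to\calH^a(\calM)[-a]$ is a quasi-isomorphism of $\calA$-dg-modules (the target being an $\calA$-module via $\pi$), giving $[\tau^{\leq a}\calM]=(-1)^a[\calH^a(\calM)]$; the induction hypothesis applied to $\tau^{>a}\calM$ closes the argument.

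The main obstacle is bookkeeping of the $G\times\Gm$-equivariant dg-structure in the truncation argument: verifying that $\pi$ is genuinely a map of $G\times\Gm$-equivariant sheaves of dg-algebras, that the good truncation of an $\calA$-dg-module is again an equivariant $\calA$-sub-dg-module (which crucially uses that $\calA$ sits in non-positive degrees), and that the fg condition is preserved under truncation via the local Noetherianity of $\calH^\bullet(\calA)$. Once these are in place, the K-theoretic identity reduces, via induction on cohomological length, to the tautological case of a module concentrated in a single degree.
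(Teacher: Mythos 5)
Your proof is correct and follows essentially the same route as the paper: the inverse is restriction of scalars along $\calA \twoheadrightarrow \calH^0(\calA)$, and the identity $[\calM]=\sum_i(-1)^i[\calH^i(\calM)]$ is established via good truncations, which the paper packages as a finite filtration $\calM_0\subset\cdots\subset\calM_n=\calM$ with $\calM_j=\tau^{\leq a+j}\calM$ rather than as your step-by-step induction. The two presentations are interchangeable, and your added care about equivariance, the dg-ideal $\calA^{<0}+\Ima(d)$, and preservation of the fg condition under truncation only makes explicit what the paper leaves implicit.
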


\begin{proof} 
Every object of $\calD^{\fg}(\calA\Mod^G)$ is
  isomorphic to an object which is a bounded
  $\calA$-dg-module for the cohomological grading. (This follows from the fact that $\calA$ is
  bounded and concentrated in non-positive degrees, using truncation functors, as defined \emph{e.g.}~in
  \cite[\S 2.1]{MR}.) So let $\calM$ be an object of $\calD^{\fg}(\calA\Mod^G)$ such that $\calM^j=0$ for $j \notin
  \llbracket a,b \rrbracket$ for some integers $a<b$. Let $n=b-a$. Consider the
  following filtration of $\calM$ as an $\calA$-dg-module:
\[
\{0\}=\calM_{-1} \subset \calM_0 \subset \calM_1 \subset \cdots \subset \calM_n=\calM,
\] where for $j \in \llbracket 0,n \rrbracket$ we put
\[
\calM_j:=(\cdots \ 0 \to \calM^a \to \cdots \to \calM^{a+j-1} \xrightarrow{d^{a+j-1}} \mathrm{Ker}(d^{a+j}) \to 0 \ \cdots).
\]
Then, in $\Kth^{G \times \Gm}(\calA)$ we have
\[
[ \calM ] = \sum_{j=0}^{n} \, [ \calM_j / \calM_{j-1} ] = \sum_{i \in \mathbb{Z}} \, (-1)^i \cdot [ \calH^i(\calM) ],
\]
where $\calH^i(\calM)$ is considered as an $\calA$-dg-module concentrated in degree $0$. It follows that the natural morphism $\Kth^{G \times \Gm}(\calH^0(\calA)) \to \Kth^{G \times \Gm} (\calA)$, which sends an $\calH^0(\calA)$-module to itself, viewed as an $\calA$-dg-module concentrated in degree $0$, is an inverse to the morphism of the lemma.
\end{proof}

\subsection{Reminder on affine Hecke algebras}
\label{ss:reminderHecke}

Now we assume that $G$ is a connected, simply-connected, complex semi-simple algebraic group. Let $T \subset B \subset G$ be a torus and a Borel subgroup of $G$. Let also $\frakt \subset \frakb \subset \frakg$ be their Lie algebras. Let $U$ be the unipotent radical of $B$, and let $\frakn$ be its Lie algebra. Let $\calB:=G/B$ be the flag variety of $G$. Consider the Springer variety $\wcalN$ and the Grothendieck resolution $\wfrakg$ defined as follows:
\[
\wcalN := \{(X,gB) \in \frakg^* \times \calB \mid X_{|g \cdot \frakb}=0 \}, \qquad \wfrakg := \{(X,gB) \in \frakg^* \times \calB \mid X_{|g \cdot \frakn}=0\}.
\]
(The variety $\wcalN$ is naturally isomorphic to the cotangent bundle of $\calB$.) The varieties $\wcalN$ and $\wfrakg$ are subbundles of the trivial vector bundle $\frakg^* \times \calB$ over $\calB$. In particular, there are natural maps $\wcalN \to \frakg^*$ and $\wfrakg \to \frakg^*$. Let us consider the varieties
\[
Z:=\wcalN \times_{\frakg^*} \wcalN, \qquad \mathcal{Z}:=\wfrakg \times_{\frakg^*} \wfrakg.
\]
There is a natural action of $G \times \Gm$ on $\frakg^* \times \calB$, where $(g,t)$ acts via:
\[
(g,t) \cdot (X,hB) \ := \ (t^{-2} (g \cdot X), ghB).
\]
The subbundles $\wcalN$ and $\wfrakg$ are $G \times \Gm$-stable. For any variety $X \to \calB$ over $\calB$ and for $x \in \bbX$, we denote by $\calO_X(x)$ the pullback to $X$ of the line bundle on $\calB$ associated with $x$. We use a similar notation for varieties over $\calB \times \calB$ (and pairs of weights).

Let $R$ be the root system of $G$, $R^+$ the positive roots (chosen as the weights of $\frakg / \frakb$), $S \subset R^+$ the associated set of simple roots, $\bbX$ the weights of $R$ (which naturally identify with the group of characters of $T$). Let also $W$ be the Weyl group of $R$ (or of $(G,T)$). For $\alpha \in S$ we denote by $s_{\alpha} \in W$ the corresponding simple reflection. For $\alpha,\beta \in S$, we let $n_{\alpha,\beta}$ be the order of $s_{\alpha} s_{\beta}$ in $W$. Then the (extended) affine Hecke algebra $\calH_{\aff}$ associated with these data is the $\mathbb{Z}[v,v^{-1}]$-algebra generated by elements $\{T_{\alpha}, \, \alpha \in S\} \cup \{\theta_x, \, x \in \bbX\}$, with defining relations
\begin{align*}
\rmi & \quad T_{\alpha} T_{\beta} \cdots = T_{\beta} T_{\alpha} \cdots \quad (n_{\alpha,\beta} \ \text{elements on each side}) \\
\rmii & \quad \theta_0=1 \\
\rmiii & \quad \theta_x \theta_y = \theta_{x+y} \\
\rmiv & \quad T_{\alpha} \theta_x = \theta_x T_{\alpha} \quad \text{if} \ s_{\alpha}(x)=x \\
\rmv & \quad \theta_x=T_{\alpha} \theta_{x - \alpha} T_{\alpha} \quad \text{if} \ s_{\alpha}(x)=x-\alpha \\
\rmvi & \quad (T_{\alpha} + v^{-1})(T_{\alpha} - v)=0
\end{align*}
for $\alpha,\beta \in S$ and $x,y \in \bbX$ (see \emph{e.g.}~\cite{CG, L}).

We will be interested in the \emph{Iwahori--Matsumoto involution} $\mathrm{IM}$ of $\calH_{\aff}$. This is the involution of $\mathbb{Z}[v,v^{-1}]$-algebra of $\calH_{\aff}$ defined on the generators by
\[
\left\{
\begin{array}{ccc} \mathrm{IM}(T_{\alpha}) & = & - T_{\alpha}^{-1}, \\
\mathrm{IM}(\theta_x) & = & \theta_{-x}. \end{array}
\right.
\]
For $\alpha \in S$ we also consider $t_{\alpha}:=v \cdot T_{\alpha}$. Then we have $\mathrm{IM}(t_{\alpha})=-q(t_{\alpha})^{-1}$, with $q=v^2$.

Let $\alpha \in S$. Let $P_{\alpha} \supset B$ be the minimal standard parabolic subgroup associated with $\alpha$, let $\frakp_{\alpha}$ be its Lie algebra, and let $\calP_{\alpha}:=G/P_{\alpha}$ be the associated partial flag variety. We define the following $G \times \Gm$-subvariety of $Z$:
\[
Y_{\alpha}:=\{(X,g_1 B, g_2 B) \in \frakg^* \times (\calB \times_{\calP_{\alpha}} \calB) \mid X_{|g \cdot \frakp_{\alpha}}=0 \}.
\]
We also let
\[
\wfrakg_{\alpha} := \{ (X,gP_{\alpha}) \in \frakg^* \times \calP_{\alpha} \mid X_{|g \cdot \frakp_{\alpha}^{\mathrm{n}}}=0 \},
\]
where $\frakp_{\alpha}^{\mathrm{n}}$ is the nilpotent radical of $\frakp_{\alpha}$. There is a natural morphism $\wfrakg \to \wfrakg_{\alpha}$.

By a result of Kazhdan--Lusztig (\cite{KL}; see also \cite{CG,L}) there is a natural isomorphism of $\mathbb{Z}[v,v^{-1}]$-algebras
\begin{equation}
\label{eq:isomHaff}
\calH_{\aff} \ \xrightarrow{\sim} \Kth^{G \times \Gm}(Z),
\end{equation}
where the equivariant $\Kth$-theory $\Kth^{G \times \Gm}(Z)$ is endowed with the convolution product associated with the embedding $Z \subset \wcalN \times \wcalN$.
Isomorphism~\eqref{eq:isomHaff} can be defined by
\[
\left\{
\begin{array}{ccc} T_{\alpha} & \mapsto & -
    v^{-1} [\calO_{Y_{\alpha}}(-\rho, \rho-\alpha)] - v^{-1} [\Delta_* \calO_{\wcalN}] \\[2pt]
    \theta_x & \mapsto & [\Delta_* \calO_{\wcalN}(x)]
  \end{array}
\right.
\]
for $\alpha \in S$ and $x \in \bbX$. Here, $\Delta: \wcalN \hookrightarrow Z$ is the diagonal embedding, and for $\calF$ in $\Coh^{G \times \Gm}(Z)$ we denote by $[\calF]$ its class in $\Kth$-theory. The action of $v$ is induced by the functor $\langle 1 \rangle : \Coh^{G \times \Gm}(Z) \to \Coh^{G \times \Gm}(Z)$ of tensoring with the one-dimensional tautological $\Gm$-module.

Consider the embedding of smooth varieties $i : \wcalN \times \wfrakg \hookrightarrow \wfrakg \times \wfrakg$. Associated with this morphism, there is a morphism of ``restriction with supports'' in $\Kth$-theory
\[
\eta : \Kth^{G \times \Gm}(\mathcal{Z}) \to \Kth^{G \times \Gm}(Z)
\]
(see \cite[p.~246]{CG}). As above for $\Kth^{G \times \Gm}(Z)$, convolution endows $\Kth^{G \times \Gm}(\mathcal{Z})$ with the structure of a $\mathbb{Z}[v,v^{-1}]$-algebra. (Here we use the embedding $\mathcal{Z} \subset \wfrakg \times \wfrakg$ to define the product.) The following result is well known. As we could not find a reference, we include a proof.

\begin{lem} \label{lem:isomKth}
The morphism $\eta$ is an isomorphism of $\mathbb{Z}[v,v^{-1}]$-algebras.
\end{lem}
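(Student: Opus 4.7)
The plan is first to make sense of $\eta$ as a refined pullback (verifying the target really is $\Kth^{G \times \Gm}(Z)$), then to prove bijectivity by a Bruhat-stratification argument combined with the Thom isomorphism, and finally to obtain the ring homomorphism property from standard base-change compatibilities.

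As a preliminary I will check the set-theoretic identity $\mathcal{Z} \cap (\wcalN \times \wfrakg) = Z$ inside $\wfrakg \times \wfrakg$. For a point $(X, g_1 B, g_2 B)$ in the intersection, the conditions $X|_{g_1 \frakb} = 0$ and $X|_{g_2 \frakn} = 0$ translate under the Killing form identification $\frakg^* \cong \frakg$ to $X \in g_1 \frakn$ and $X \in g_2 \frakb$. Since $X$ is thereby ad-nilpotent and lies in the Borel subalgebra $g_2 \frakb$, the standard fact that the ad-nilpotent elements of a Borel subalgebra form its nilpotent radical forces $X \in g_2 \frakn$, placing the point in $Z$. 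This ensures that the refined pullback $Li^*$ for the codimension-$\dim \frakt$ regular embedding $i : \wcalN \times \wfrakg \hookrightarrow \wfrakg \times \wfrakg$, applied to classes supported on $\mathcal{Z}$, takes values in $\Kth^{G \times \Gm}(Z)$; this is $\eta$.

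The main step, and the step I expect to be the main obstacle, is bijectivity of $\eta$. My approach will be the Bruhat stratification. For $w \in W$ let $O_w \subset \calB \times \calB$ be the associated $G$-orbit, $H_w := B \cap wBw^{-1}$, and $\frakh_w := \frakb \cap w\frakb = \frakt \oplus (\frakn \cap w\frakn)$. A direct fiber computation then identifies the preimages $\mathcal{Z}_w^\circ$ and $Z_w^\circ$ of $O_w$ in $\mathcal{Z}$ and $Z$ with the $G \times \Gm$-equivariant vector bundles $G \times^{H_w} \frakh_w$ and $G \times^{H_w} (\frakn \cap w\frakn)$ over $O_w \cong G/H_w$, with $Z_w^\circ \hookrightarrow \mathcal{Z}_w^\circ$ a subbundle whose normal bundle is trivial of rank $\dim \frakt$. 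Taking Bruhat closures yields $G \times \Gm$-stable filtrations $(\mathcal{Z}_{\leq w})$, $(Z_{\leq w})$ preserved by $\eta$, and the excision long exact sequences together with the five-lemma (using finiteness of $W$) reduce the claim to showing that each graded map $\eta_w : \Kth^{G \times \Gm}(\mathcal{Z}_w^\circ) \to \Kth^{G \times \Gm}(Z_w^\circ)$ is an isomorphism. Both sides are identified with $\Kth^{G \times \Gm}(O_w)$ (a free module of rank one over $\mathbb{Z}[v,v^{-1}] \otimes R(T)$) by the Thom isomorphism via pullback along the bundle projections, noting that the representation ring of $H_w$ coincides with that of $T$ since $H_w/T$ is unipotent; a small check using $Lj_w^* \circ p^* = p_Z^*$ for the sub-bundle inclusion $j_w : Z_w^\circ \hookrightarrow \mathcal{Z}_w^\circ$ shows that $\eta_w$ corresponds to the identity under these identifications.

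For the ring homomorphism property I will appeal to the compatibility of the refined Gysin $Li^*$ with the push-pull operations used to define convolution. Convolution on $\Kth^{G \times \Gm}(\mathcal{Z})$ is built from the composition $(p_{1,3})_* \bigl( (p_{1,2})^*(-) \otimes (p_{2,3})^*(-) \bigr)$ along the projections $p_{i,j} : \wfrakg^3 \to \wfrakg^2$, and analogously for $\Kth^{G \times \Gm}(Z)$ with $\wcalN$ in place of $\wfrakg$. All the squares that appear when one inserts the regular embeddings $\wcalN^a \times \wfrakg^{3-a} \hookrightarrow \wfrakg^3$ are Tor-independent, for the $p_{i,j}$ are flat morphisms of smooth varieties and a dimension count (analogous to the one in the first paragraph) shows that the relevant subvarieties meet the embeddings in the expected dimension, so smoothness of the ambient spaces gives Tor-independence. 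Standard base change for $Li^*$ with flat pullback, proper pushforward, and tensor product then yields $\eta(\calM_1 \star \calM_2) = \eta(\calM_1) \star \eta(\calM_2)$, while $\Gm$-equivariance gives compatibility with the $\mathbb{Z}[v,v^{-1}]$-module structure.
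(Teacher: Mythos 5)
Your proposal is correct in outline, and it splits naturally into two halves that compare differently with the paper's proof. For bijectivity you are essentially reconstructing the paper's argument: the paper simply invokes the compatible cellular fibrations of $Z$ and $\mathcal{Z}$ over the $G$-orbits of $\calB \times \calB$ and cites \cite[\S\S 5.5, 6.2]{CG}, which is exactly your Bruhat stratification with the identifications $\mathcal{Z}_w^\circ \cong G \times^{H_w} (\frakb \cap w\frakb)$, $Z_w^\circ \cong G \times^{H_w}(\frakn \cap w\frakn)$, the Thom isomorphism, and the five-lemma induction; your preliminary set-theoretic check $\mathcal{Z} \cap (\wcalN \times \wfrakg) = Z$ (via ad-nilpotency) and the rank-$\dim\frakt$ trivial normal bundle are the transversality facts the paper leaves implicit. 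For the ring-homomorphism property your route is genuinely different. The paper observes that $i_*$ is left convolution with the kernel $\calO_{\Gamma_k}$ and $Lj^*$ is right convolution with the same kernel (by \cite[Lemma 1.2.3]{R1}), so that $\eta(a)$ is characterized by $[\calO_{\Gamma_k}] \star \eta(a) = a \star [\calO_{\Gamma_k}]$ and multiplicativity drops out of associativity of convolution; this avoids all base-change bookkeeping. Your approach via compatibility of the refined pullback with the push--pull operations also works, but one caveat: the justification ``expected dimension plus smoothness of the ambient space gives Tor-independence'' is not right as stated (proper intersection needs Cohen--Macaulayness of the subvarieties, and in any case the convolution involves arbitrary complexes supported on $\mathcal{Z}$, not just structure sheaves). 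Fortunately the squares you actually need are of the form ``closed embedding against flat projection,'' where flat base change holds unconditionally, and the commutation of $Li^*$ with the other pullbacks and with $\otimes^L$ is formal; the only place a genuine transversality input enters is in identifying $Li^* \circ i_*$ with the identity on the relevant $\Kth$-classes, which your stratum-by-stratum normal bundle computation already supplies. So the argument goes through, but the paper's kernel trick buys a much shorter verification, while your version has the merit of making all the geometric inputs explicit.
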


\begin{proof}
Let us denote by $j : \wcalN \times \wcalN \hookrightarrow \wcalN \times \wfrakg$ and $k : \wcalN \hookrightarrow \wfrakg$ the embeddings. Let also $\Gamma_k$ be the graph of $k$. Then $\eta$ is the composition of the morphism in $\Kth$-theory induced by the functor
\[
Lj^* : \calD^b \Coh_{\mathcal{Z}}^{G \times \Gm}(\wfrakg \times \wfrakg) \to \calD^b \Coh_Z^{G \times \Gm}(\wcalN \times \wfrakg)
\]
and by the inverse of the isomorphism induced by
\[
i_* : \calD^b \Coh_Z^{G \times \Gm}(\wcalN \times \wcalN) \to \calD^b \Coh_Z^{G \times \Gm}(\wcalN \times \wfrakg).
\]
By \cite[Lemma 1.2.3]{R1}, $i_*$ is the product on the left (for convolution) by the kernel $\calO_{\Gamma_k} \in \calD^b \Coh^{G \times \Gm}(\wcalN \times \wfrakg)$. By similar arguments, $Lj^*$ is the product on the right by the kernel $\calO_{\Gamma_k}$. It follows from these observations that $\eta$ is a morphism of $\mathbb{Z}[v,v^{-1}]$-algebras.

Then we observe that $Z$ and $\mathcal{Z}$ have compatible cellular fibrations (in the sense of \cite[\S 5.5]{CG}), defined using the partition of $\calB \times \calB$ into $G$-orbits. The stratas in $Z$ are the transverse intersections of those of $\mathcal{Z}$ with $\wcalN \times \wfrakg \subset \wfrakg \times \wfrakg$. It follows, using the arguments of \cite[\S 6.2]{CG}, that $\eta$ is an isomorphism of $\mathbb{Z}[v,v^{-1}]$-modules, completing the proof.\end{proof}

It follows in particular from this lemma that there is also an isomorphism
\begin{equation}
\label{eq:Haff1}
\calH_{\aff} \ \xrightarrow{\sim} \ \Kth^{G \times \Gm}(\calZ),
\end{equation}
which satisfies
\[
\left\{
\begin{array}{ccc} T_{\alpha} & \mapsto & -
    v^{-1} [\calO_{\wfrakg \times_{\wfrakg_{\alpha}} \wfrakg}] + v [\Delta_* \calO_{\wfrakg}] \\[2pt]
    \theta_x & \mapsto & [\Delta_* \calO_{\wfrakg}(x)]
  \end{array}
\right.
\]
(see \emph{e.g.}~\cite{BR, R1} for details). This is not exactly the isomorphism we are going to use. Instead, observe that the tensor product with the line bundle $\calO_{\wfrakg \times \wfrakg}(-\rho,\rho)$ induces an algebra automorphism of $\Kth^{G \times \Gm}(\calZ)$. Hence there exists an isomorphism
\begin{equation}
\label{eq:Haff2}
\calH_{\aff} \ \xrightarrow{\sim} \ \Kth^{G \times \Gm}(\calZ),
\end{equation}
which satisfies
\[
\left\{
\begin{array}{ccc} T_{\alpha} & \mapsto & -
    v^{-1} [\calO_{\wfrakg \times_{\wfrakg_{\alpha}} \wfrakg}(-\rho,\rho)] + v [\Delta_* \calO_{\wfrakg}] \\[2pt]
    \theta_x & \mapsto & [\Delta_* \calO_{\wfrakg}(x)]
  \end{array}
\right. .
\]
We will rather use the latter isomorphism.

Finally, we define $N:=\#(R^+)=\dim(\calB)$.

\subsection{Geometric realization of the Iwahori--Matsumoto involution}
\label{ss:LKDIM}

From now on we consider a very special case of linear Koszul
duality, namely the situation of Section~\ref{sec:convolution} with $X=\calB$,
$V=\frakg^*$ and $F=\wcalN$. We have $\calF=\calT_\calB^\vee$, where $\calT_\calB$ is the tangent sheaf on $\calB$. We identify $V^*=\frakg$ with $\frakg^*$
using the Killing form. Then $F^{\bot}$ identifies with $\wfrakg$. We
obtain an equivalence
\[
\frakK : \calD^{\mathrm{c}}_{G \times \Gm}
\bigl( (\Delta \frakg^* \times \calB \times \calB) \, \rcap_{(\frakg^*
  \times \calB)^2} \, (\wcalN \times \wcalN) \bigr) \ \xrightarrow{\sim} \
\calD^{\mathrm{c}}_{G \times \Gm} \bigl( (\Delta \frakg^* \times \calB
\times \calB) \, \rcap_{(\frakg^* \times \calB)^2} \, (\wfrakg \times
\wfrakg) \bigr)^\op.
\]
Here the actions of $\Gm$ on
$\frakg^*$ are not the same on the two sides: they are ``inverse'', \emph{i.e.}~each one is
the composition of the other
one with $t \mapsto t^{-1}$. We denote by $\frakK_{\mathrm{IM}}$ the composition
of $\frakK$ with the auto-equivalence of $\calD^{\mathrm{c}}_{G \times \Gm}
\bigl( (\Delta \frakg^* \times \calB \times \calB) \, \rcap_{(\frakg^*
  \times \calB)^2} \, (\wfrakg \times \wfrakg) \bigr)$ which inverts
the $\Gm$-action. (In the realization as $\Gm$-equivariant dg-modules on $\calB \times \calB$, this
amounts to inverting the internal grading.)

By Lemma~\ref{lem:Grothendieckgroups} (together with the fact that the direct image morphism $\Kth^{G \times \Gm}(Z) \to \Kth^{G \times \Gm}(\wcalN \times_{\frakg^*} \wcalN)$ is an isomorphism), the Grothendieck group of the
category $\calD^{\mathrm{c}}_{G \times \Gm} \bigl(
(\Delta \frakg^* \times \calB \times \calB) \, \rcap_{(\frakg^* \times
  \calB)^2} \, (\wcalN \times \wcalN) \bigr)$ is naturally isomorphic
to $\Kth^{G \times \Gm}(Z)$, which itself identifies with the affine Hecke algebra
$\calH_{\aff}$ (see~\eqref{eq:isomHaff}). Similarly, the Grothendieck group
of the category $\calD^{\mathrm{c}}_{G \times \Gm} \bigl( (\Delta
\frakg^* \times \calB \times \calB) \, \rcap_{(\frakg^* \times
  \calB)^2} \, (\wfrakg \times \wfrakg) \bigr)$ is isomorphic to $\Kth^{G
  \times \Gm}(\calZ)$; hence it is also isomorphic to $\calH_{\aff}$ (using isomorphism~\eqref{eq:Haff2}). One can easily check that the convolution product on derived categories of dg-sheaves defined in Section~\ref{sec:convolution} induces the convolution product in $\Kth$-theory considered in \S\ref{ss:reminderHecke}, so that these isomorphisms are algebra isomorphisms. Let us consider the automorphism $[\frakK_{\mathrm{IM}}] : \calH_{\aff} \to \calH_{\aff}$ induced by $\frakK_{\mathrm{IM}}$.
  
\begin{remark}
\label{rk:calZ-derived}
\begin{enumerate}
\item
A simple dimension
  counting as in the proof of Proposition~\ref{prop:imageYalpha} below shows that the derived intersection $(\Delta \frakg^*
  \times \calB \times \calB) \, \rcap_{(\frakg^* \times \calB)^2} \,
  (\wfrakg \times \wfrakg)$ is quasi-isomorphic, as a dg-scheme, to
  $\calZ$. Hence we do not really need Lemma~\ref{lem:Grothendieckgroups} to identify the Grothendieck group of $\calD^{\mathrm{c}}_{G \times \Gm} \bigl( (\Delta
\frakg^* \times \calB \times \calB) \, \rcap_{(\frakg^* \times
  \calB)^2} \, (\wfrakg \times \wfrakg) \bigr)$ with $\Kth^{G \times \Gm}(\calZ)$. In fact, it follows from \cite[Proposition 1.3.2]{MR} that there exists a natural equivalence of triangulated categories
\[
\calD^{\mathrm{c}}_{G \times \Gm} \bigl( (\Delta
\frakg^* \times \calB \times \calB) \, \rcap_{(\frakg^* \times
  \calB)^2} \, (\wfrakg \times \wfrakg) \bigr) \ \cong \
  \calD^b \Coh^{G \times \Gm}(\calZ).
\]
The similar statement for $\wcalN$ does \emph{not} hold, however.
\item
We have $\omega_{\calB}=\calO_{\calB}(-2\rho)$; in particular, this sheaf has a $G$-equivariant square root. Using Remark~\ref{rk:squareroot}, we could have used a more symmetric equivalence $\frakK'$ instead of $\frakK$. Theorem~\ref{thm:mainthm} below remains true if we replace $\frakK$ by $\frakK'$ and isomorphism~\eqref{eq:Haff2} by isomorphism~\eqref{eq:Haff1}.
\end{enumerate}
\end{remark}

In the presentation of $\calH_{\aff}$ using the generators $t_{\alpha}$ and $\theta_x$, the scalars appearing in the relations are polynomials in $q=v^2$. Hence we can define the involution $\iota$ of
$\calH_{\aff}$ (as an algebra) that fixes the $t_{\alpha}$'s and $\theta_x$'s, and sends
$v$ to $-v$. Note that we have $\iota \circ \mathrm{IM} = \mathrm{IM} \circ \iota$.

The main result of this section is the following.

\begin{thm}
\label{thm:mainthm}
The automorphism $[\frakK_{\mathrm{IM}}]$ of $\calH_{\aff}$ is the composition of the Iwahori--Matsumoto involution $\mathrm{IM}$ and the
involution $\iota$:
\[
[\frakK_{\mathrm{IM}}] \ = \ \iota \circ \mathrm{IM}
\]
as automorphisms of $\calH_{\aff}$.
\end{thm}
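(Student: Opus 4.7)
The plan is to check that $[\frakK_{\mathrm{IM}}]$ and $\iota \circ \mathrm{IM}$ are both unital algebra automorphisms of $\calH_{\aff}$ sending $v$ to $-v$, and then to verify their agreement on the $\mathbb{Z}[v,v^{-1}]$-algebra generators $\theta_x$ and $t_\alpha$. For the first property, multiplicativity of $[\frakK_{\mathrm{IM}}]$ is Proposition~\ref{prop:convolution}; unitality is Proposition~\ref{prop:diagonal}, since $\frakK(\calO_{\Delta\wcalN}) \cong \calO_{\Delta\wfrakg}$ and this $\Kth$-class lies in internal degree $0$ so is unchanged by the $\Gm$-inversion. For the behaviour on scalars, $v \in \calH_{\aff}$ is represented by $[\calO_{\Delta\wcalN}\langle 1\rangle]$: by Theorem~\ref{thm:lkd-equivariant}, $\frakK(\calO_{\Delta\wcalN}\langle 1\rangle) \cong \calO_{\Delta\wfrakg}[1]\langle -1\rangle$, and the final $\Gm$-inversion turns this into $\calO_{\Delta\wfrakg}[1]\langle 1\rangle$, whose $\Kth$-class is $-v$. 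Since $\iota \circ \mathrm{IM}$ enjoys the same properties, it suffices to check equality on generators.

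For $\theta_x$ this is immediate: Corollary~\ref{cor:imagelinebundle} gives $\frakK(\calO_{\Delta\wcalN}(x)) \cong \calO_{\Delta\wfrakg}(-x)$, and the $\Gm$-inversion leaves its $\Kth$-class invariant (again by the internal-degree argument). Under isomorphism~\eqref{eq:Haff2} this reads $[\frakK_{\mathrm{IM}}](\theta_x) = \theta_{-x} = \iota \circ \mathrm{IM}(\theta_x)$.

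The heart of the argument is the computation for $t_\alpha$. I would realize $Y_\alpha$ geometrically via the codimension-one subbundle inclusion $\widetilde{\frakp}_\alpha \subset \wcalN$, where $\widetilde{\frakp}_\alpha := \calB \times_{\calP_\alpha} T^*\calP_\alpha = \{(X,gB) : X|_{g \cdot \frakp_\alpha}=0\}$; under the Killing form identification the orthogonal inclusion is $\wfrakg \subset \widetilde{\frakp}_\alpha^\bot$, where $\widetilde{\frakp}_\alpha^\bot$ is the pullback of $\wfrakg_\alpha$ to $\calB$. Combining Proposition~\ref{prop:inclusion} with base change along $\pi_\alpha: \calB \to \calP_\alpha$ (Proposition~\ref{prop:basechange}), one transports the class $[\calO_{Y_\alpha}(-\rho,\rho-\alpha)]$ to the class $[\calO_{\wfrakg \times_{\wfrakg_\alpha}\wfrakg}(-\rho,\rho)]$ on the $\wfrakg$-side, twisted by a cohomological shift $[1]$ and an internal shift (reflecting the one-dimensional fibre of $\wcalN/\widetilde{\frakp}_\alpha$). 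Together with the formula $t_\alpha = -[\calO_{Y_\alpha}(-\rho,\rho-\alpha)] - [\Delta_*\calO_\wcalN]$ on the Springer side and the $\wfrakg$-side expression of $t_\alpha$, this yields $[\frakK_{\mathrm{IM}}](t_\alpha) = -t_\alpha + (q-1)$, which is exactly $\iota \circ \mathrm{IM}(t_\alpha) = -q\, t_\alpha^{-1}$.

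The hardest point is this last one: matching the asymmetric twist $\calO(-\rho,\rho-\alpha)$ on the Springer side against the symmetric $\calO(-\rho,\rho)$ twist on the Grothendieck side, and faithfully tracking the shifts produced by the base-change and subbundle-inclusion equivalences. Normalizing the dualizing complex using the square root $\omega_\calB^{1/2} = \calO_\calB(-\rho)$ (Remark~\ref{rk:squareroot}) should make the two sides as symmetric as possible, reducing the final check to a local computation along each $\mathbb{P}^1$-fibre of $\pi_\alpha$, which is essentially an $\mathrm{SL}_2$-calculation.
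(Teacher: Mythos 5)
Your overall architecture is exactly the paper's: use Proposition~\ref{prop:convolution} for multiplicativity, Proposition~\ref{prop:diagonal} for the unit, the shift identity $\frakK(\calM\langle m\rangle)\cong\frakK(\calM)[m]\langle -m\rangle$ for the action on $v$, Corollary~\ref{cor:imagelinebundle} for $\theta_x$, and then a single geometric computation for $t_\alpha$. All of that part is fine (one small caveat: your ``internal degree $0$'' justification for why the $\Gm$-inversion fixes the classes of $\calO_{\Delta\wfrakg}$ and $\calO_{\Delta\wfrakg}(-x)$ is not literally correct, since these dg-modules are not concentrated in internal degree $0$; the right statement is that their classes correspond under Lemma~\ref{lem:Grothendieckgroups} to honest $G\times\Gm$-equivariant sheaves whose $\Gm$-structure is canonical, hence preserved by inverting the action).

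The gap is in the $t_\alpha$ step, which is the only genuinely new computation needed and which you only assert. First, the geometric setup you propose does not typecheck: $\calO_{Y_\alpha}(-\rho,\rho-\alpha)$ is an object of the category attached to $\wcalN\times\wcalN$ over $\calB\times\calB$, whereas your subbundle inclusion $\widetilde{\frakp}_\alpha\subset\wcalN$ lives over a single copy of $\calB$ and so induces no functor on that category. The correct factorization (this is Proposition~\ref{prop:imageYalpha} in the paper) is: base change along $X_\alpha=\calB\times_{\calP_\alpha}\calB\hookrightarrow\calB\times\calB$ via Proposition~\ref{prop:basechange}, then the \emph{diagonal} subbundle inclusion $Y_\alpha\hookrightarrow\wcalN\times_{\calP_\alpha}\wcalN$ over $X_\alpha$ via Proposition~\ref{prop:inclusion}, followed by an explicit identification of $\kappa_Y(\mathrm{S}_{\calO_{X_\alpha}}(\calY_\alpha^\vee))$ and a dimension count showing the resulting derived intersection is concentrated in degree $0$ and equals $\wfrakg\times_{\wfrakg_\alpha}\wfrakg$. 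Second, your predicted answer is off: you expect ``a cohomological shift $[1]$ \emph{and} an internal shift,'' but the correct statement is $\frakK(\calO_{Y_\alpha}(\rho-\alpha,-\rho))\cong\calO_{\wfrakg\times_{\wfrakg_\alpha}\wfrakg}(-\rho,\rho)[1]$ with \emph{no} internal shift; any nonzero $\langle m\rangle$ would contribute a factor $(-v)^m$ and destroy the identity $[\frakK_{\mathrm{IM}}](t_\alpha)=-t_\alpha+q-1$. (Also note that the twist $-2\rho$ comes from the dualizing complex $\iota^!(\omega_\calB\boxtimes\calO_\calB[N])\cong\calO_{X_\alpha}(-2\rho,2\rho-\alpha)[1]$, not from a ``local $\SL_2$ computation along the $\mathbb{P}^1$-fibres''; and if you switch to the symmetric normalization $\omega_\calB^{-1/2}\boxtimes\omega_\calB^{-1/2}[N]$ of Remark~\ref{rk:squareroot}, you must simultaneously replace the isomorphism~\eqref{eq:Haff2} by~\eqref{eq:Haff1} on the $\wfrakg$ side, which you do not do.)
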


\begin{remark}
\label{rk:conjecture-bezru}
Bezrukavnikov has conjectured in \cite[\S11.4]{bezru} a 
strengthening of Theorem \ref{thm:mainthm}.
The main result of \cite{bezru}
is the identification of two categorifications
of affine Hecke algebras,
by coherent sheaves on the Steinberg variety and by
perverse sheaves on
the affine flag variety of the Langlands dual $\check G$ of $G$.
In \cite{BY}, Bezrukavnikov and Yun
categorify the Iwahori--Matsumoto involution
by the affine version of the Be{\u\i}linson--Ginzburg--Soergel
Koszul duality in category $\mathcal{O}$.
The conjecture identifies the two categorifications of
the involution, namely
a version of our linear Koszul duality of
coherent sheaves on the Steinberg variety
and the
affine 
Koszul duality for category $\mathcal{O}$.


To explain this conjecture more precisely we first replace the equivalence $\frakK$ by its variant $\frakK'$ as in Remark \ref{rk:calZ-derived}; we denote by $\frakK'_{\mathrm{IM}}$ the resulting equivalence. Using similar arguments as above one can construct a linear Koszul duality equivalence $\frakK''_{\mathrm{IM}}$ such that the following diagram commutes:
\[
\xymatrix@C=2cm{
\calD^{\mathrm{c}}_{G \times \Gm}
\bigl( (\Delta \frakg^* \times \calB \times \calB) \, \rcap_{(\frakg^*
  \times \calB)^2} \, (\wcalN \times \wfrakg) \bigr) \ar[r]^-{\frakK''_{\mathrm{IM}}} &
\calD^{\mathrm{c}}_{G \times \Gm} \bigl( (\Delta \frakg^* \times \calB
\times \calB) \, \rcap_{(\frakg^* \times \calB)^2} \, (\wfrakg \times
\wcalN) \bigr)^\op \\
\calD^{\mathrm{c}}_{G \times \Gm}
\bigl( (\Delta \frakg^* \times \calB \times \calB) \, \rcap_{(\frakg^*
  \times \calB)^2} \, (\wcalN \times \wcalN) \bigr) \ar[r]^{\frakK'_{\mathrm{IM}}} \ar[u]^-{(\cdot)_*} &
\calD^{\mathrm{c}}_{G \times \Gm} \bigl( (\Delta \frakg^* \times \calB
\times \calB) \, \rcap_{(\frakg^* \times \calB)^2} \, (\wfrakg \times
\wfrakg) \bigr)^\op. \ar[u]_-{(\cdot)^*}
}
\]
Here the left vertical arrow is a direct image functor and the right vertical arrow is an inverse image functor, both defined as in \S\ref{ss:subbundles}. The same constructions as in Section \ref{sec:convolution} can be used to endow the top left (resp.~right) category with a right action of the bottom left (resp.~right) category; with these definitions one has $\frakK''_{\mathrm{IM}}(\calF \star \calG) \cong \frakK''_{\mathrm{IM}}(\calF) \star \frakK'_{\mathrm{IM}}(\calG)$. Both vertical arrows induce isomorphisms in $\Kth$-theory. One can deduce from Theorem \ref{thm:mainthm} that the upper line in the diagram ``categorifies'' the composition $\iota \circ \mathrm{IM}$ as an automorphism of right modules $\calH_{\aff} \xrightarrow{\sim} (\calH_{\aff})^{\iota \circ \mathrm{IM}}$; here the superscript indicates a twist of the action.

Bezrukavnikov conjectures that, under a ``mixed version'' of equivalence $(3)$ in \cite{bezru}, the equivalence $\frakK''_{\mathrm{IM}}$ corresponds to the composition of the Koszul duality constructed in \cite[\S5.3]{BY} and Verdier duality.

Note that, using the same arguments as in Remark \ref{rk:calZ-derived}, one can check that the domain, resp.~codomain, of the equivalence $\frakK''_{\mathrm{IM}}$ is equivalent to $\calD^b \Coh^{G \times \Gm}(\wcalN \times_{\frakg^*} \wfrakg)$, resp.~to $\calD^b \Coh^{G \times \Gm}(\wfrakg \times_{\frakg^*} \wcalN)$. Hence this conjecture can be stated in terms not involving any derived intersection. 
However, this non-derived formulation is not symmetric in two factors
of fibered products so it does not provide 
a direct approach to convolution.
\end{remark}

We will prove Theorem \ref{thm:mainthm} in \S\ref{ss:endproof}. Before that, we need one more preliminary result.

Let $\alpha$ be a simple root. The coherent sheaf
$\calO_{Y_{\alpha}}(\rho - \alpha, -\rho)$ on $Z$ has a natural structure of $G \times \Gm$-equivariant dg-module over $\mathrm{S}( \calT_{\calB} \boxplus
\calT_{\calB} ) \otimes_\C (\bigwedge \frakg)$; hence it defines an object in the category $\calD^{\mathrm{c}}_{G
  \times \Gm} \bigl( (\Delta \frakg^* \times \calB \times \calB) \,
\rcap_{(\frakg^* \times \calB)^2} \, (\wcalN \times \wcalN)
\bigr)$. Similarly, $\calO_{\wfrakg \times_{\wfrakg_{\alpha}}
  \wfrakg}$ is naturally an object of $\calD^{\mathrm{c}}_{G \times
  \Gm} \bigl( (\Delta \frakg^* \times \calB \times \calB) \,
\rcap_{(\frakg^* \times \calB)^2} \, (\wfrakg \times \wfrakg) \bigr)$. The proof of the next proposition is very similar to that of Proposition~\ref{prop:diagonal}.

\begin{prop}
\label{prop:imageYalpha}
We have $\frakK(\calO_{Y_{\alpha}}(\rho - \alpha, -\rho)) \ \cong \ \calO_{\wfrakg \times_{\wfrakg_{\alpha}} \wfrakg}(-\rho,\rho)[1]$.
\end{prop}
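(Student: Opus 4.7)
I plan to mimic the proof of Proposition~\ref{prop:diagonal}, replacing the diagonal embedding $\Delta:\calB\hookrightarrow\calB^2$ by the partial-diagonal closed embedding $\pi:Y:=\calB\times_{\calP_\alpha}\calB\hookrightarrow\calB^2$. The key geometric observations are that $Y_\alpha$ is set-theoretically supported in $Y$, and that over $Y$ it becomes a $G$-equivariant subbundle of the base change $\wcalN\times_{\calB^2}Y$ which moreover sits inside $\Delta\frakg^*\times Y$. The first step is to apply Proposition~\ref{prop:basechange}(1) to the proper embedding $\pi$ to rewrite
\[
\frakK\bigl(\calO_{Y_\alpha}(\rho-\alpha,-\rho)\bigr) \ \cong \ \Xi\circ R\tilde\pi_*\circ\kappa_Y\bigl(\calO_{Y_\alpha}(\rho-\alpha,-\rho)\bigr),
\]
where $\kappa_Y$ is the equivariant linear Koszul duality equivalence on $Y$ relative to the pulled-back dualizing complex (which, using the symmetric convention of Remark~\ref{rk:squareroot}, is a twist of $\pi^!(\calO_{\calB^2}(\rho,\rho)[N])$).

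Since $Y_\alpha\subset\Delta\frakg^*\times Y$, the derived intersection $(\Delta\frakg^*\times Y)\rcap Y_\alpha$ equals $Y_\alpha$ with trivial differential. I would then apply Proposition~\ref{prop:inclusion} with $F_1=\Delta\frakg^*\times Y$, $F_2=Y_\alpha$, and $F_2'=\wcalN\times_{\calB^2}Y$, to exchange $\kappa_Y\circ R(f')_*$ for $L(g')^*\circ\kappa_{Y_\alpha}$. The explicit identification of $\kappa_{Y_\alpha}(\calO_{Y_\alpha})$ then proceeds exactly as in the last paragraph of the proof of Proposition~\ref{prop:diagonal}: one recognizes it as the structure dg-sheaf of the derived intersection $(\overline\Delta\frakg\times Y)\rcap Y_\alpha^{\bot}$ inside $\frakg^2\times Y$, which by a direct dimension count is concentrated in a single degree. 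After applying $L(g')^*$, $R\tilde\pi_*$ and $\Xi$, one obtains the structure sheaf of the derived intersection of $\Delta\frakg\times\calB^2$ with $\wfrakg\times\wfrakg$ cut further over $Y$ by the condition $Y_1+Y_2\in g\cdot\frakp_\alpha$; geometrically, this is precisely the defining condition for the fibre product $\wfrakg\times_{\wfrakg_\alpha}\wfrakg$, so the supporting subscheme of the answer is identified correctly.

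The shift $[1]$ in the statement appears via a dimension count on the dual side: the relevant derived intersection has expected codimension one larger than the actual codimension of $\wfrakg\times_{\wfrakg_\alpha}\wfrakg$, reflecting the failure of $\wfrakg\to\wfrakg_\alpha$ to be an isomorphism, so the structure sheaf acquires a single-degree shift. The main obstacle will be the bookkeeping of line-bundle twists and grading shifts: the dualizing complex $\pi^!(\omega_\calB^{1/2}\boxtimes\omega_\calB^{1/2}[N])$ acquires a twist by the determinant of the conormal bundle of $Y$ in $\calB^2$; each application of $\kappa$ shifts internal and cohomological gradings according to $\kappa(\calM[n]\langle m\rangle)=\kappa(\calM)[-n+m]\langle-m\rangle$; and the inclusion-of-subbundles step contributes an additional twist by the determinant of the quotient $(\wcalN\times_{\calB^2}Y)/Y_\alpha$. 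Checking that these contributions combine to produce exactly the twist $(-\rho,\rho)$ and the shift $[1]$, starting from the initial twist $(\rho-\alpha,-\rho)$, is the delicate part of the calculation.
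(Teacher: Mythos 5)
Your overall strategy is exactly the paper's: base change along the closed embedding $\iota : X_{\alpha}=\calB\times_{\calP_{\alpha}}\calB\hookrightarrow\calB\times\calB$ via Proposition~\ref{prop:basechange}, then the inclusion-of-subbundles step (Proposition~\ref{prop:inclusion}) applied to $Y_{\alpha}\subset\wcalN\times_{\calP_{\alpha}}\wcalN$ inside $(\frakg^*)^2\times X_{\alpha}$, using that $Y_{\alpha}\subset\Delta\frakg^*\times X_{\alpha}$ forces the differential on the relevant derived intersection to vanish, and finally an explicit computation plus a dimension count. So the skeleton is sound and matches the paper's proof of this proposition (which is itself modelled on Proposition~\ref{prop:diagonal}).

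However, your explanation of the shift $[1]$ is wrong, and this is the one place where the argument as you describe it would fail. You attribute the shift to an excess intersection on the dual side (``expected codimension one larger than the actual codimension''). In fact the dimension count there is exact: $\dim(\overline{\Delta}\frakg^*\times X_{\alpha})+\dim(\wfrakg\times_{\calP_{\alpha}}\wfrakg)-\dim(Y_{\alpha}^{\bot})=\dim(\wfrakg\times_{\wfrakg_{\alpha}}\wfrakg)=\dim\frakg$, so the derived intersection is quasi-isomorphic to the honest intersection $\wfrakg\times_{\wfrakg_{\alpha}}\wfrakg$ and contributes no shift at all. (Had there genuinely been one-dimensional excess, you would not obtain a clean shift of a sheaf but a complex with cohomology in two degrees.) The shift $[1]$ is already present before any derived intersection is taken on the dual side: it comes from the dualizing complex normalizing the Koszul duality on $X_{\alpha}$, namely $\iota^!(\omega_{\calB}\boxtimes\calO_{\calB}[N])\cong\calO_{X_{\alpha}}(-2\rho,2\rho-\alpha)[1]$, the $[1]$ reflecting that $\iota$ has codimension $N-1$ rather than $N$; the explicit computation of the resulting equivalence on the structure dg-sheaf then yields $\calO_{X_{\alpha}}(-2\rho,2\rho-\alpha)\otimes_{\C}\mathrm{S}(\frakg)[1]$, and this is where both the shift and the line-bundle twist enter. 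Relatedly, be careful with the normalization: the twists $(-\rho,\rho)$ in the statement are those for $\frakK$ built from $\omega_{\calB}\boxtimes\calO_{\calB}[N]$, not the symmetric square-root convention of Remark~\ref{rk:squareroot}; the paper handles the remaining bookkeeping by first replacing $\calO_{Y_{\alpha}}(\rho-\alpha,-\rho)$ by the isomorphic sheaf $\calO_{Y_{\alpha}}(-\rho,\rho-\alpha)$ and then tracking the single twist $(-2\rho,2\rho-\alpha)$ coming from $\iota^!$.
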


\begin{proof}
First we observe that $\calO_{Y_{\alpha}}(\rho - \alpha, -\rho) \cong \calO_{Y_{\alpha}}(-\rho, \rho-\alpha)$ (see \cite[Lemma 1.5.1]{R1}). Hence to prove the proposition it is sufficient to prove that $\frakK(\calO_{Y_{\alpha}}) \cong \calO_{\wfrakg \times_{\wfrakg_{\alpha}} \wfrakg}(-2\rho,2\rho - \alpha)[1]$.

Consider the inclusion $\iota : X_{\alpha} := \calB \times_{\calP_{\alpha}} \calB \hookrightarrow \calB \times \calB$. Applying the constructions of \S\ref{ss:base-change-lkd}, we obtain the diagram
\[
\xymatrix@C=2cm{
\calD^{\mathrm{c}}_{G \times \Gm} \bigl( (\Delta \frakg^* \times X_{\alpha}) \, \rcap_{(\frakg^*)^2 \times X_{\alpha}} \, (\wcalN \times_{\calP_{\alpha}} \wcalN) \bigr) \ar[r]^-{\kappa_{\alpha}}_-{\sim} \ar[d]^-{R\hat{\iota}_*} & \calD^{\mathrm{c}}_{G \times \Gm} \bigl( (\overline{\Delta} \frakg^* \times X_{\alpha}) \, \rcap_{(\frakg^*)^2 \times X_{\alpha}} \, (\wfrakg \times_{\calP_{\alpha}} \wfrakg) \bigr)^\op \ar[d]^-{R\tilde{\iota}_*} \\
\calD^{\mathrm{c}}_{G \times \Gm} \bigl( (\Delta \frakg^* \times \calB^2) \, \rcap_{(\frakg^* \times \calB)^2} \, (\wcalN \times \wcalN) \bigr) \ar[r]^-{\kappa}_-{\sim} & \calD^{\mathrm{c}}_{G \times \Gm} \bigl( (\overline{\Delta} \frakg^* \times \calB^2) \, \rcap_{(\frakg^* \times \calB)^2} \, (\wfrakg \times \wfrakg ) \bigr)^\op.
}
\]
Here $\kappa_\alpha$ is associated with the dualizing complex $\iota^!(\omega_\calB \boxtimes \calO_\calB[N]) \cong \calO_{X_\alpha}(-2\rho,2\rho-\alpha)[1]$.
By Proposition~\ref{prop:basechange} there is an isomorphism of functors
\[
\kappa \circ R\hat{\iota}_* \ \cong \ R\tilde{\iota}_* \circ \kappa_{\alpha}.
\]
In particular we obtain an isomorphism
\begin{equation}
\label{eqn:image-Talpha}
\frakK(\calO_{Y_{\alpha}}) \ \cong \ \Xi \circ R\tilde{\iota}_* \circ \kappa_{\alpha}(\calO_{Y_{\alpha}}).
\end{equation}
Here on the right hand side $\calO_{Y_{\alpha}}$ is considered as an object of $\calD^{\mathrm{c}}_{G \times \Gm}\bigl( (\Delta \frakg^* \times X_{\alpha}) \, \rcap_{(\frakg^*)^2 \times X_{\alpha}} \, (\wcalN \times_{\calP_{\alpha}} \wcalN) \bigr)$, with its natural structure of dg-module, and $\Xi$ is defined as in \S\ref{ss:defconvolution}.

Now $Y_{\alpha}$ is a (diagonal) subbundle of $\wcalN \times_{\calP_{\alpha}} \wcalN$. Taking the derived intersection with $\Delta \frakg^* \times X_{\alpha}$, we can apply the results of \S\ref{ss:subbundles}. Denoting by 
\begin{align*}
f : (\Delta \frakg^* \times X_{\alpha}) \, \rcap_{(\frakg^*)^2 \times X_{\alpha}} \, Y_{\alpha} \ & \to \ (\Delta \frakg^* \times X_{\alpha}) \, \rcap_{(\frakg^*)^2 \times X_{\alpha}} \, (\wcalN \times_{\calP_{\alpha}} \wcalN), \\
g : (\overline{\Delta} \frakg^* \times X_{\alpha}) \, \rcap_{(\frakg^*)^2 \times X_{\alpha}} \, (\wfrakg \times_{\calP_{\alpha}} \wfrakg) \ & \to \ (\overline{\Delta} \frakg^* \times X_{\alpha}) \, \rcap_{(\frakg^*)^2 \times X_{\alpha}} \, Y_{\alpha}^{\bot}
\end{align*}
the morphisms of dg-schemes induced by inclusions, we obtain a diagram
\[
\xymatrix@C=2cm{
\calD^{\mathrm{c}}_{G \times \Gm} \bigl( (\Delta \frakg^* \times X_{\alpha}) \, \rcap_{(\frakg^*)^2 \times X_{\alpha}} \, Y_{\alpha} \bigr) \ar[r]^-{\kappa_Y}_-{\sim} \ar[d]^-{Rf_*} & \calD^{\mathrm{c}}_{G \times \Gm} \bigl( (\overline{\Delta} \frakg^* \times X_{\alpha}) \, \rcap_{(\frakg^*)^2 \times X_{\alpha}} \, Y_{\alpha}^{\bot} \bigr)^\op \ar[d]^-{Lg^*} \\
\calD^{\mathrm{c}}_{G \times \Gm} \bigl( (\Delta \frakg^* \times X_{\alpha}) \, \rcap_{(\frakg^*)^2 \times X_{\alpha}} \, (\wcalN \times_{\calP_{\alpha}} \wcalN) \bigr) \ar[r]^-{\kappa_{\alpha}}_-{\sim} & \calD^{\mathrm{c}}_{G \times \Gm}\bigl( (\overline{\Delta} \frakg^* \times X_{\alpha}) \, \rcap_{(\frakg^*)^2 \times X_{\alpha}} \, (\wfrakg \times_{\calP_{\alpha}} \wfrakg) \bigr)^\op }
\]
where $\kappa_Y$ is again associated with the dualizing complex $\calO_{X_\alpha}(-2\rho,2\rho-\alpha)[1]$.
(Here, in the top right corner, $Y_{\alpha}^{\bot}$ is the orthogonal of $Y_{\alpha}$ as a subbundle of $(\frakg^*)^2 \times X_{\alpha}$.) Let $\calY_{\alpha}$ denote the sheaf of sections of $Y_{\alpha}$. The structure sheaf of $(\Delta \frakg^* \times X_{\alpha}) \, \rcap_{(\frakg^*)^2 \times X_{\alpha}} \, Y_{\alpha}$ is $(\bigwedge \frakg) \otimes_{\C} \mathrm{S}_{\calO_{X_{\alpha}}}(\calY_{\alpha}^{\vee})$, with trivial differential (because $Y_{\alpha} \subset \Delta \frakg^* \times X_{\alpha}$). In particular, $\mathrm{S}_{\calO_{X_{\alpha}}}(\calY_{\alpha}^{\vee})$ is naturally an object of the top left category, and $Rf_* (\mathrm{S}_{\calO_{X_{\alpha}}}(\calY_{\alpha}^{\vee}))$ is the object $\calO_{Y_{\alpha}}$ considered above. By Proposition~\ref{prop:inclusion} we have an isomorphism of functors
\[
\kappa_{\alpha} \circ Rf_* \ \cong \ Lg^* \circ \kappa_Y.
\]
In particular we obtain that $\kappa_{\alpha}(\calO_{Y_{\alpha}}) \cong Lg^* \circ \kappa_Y(\mathrm{S}_{\calO_{X_{\alpha}}}(\calY_{\alpha}^{\vee}))$.

Now the structure sheaf of $(\overline{\Delta} \frakg^* \times X_{\alpha})
\, \rcap_{(\frakg^*)^2 \times X_{\alpha}} \, Y_{\alpha}^{\bot}$ is
$(\bigwedge \calY_{\alpha}) \otimes_{\C}
\mathrm{S}(\frakg)$, with trivial differential. And direct computation
shows that
$\kappa_Y(\mathrm{S}_{\calO_{X_{\alpha}}}(\calY_{\alpha}^{\vee}))$
is isomorphic to the dg-module $\calO_{X_{\alpha}}(-2\rho,2\rho-\alpha) \otimes_{\C} \mathrm{S}(\frakg) [1]$. Let $\calF_\alpha$ be the sheaf of sections of  $\wcalN \times_{\calP_{\alpha}} \wcalN$ (i.e.~the restriction of $\calT_{\calB}^\vee \boxplus \calT_{\calB}^\vee$ to $X_\alpha$). Then $\calY_\alpha \subset \calF_\alpha$, and by the same arguments as at the end of the proof of Proposition \ref{prop:diagonal}, $\kappa_{\alpha}(\calO_{Y_{\alpha}})$ can be described as the tensor product of $\calO_{X_{\alpha}}(-2\rho,2\rho-\alpha)[1]$ with the dg-module
\begin{equation}
\label{eqn:Koszul}
\Sym_{\calO_{X_\alpha}} \bigl( \calF_\alpha / \calY_\alpha \to \frakg \otimes_{\C} \calO_{X_\alpha} \bigr).
\end{equation}
(Here, as usual the complex is concentrated in degrees $-1$ and $0$, we identify $\frakg$ with the quotient $(\frakg \oplus \frakg) / \Delta \frakg$, and the differential is the opposite of the natural map.) By the arguments of the proof of \cite[Lemma 4.1.1]{MR}, the latter dg-module is the direct image to $X_\alpha$ of the dg-algebra of functions on the derived intersection $(\overline{\Delta} \frakg^* \times
X_{\alpha}) \rcap_{Y_\alpha^\bot} (\wfrakg \times_{\calP_{\alpha}} \wfrakg)$. Now we observe that $(\overline{\Delta} \frakg^* \times
X_{\alpha}) \cap (\wfrakg \times_{\calP_{\alpha}} \wfrakg)$ is the image of
$\wfrakg \times_{\wfrakg_{\alpha}} \wfrakg$ under the automorphism induced by multiplication by $-1$ on the second copy of $\frakg^*$, and
\[
\dim(\overline{\Delta} \frakg^* \times X_{\alpha}) + \dim(\wfrakg \times_{\calP_{\alpha}} \wfrakg) - \dim(Y_{\alpha}^{\bot}) \ = \ \dim(\wfrakg \times_{\wfrakg_{\alpha}} \wfrakg) \quad (=\dim(\frakg)).
\]
Using \cite[(18.D) Theorem 43 and (16.B) Theorem 31]{matsu}, this implies that the dg-module \eqref{eqn:Koszul} is concentrated in degree $0$, and isomorphic to the sheaf of functions on $(\overline{\Delta} \frakg^* \times
X_{\alpha}) \cap (\wfrakg \times_{\calP_{\alpha}} \wfrakg)$. Using \eqref{eqn:image-Talpha}, this implies the isomorphism of the proposition.
\end{proof} 

\subsection{Proof of Theorem~\ref{thm:mainthm}} \label{ss:endproof}

By construction we have $\frakK(\calM \langle m \rangle) \cong
\frakK(\calM) [m] \langle -m \rangle$, hence \[ \frakK_{\mathrm{IM}}(\calM \langle
m \rangle) \ \cong \ \frakK_{\mathrm{IM}}(\calM) [m] \langle m \rangle. \] In
particular, for $a \in \calH_{\aff}$ and $f(v) \in \mathbb{Z}[v,v^{-1}]$ we have $[\frakK_{\mathrm{IM}}](f(v) \cdot a) =
f(-v) \cdot [\frakK_{\mathrm{IM}}](a)$.

By Proposition~\ref{prop:convolution}, the equivalence $\frakK_{\mathrm{IM}}$ is compatible with convolution, hence also the induced isomorphism $[\frakK_{\mathrm{IM}}]$. Also, by Proposition~\ref{prop:diagonal} it sends the unit to the unit. It follows that to prove Theorem~\ref{thm:mainthm} we only have to check that $[\frakK_{\mathrm{IM}}]$ and $\iota \circ \mathrm{IM}$ coincide on the generators $t_{\alpha}$ and $\theta_x$.

First, Corollary~\ref{cor:imagelinebundle} implies that
$[\frakK_{\mathrm{IM}}](\theta_x)=\theta_{-x}$. Similarly, Proposition~\ref{prop:imageYalpha} implies that we have
$[\frakK_{\mathrm{IM}}]([\calO_{Y_{\alpha}}(\rho-\alpha, -\rho)]) =
-[\calO_{\wfrakg \times_{\wfrakg_{\alpha}} \wfrakg}(-\rho,\rho)]$. Hence
\[
[\frakK_{\mathrm{IM}}](T_{\alpha}) \ = \ -v^{-1} [\calO_{\wfrakg \times_{\wfrakg_{\alpha}} \wfrakg}(-\rho,\rho)] + v^{-1} \ = \ T_{\alpha} - v + v^{-1}.
\]
Hence $[\frakK_{\mathrm{IM}}](t_{\alpha})=-t_{\alpha} + v^2 - 1= -q(t_{\alpha})^{-1}$. This finishes the proof of the theorem.

\end{document}